\newcommand\seq[1]{\langle #1 \rangle}
\newcommand\ceil[1]{\lceil #1 \rceil}
\newcommand{\conv}{\!\downarrow\;}
\DeclareMathOperator{\powerset}{\mathcal{P}}
\DeclareMathOperator{\dom}{dom}
\DeclareMathOperator{\uh}{\upharpoonright}
\DeclareMathOperator{\upperd}{\bar{\rho}}
\DeclareMathOperator{\lowerd}{\underline{\rho}}
\def\N{\mathbb{N}}
\def\Q{\mathbb{Q}}
\def\X{\mathcal{X}}
\def\F{\mathcal{F}}
\def\CC{\mathcal{C}}
\def\D{\mathcal{D}}
\def\Cantor{2^\omega}
\def\RT{\mathsf{RT}}
\def\DS{\mathsf{SD}}
\def\ACA{\mathsf{ACA}}
\def\RCA{\mathsf{RCA}}
\def\WKL{\mathsf{WKL}}
\DeclareMathOperator{\Var}{Var}
\DeclareMathOperator{\E}{E}
\let\bar\overline
\let\tilde\widetilde
\let\phi\varphi
\let\term\emph
\renewcommand{\epsilon}{\varepsilon}
\newcommand{\concat}{^\smallfrown}
\theoremstyle{plain}
\newtheorem{theorem}{Theorem}[section]
\newtheorem*{claim*}{Claim}
\newtheorem{proposition}[theorem]{Proposition}
\newtheorem*{proposition*}{Proposition}
\newtheorem*{fact*}{Fact}
\newtheorem{conjecture}[theorem]{Conjecture}
\newtheorem*{conjecture*}{Conjecture}
\newtheorem{corollary}[theorem]{Corollary}
\newtheorem{lemma}[theorem]{Lemma}
\newtheorem*{lemma*}{Lemma}
\newtheorem{question}[theorem]{Question}
\newtheorem*{question*}{Question}
\theoremstyle{definition}
\theoremstyle{definition}\newtheorem*{remark*}{Remark}
\theoremstyle{definition}\newtheorem{definition}[theorem]{Definition}
\theoremstyle{definition}
\theoremstyle{definition}
\theoremstyle{definition}\newtheorem*{example*}{Example}
\theoremstyle{definition}
\theoremstyle{definition}
\title{Coding information into all infinite subsets of a dense set}
\author{Matthew Harrison-Trainor}
\thanks{This work was supported by the National Science Foundation under Grant 	DMS-2153823 and Grant DMS-2203072.}
\author{Lu Liu}
\author{Patrick Lutz}
\begin{document}

\maketitle
\thispagestyle{empty}
\vspace{-20pt}
\begin{abstract}
Suppose you have an uncomputable set $X$ and you want to find a set $A$, all of whose infinite subsets compute $X$. There are several ways to do this, but all of them seem to produce a set $A$ which is fairly sparse. We show that this is necessary in the following technical sense: if $X$ is uncomputable and $A$ is a set of positive lower density then $A$ has an infinite subset which does not compute $X$. We also prove an analogous result for PA degree: if $X$ is uncomputable and $A$ is a set of positive lower density then $A$ has an infinite subset which is not of PA degree. We will show that these theorems are sharp in certain senses and also prove a quantitative version formulated in terms of Kolmogorov complexity. Our results use a modified version of Mathias forcing and build on work by Seetapun, Liu, and others on the reverse math of Ramsey's theorem for pairs.
\end{abstract}

\section{Introduction}

Suppose you have an uncomputable set $X$ and would like to find an infinite set $A \subseteq \N$ such that all infinite subsets of $A$ compute $X$. Here's one way to do this, due to Dekker and Myhill~\cite{dekker1958retraceable}: identify $\N$ with $2^{<\omega}$ and let $A$ be the set of all finite initial segments of $X$. 

This is not the only way to encode an uncomputable set $X$ into all infinite subsets of $A$. For example, if $X$ is hyperarithmetic then it can be computed from any sufficiently fast growing function (\cite{jockusch1969uniformly}, Theorem 6.8). If we make sure $A$ has sufficiently large gaps between its elements then for any infinite subset $B$ of $A$, the function which enumerates the elements of $B$ grows fast enough to compute $X$ and hence $B$ itself computes $X$.

Note that both of these methods produce fairly sparse subsets of $\N$. If we use Dekker and Myhill's method then the set $A$ will have just $n$ elements less than $2^n$. If we use the second method (in the case where $X$ is hyperarithmetic) then the set $A$ will be even sparser---the gaps between successive elements of $A$ grow faster than any computable function.

It seems reasonable to informally conjecture that this is a necessary feature of such coding methods; in other words, to conjecture that if $X$ is uncomputable and every infinite subset of $A$ computes $X$ then $A$ must be sparse. We can turn this informal conjecture into a formal one by picking a precise definition of ``sparse.''

The main theorem of this paper states that the conjecture holds if we define ``sparse'' to mean ``lower density zero.'' Recall that the \term{lower density} of a set of natural numbers $A \subseteq \N$ is
\[
  \lowerd(A) = \liminf_{n \to \infty} \frac{|A \cap [n]|}{n + 1}
\]
where $[n]$ denotes the set $\{0, 1, \ldots, n\}$. The main theorem of this paper is as follows.

\begin{restatable}{theorem}{main}
\label{thm:main}
For any uncomputable set $X$ and any set $A \subseteq \N$ of positive lower density, there is some infinite subset of $A$ which does not compute $X$.
\end{restatable}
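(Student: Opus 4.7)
The plan is to construct $B$ via a variant of Mathias forcing, in the spirit of Seetapun and Liu's cone-avoidance arguments for $\RT^2_2$. Conditions are pairs $(\sigma, R)$ with $\sigma \subseteq A$ a finite stem, $R \subseteq A$ an infinite reservoir, and $\max\sigma < \min R$; the reservoir $R$ is required to belong to a distinguished family $\mathcal{F}$ of ``large'' subsets of $A$, where largeness is a density condition capturing that $\lowerd(R)$ is comparable to $\lowerd(A)$ (or more generally an inheritable combinatorial largeness notion implying positive lower density). Extensions are as in ordinary Mathias forcing, with the added requirement that the shrunk reservoir remain in $\mathcal{F}$. A sufficiently generic filter then yields an infinite $B \subseteq A$.

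To ensure $\Phi_e^B \neq X$, given a condition $(\sigma, R)$, I would use the standard forcing question: does there exist $\tau \supseteq \sigma$ with $\tau \setminus \sigma \subseteq R$ and some $n$ such that $\Phi_e^\tau(n) \downarrow \neq X(n)$? If yes, extend to $(\tau, R')$ with $R' \subseteq R \cap (\max \tau, \infty)$ in $\mathcal{F}$, which forces disagreement with $X$ at $n$. If no, then every convergent computation $\Phi_e^\tau(n)$ agrees with $X(n)$; in this case, I would seek a subreservoir $R' \in \mathcal{F}$ and some $n$ for which no $\tau \subseteq R'$ makes $\Phi_e^\tau(n)$ converge, so that divergence at $n$ along the generic $B$ guarantees $\Phi_e^B \neq X$. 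These two cases together force the diagonalization requirement for $\Phi_e$, and iterating across all $e$ produces the desired $B$.

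The main obstacle is showing that the shrinking step in the negative case is always possible. If it were not, then for every large $R' \subseteq R$ in $\mathcal{F}$ and every $n$, some $\tau \subseteq R'$ would have $\Phi_e^\tau(n) \downarrow$, and combined with the uniform agreement with $X$, this would yield a computation of $X$ from $R'$---a contradiction provided $\mathcal{F}$ can be arranged to exclude sets computing $X$. Establishing the required closure and cone-avoidance properties of $\mathcal{F}$ is the technical heart of the argument. The positive lower density hypothesis on $A$ is exactly what provides enough ``room'' to split and pass to subreservoirs while preserving both largeness and non-computation of $X$: for instance, subadditivity of upper density (if $R = R_0 \sqcup R_1$ then $\upperd(R_0) + \upperd(R_1) \geq \lowerd(R) > 0$) guarantees that combinatorial splitting arguments analogous to Liu's preservation-of-non-computability basis theorem can be carried out without destroying density. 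Balancing density preservation against cone-avoidance across infinitely many diagonalization stages is the principal technical challenge, and is where a ``modified'' Mathias forcing---tailored to density rather than the standard largeness notions---becomes essential.
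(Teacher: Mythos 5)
Your broad framework---Mathias forcing with density-constrained reservoirs, a forcing question for each $\Phi_e$, and the diagnosis that the negative case is where the work lies---matches the paper. But the specific largeness notion you propose is precisely the one the paper proves \emph{cannot} work, and this is not a cosmetic issue but the central obstacle.

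You want $\mathcal{F}$ to be a density notion implying $\lowerd(R) > 0$, and you additionally want $\mathcal{F}$ ``arranged to exclude sets computing $X$.'' The paper shows in Section~\ref{sec:dense_subsets} (stated as a proposition at the start of Section~\ref{sec:main}) that there is a set $A$ of positive lower density, \emph{all} of whose subsets of positive lower density compute $X$ uniformly. For such an $A$, a family of subreservoirs that both have positive lower density and avoid computing $X$ is empty; even the initial condition $(\varnothing, A)$ is disallowed. Worse, the paper explicitly shows that if you require only positive lower density (without the cone-avoidance requirement), then for this $A$ the generic is \emph{forced} to compute $X$. So your ``negative case'' reduction---``if the shrinking step fails, $R'$ computes $X$, contradiction''---is not a contradiction at all: there is nothing absurd about every suitably dense subreservoir computing $X$, since that is exactly what happens for this $A$.

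The paper's resolution is to weaken the largeness notion in a precise way: a reservoir $B$ is acceptable if it is $\delta$-dense along some infinite set $D$ (i.e.\ has positive \emph{upper} density, witnessed by $D$) such that $D$---\emph{not} $B$ itself---fails to compute $X$. The reservoir is permitted to compute $X$; only the density witness must avoid the cone. This gives a class of reservoirs that is $\Pi^0_1$-describable and strictly broader than ``positive lower density,'' which is what makes the basis-theorem and Seetapun-theorem machinery applicable. Even with the right notion, the negative case does not reduce to finding a single subreservoir forcing divergence; instead one must iteratively construct pairwise-disjoint sets $B_0,\ldots,B_k$, each forcing $\Phi(C,n_i)\neq X(n_i)$ for some $n_i$ (possibly by converging to the wrong value rather than diverging), together with shrinking density witnesses $D_0\supseteq D_1\supseteq\cdots$ chosen via Seetapun's theorem, with termination guaranteed by a counting argument (at most $2/\delta$ disjoint $\delta/2$-dense sets fit below any point of $D_k$). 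The divergence you want is then obtained only at the last step, by intersecting $B$ with one of the $B_i$. None of this machinery is visible in your sketch, and it is where the proof actually lives.
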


Our proof of this theorem relies on a theorem implicit in the work of Seetapun~\cite{seetapun1995strength} and first proved explicitly by Dzhafarov and Jockusch~\cite{dzhafarov2009ramsey}.\footnote{This theorem is sometimes known by the name ``strong cone avoidance for $\RT^1_2$,'' which originates from its connection to the reverse math of Ramsey's theorem. We decided not to use that name here since we are not concerned with reverse math in this paper.} For a proof, see~\cite{hirschfeldt2015slicing}, Theorem 6.63.

\begin{restatable}[Seetapun's theorem]{theorem}{seetapun}
\label{thm:seetapun}
For any uncomputable set $X$ and any set $A \subseteq \N$, either $A$ or $\bar{A}$ must contain some infinite subset which does not compute $X$.
\end{restatable}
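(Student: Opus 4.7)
The plan is to use Mathias-style forcing to construct the desired infinite subset. The key twist needed for this ``strong'' cone avoidance statement---where $A$ itself may well compute $X$---is to work on both sides of the partition simultaneously, rather than committing in advance to building a subset of $A$ or of $\bar A$.

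Concretely, I would use conditions of the form $(F_0, F_1, I)$ where $F_0 \subseteq A$ and $F_1 \subseteq \bar A$ are finite, $I \subseteq \N$ is infinite, and $\max(F_0 \cup F_1) < \min I$. An extension adds elements of $I$ to $F_0$ or $F_1$ according to which side of the partition they lie on, and shrinks $I$. A sufficiently generic filter yields $G_0 \subseteq A$ and $G_1 \subseteq \bar A$ with at least one of them infinite, since every extension eats into $I$ on one side or the other. For each pair of Turing functionals $(\Phi_0, \Phi_1)$ I would then try to force $\Phi_0^{G_0} \neq X$ or $\Phi_1^{G_1} \neq X$; whichever of $G_0, G_1$ is infinite then cannot compute $X$.

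The heart of the argument is the associated density lemma: given $(F_0, F_1, I)$ and $(\Phi_0, \Phi_1)$, one can extend so as to force the above requirement. I would proceed by the standard dichotomy. First check whether some finite $\sigma \subseteq I$ produces a ``wrong'' convergent value: if there exist $n$, $v \neq X(n)$, and a finite $\sigma \subseteq I$ with $\Phi_0^{F_0 \cup (\sigma \cap A)}(n) \downarrow v$ or $\Phi_1^{F_1 \cup (\sigma \cap \bar A)}(n) \downarrow v$, fold the corresponding set into $F_0$ or $F_1$ and the requirement is forced. Otherwise, every convergent computation of the relevant form must already agree with $X$, so one can in principle read off $X(n)$ by a search over finite extensions.

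The main obstacle is precisely how to extract a contradiction from this ``otherwise'' case: the naive search only yields $X \leq_T I \oplus A$, which is not a contradiction since $A$ may already compute $X$. To close the loop, I would maintain a sufficiently effective invariant on the reservoir $I$ throughout the construction and iterate the dichotomy so as to trade the dependence on $A$ for a further shrinkage of $I$, eventually arriving at a computation of $X$ from computable parameters. Carrying this out is the combinatorial core of Seetapun's original argument; the modern presentations (e.g.\ in Hirschfeldt's \emph{Slicing the Truth}) package it as a single density lemma whose proof simultaneously builds a large ``tree of acceptable extensions'' and either extracts a forcing extension or else directly contradicts the uncomputability of $X$.
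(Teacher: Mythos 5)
The paper does not prove Seetapun's theorem at all; it cites it (Hirschfeldt, \emph{Slicing the Truth}, Theorem 6.63, following Dzhafarov--Jockusch), so there is no in-paper argument to compare yours against. Your scaffolding is the right one: pair Mathias conditions $(F_0, F_1, I)$, the disjunctive requirement ``$\Phi_0(G_0) \neq X$ or $\Phi_1(G_1) \neq X$,'' and the dichotomy between finding a wrong convergent value and being unable to. The problem is that your proposal stops exactly where the theorem has content. You correctly observe that the naive search in the ``otherwise'' case only gives $X \leq_T I \oplus A$, and then state that you ``would maintain a sufficiently effective invariant on the reservoir $I$\ldots and iterate the dichotomy so as to trade the dependence on $A$ for a further shrinkage of $I$, eventually arriving at a computation of $X$ from computable parameters.'' That sentence restates the goal; it does not supply a mechanism. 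As written, the proposal reduces Seetapun's theorem to Seetapun's theorem.

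The missing mechanism is a specific combinatorial idea, sometimes called the Seetapun trick. In the ``otherwise'' case one must \emph{not} partition a candidate block $\sigma \subseteq I$ along $A$ (that is how $A$ sneaks into the computation); instead one quantifies over \emph{all} $2$-partitions of $\sigma$. Form the set $W$ of pairs $(n,v)$ for which some finite $\sigma \subseteq I$ satisfies: for \emph{every} partition $\sigma = \sigma_0 \sqcup \sigma_1$, either $\Phi_0(F_0 \cup \sigma_0, n)\conv = v$ or $\Phi_1(F_1 \cup \sigma_1, n)\conv = v$. This $W$ is c.e.\ in $I$ alone---$A$ has vanished. If some $(n,v) \in W$ with $v \neq X(n)$, specializing the witnessing $\sigma$ to the $A$-partition gives a forcing extension on one side. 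If for some $n$ there is no $(n,v) \in W$, compactness yields a nonempty $\Pi^0_1(I)$ class of $2$-covers of $I$ on which $\Phi_0$, $\Phi_1$ diverge at $n$, and the cone-avoiding basis theorem produces a new reservoir (still not computing $X$) forcing divergence on one side. If neither happens, searching $W$ computes $X$ from $I$, contradicting the invariant $I \not\geq_T X$. Without the all-partitions idea and the basis-theorem step, there is no proof. A secondary gap: ``whichever of $G_0, G_1$ is infinite then cannot compute $X$'' does not follow from what you forced, since the finite $G_i$ trivially satisfies the disjunct. The standard fix is to note that if ever $I \cap A$ (say) becomes finite, then $I \cap \bar A$ is cofinite in $I$, hence Turing-equivalent to $I$ and thus already an infinite subset of $\bar A$ not computing $X$; so one may assume both $I \cap A$ and $I \cap \bar A$ stay infinite and force both $G_0$, $G_1$ infinite.
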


Our proof is partially inspired by proofs of results related to Seetapun's theorem by Cholak, Jockusch and Slaman~\cite{cholak2001strength}, Dzhafarov and Jockusch~\cite{dzhafarov2009ramsey} and Monin and Patey~\cite{monin2019pigeons}. More specifically, those results are proved using variations on Mathias forcing and use the low basis theorem (or the cone avoiding basis theorem) to show that certain sets of conditions are dense. Our proof also uses a variation on Mathias forcing and the cone avoiding basis theorem, but, in addition, uses Seetapun's theorem in a manner similar to the cone avoiding basis theorem. We will explain our strategy more carefully in Section~\ref{sec:main}.

It is natural to ask whether Theorem~\ref{thm:main} holds for stronger notions of sparsity. In Section~\ref{sec:sharp}, we will show that our theorem is sharp in the sense that it fails to hold for several such notions.

We will also prove two other results which show that it is difficult to encode information into all infinite subsets of a dense set. These two results are stated in terms of having PA degree and in terms of Kolmogorov complexity, respectively.

\subsection*{Avoiding PA degree}

Theorem~\ref{thm:main} can be rephrased in terms of the property of \term{cone avoidance}. Say that a set $\CC \subseteq \Cantor$ \term{avoids cones} if for every nontrivial cone of Turing degrees, there is some element of $\CC$ which is not in that cone. Then Theorem~\ref{thm:main} can be restated as: for every set $A \subseteq \N$ of positive lower density, the set of infinite subsets of $A$ avoids cones.

Cone avoidance can be seen as a kind of computability-theoretic weakness. Another standard weakness notion is \term{PA avoidance}. Say that a set $\CC \subseteq \Cantor$ \term{avoids PA degree} if there is some element of $\CC$ which is not of PA degree. Though cone avoidance and PA avoidance are not equivalent, they do occur together relatively frequently.

One example of this comes from research on $\RT^2_2$, a statement of Ramsey theory which has been thoroughly studied in the field of Reverse Mathematics. Seetapun showed that over $\RCA_0$, $\RT^2_2$ does not imply $\ACA_0$. A core part of the proof is Seetapun's theorem above, which can be read as stating that for any set $A \subseteq \N$, the set of infinite subsets of $A$ and $\bar{A}$ avoids cones. Later, Liu showed that $\RT^2_2$ also does not imply $\WKL_0$ and the heart of his proof was a theorem---analogous to Seetapun's theorem---stating that for every set $A \subseteq \N$, the set of infinite subsets of $A$ and $\bar{A}$ avoids PA degree. 

\begin{theorem}[Liu's Theorem, \cite{liu2012}]\label{thm:liu-thm}
    For any set $A \subseteq \mathbb{N}$, either $A$ or $\overline{A}$ must contain an infinite set that is not of PA degree.
\end{theorem}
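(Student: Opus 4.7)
My approach would parallel the Mathias-forcing proof of Seetapun's theorem, with PA avoidance in place of cone avoidance. I would use conditions $(F_0, F_1, X)$, where $F_0 \subseteq A$ and $F_1 \subseteq \overline{A}$ are finite, $X$ is an infinite reservoir satisfying $\max(F_0 \cup F_1) < \min X$, and with the side condition that $X$ is not of PA degree. A sufficiently generic filter determines two sets $G_0 \subseteq A$ and $G_1 \subseteq \overline{A}$, and the goal is to force at least one of them to be infinite and not of PA degree.

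First I would verify the routine density properties: at least one of $X \cap A$ and $X \cap \overline{A}$ is infinite, and every infinite set has an infinite subset that is not of PA degree (via a standard basis theorem applied to the tree of $\{0,1\}$-valued DNC functions), so we can extend any condition to make the reservoir live inside $A$ or inside $\overline{A}$ and keep at least one of $G_0, G_1$ infinite. The substantive task is the extension lemma: for each Turing functional $\Phi_e$, show that the set of conditions forcing $\Phi_e^{G_i}$ to not be a $\{0,1\}$-valued DNC function for at least one $i \in \{0,1\}$ (with $G_i$ infinite) is dense.

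For this, given $(F_0, F_1, X)$ and $e$, I would consider the $\Pi^0_1(X)$ class $\mathcal{C}$ of pairs $(f_0, f_1) \in 2^\omega \times 2^\omega$ such that for each $i$ and each $n$ there is a finite $F'_i \subseteq X$ on side $i$ (meaning $F'_0 \subseteq A$, $F'_1 \subseteq \overline{A}$) with $\min F'_i > n$ and $\Phi_e^{F_i \cup F'_i}$ agreeing with $f_i$ on $[0,n]$. If $\mathcal{C}$ is empty, a compactness/pruning argument produces an extension forcing $\Phi_e^{G_i}$ to be non-total on one side. If $\mathcal{C}$ is nonempty, I would try to select a pair $(f_0, f_1) \in \mathcal{C}$ so that $X \oplus f_0 \oplus f_1$ is still not of PA degree, then shrink the reservoir using this oracle so that any infinite $G_i$ is forced to satisfy $\Phi_e^{G_i} = f_i$; the resulting $\Phi_e^{G_i}$ would then be computable from a non-PA oracle and hence not DNC.

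The main obstacle is the selection step: PA avoidance is not automatically preserved when passing to members of a $\Pi^0_1$ class, so we cannot simply invoke a basis theorem the way Seetapun's proof invokes the cone-avoiding basis theorem. The decisive insight is that the two-sided structure of the forcing rescues the argument: if every pair $(f_0, f_1) \in \mathcal{C}$ were to code PA degree over $X$, one should be able to combine the witnesses on the two sides of the partition to build a $\{0,1\}$-valued DNC function computable from $X$ alone, contradicting that $X$ is not of PA degree. Making this ``pair forcing'' and the associated dichotomy rigorous is the technical heart of the proof and is where I expect the bulk of the work to lie.
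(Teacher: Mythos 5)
The paper does not prove Liu's theorem; it cites it directly from \cite{liu2012} and uses it as a black box. So there is no in-paper proof to compare against. The closest thing in the paper is Section~\ref{sec:main-PA}, where the authors isolate and reprove the technical engine of Liu's argument (``Liu's Lemma'' about c.e.\ sets of valuations) and use it to prove the density analogue of Liu's theorem.

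Your high-level framing (two-sided Mathias conditions, split into an ``empty $\Pi^0_1$ class'' case and a ``nonempty $\Pi^0_1$ class'' case) is structurally reasonable, but the step you flag as ``the decisive insight'' is where the argument breaks. The claim that ``if every pair $(f_0,f_1)\in\CC$ codes PA degree over $X$, then one can combine witnesses to build a $\{0,1\}$-valued DNC function computable from $X$'' is false as stated and does not get rescued by the two-sided structure: the class of $\{0,1\}$-valued DNC functions is itself a nonempty $\Pi^0_1$ class with a computable tree, every one of whose members has PA degree, yet $\0$ does not compute a DNC function. So no amount of cleverness lets you pass from ``every member of a $\Pi^0_1(X)$ class is PA'' to ``$X$ is PA,'' with or without a product structure. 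This is precisely the reason (as the paper emphasizes in Section~\ref{sec:main-PA}) that the cone-avoiding-basis move from Seetapun's proof has no PA analogue. Liu's actual resolution is different in kind: rather than selecting a single good pair from one $\Pi^0_1$ class, one parametrizes a whole family of $\Pi^0_1$ classes $\CC_r$ by valuations $r$, lets $W$ be the c.e.\ set of $r$ with $\CC_r = \0$, and applies the combinatorial dichotomy (Liu's Lemma) to $W$: either $W$ contains an $f$-correct valuation, which forces an incorrect computation and lets you extend the condition directly, or there are arbitrarily many pairwise incompatible valuations outside $W$, which supplies enough independent ``live'' options to make progress by a pigeonhole/counting argument. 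Your proposal is missing this mechanism entirely, and the route you sketch in its place would fail.
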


It seems natural to ask whether our main theorem can be modified to yield PA avoidance rather than cone avoidance, in the same way that Seetapun's theorem can be modified to yield Liu's theorem. We show that this is indeed the case.

\begin{restatable}{theorem}{mainPA}
\label{thm:mainPA}
   For any set $A \subseteq \mathbb{N}$ of positive lower density, there is an infinite subset of $A$ which is not of PA degree.
\end{restatable}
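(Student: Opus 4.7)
The plan is to mimic the proof of Theorem~\ref{thm:main}, replacing Seetapun's theorem with Liu's theorem (Theorem~\ref{thm:liu-thm}) at each step where the former is invoked, and replacing the cone avoiding basis theorem with its PA-avoidance analog (i.e., the Scott/Jockusch--Soare PA basis theorem for $\Pi^0_1$ classes). Fix $A \subseteq \N$ of positive lower density. We use essentially the same variant of Mathias forcing as for Theorem~\ref{thm:main}: conditions are pairs $(\sigma, R)$ with $\sigma$ a finite subset of $A$, $R \subseteq A$ an infinite ``reservoir'' satisfying an appropriate density constraint, and $\max \sigma < \min R$. A sufficiently generic filter produces an infinite $G \subseteq A$. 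Using the characterization that $G$ has PA degree iff $G$ computes a member of some fixed non-empty $\Pi^0_1$ class $P \subseteq \Cantor$ (e.g.\ the class of $\{0,1\}$-valued DNC functions), it suffices, for each Turing functional $\Phi_e$, to meet the requirement
\[
R_e : \Phi_e^G \text{ is not a member of } P.
\]

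The critical density lemma asserts that for every condition $(\sigma, R)$ and every index $e$, some extension of $(\sigma, R)$ forces $R_e$. Following the standard pattern, one splits on whether $\Phi_e^G$ is forced to be total; the partial case is automatic. In the total case, we must produce a subreservoir $R' \subseteq R$ (still satisfying the density constraint) such that for every infinite $G' \subseteq \sigma \cup R'$, $\Phi_e^{G'} \notin P$. Where the proof of Theorem~\ref{thm:main} uses Seetapun's theorem together with the cone avoiding basis theorem to construct the required $R'$ while simultaneously preserving the density of $R$ and the non-computation of $X$, here we invoke Liu's theorem together with the PA basis theorem to construct $R'$ while simultaneously preserving the density of $R$ and the non-membership of $\Phi_e^{G'}$ in $P$.

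The main obstacle is verifying that Liu's theorem slots into the forcing machinery as cleanly as Seetapun's theorem does. The two theorems have the same outward shape---for any $A$, either $A$ or $\bar A$ contains an infinite subset of the specified computability-theoretic weakness---but the proof of Liu's theorem is technically more intricate, and the hybrid argument that combines it with lower-density preservation in a Mathias-style forcing has, to our knowledge, not been carried out before. Provided the proof of Theorem~\ref{thm:main} isolates its use of Seetapun's theorem in a sufficiently modular way, the substitution should be largely mechanical; otherwise, some care is needed to re-engineer the case analysis in the density lemma so that the combinatorial core of Liu's theorem (rather than Seetapun's) can be plugged in while keeping the density bookkeeping intact.
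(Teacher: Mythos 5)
Your plan correctly identifies that Liu's theorem should replace Seetapun's theorem, and this much the paper does. But there is a fatal gap at the second substitution: there is no ``PA-avoidance basis theorem'' for you to invoke. The Scott and Jockusch--Soare basis theorems you cite go in the \emph{wrong direction} (they produce members of PA degree, or members of low degree, but not members avoiding PA degree). More fundamentally, no such basis theorem can exist: the class of $\{0,1\}$-valued DNC functions is a nonempty $\Pi^0_1$ class every member of which has PA degree, so one cannot hope to always select a member of a $\Pi^0_1$ class that avoids PA degree, even before adding the requirement that the selection be compatible with $B_0 \oplus \cdots \oplus B_k \oplus D_{k+1}$. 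This is exactly the point where the proof of Theorem~\ref{thm:main} invokes the cone avoiding basis theorem, and it does not carry over mechanically.

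The paper works around this obstacle in a substantively different way. Rather than using a basis theorem to pick a single $B_{k+1}$ from a $\Pi^0_1$ class, it uses Liu's lemma about c.e.\ sets of valuations: for a c.e.\ set $W$ of valuations and a computable partial $f$ with no computable completion, either $W$ contains an $f$-correct valuation or there are arbitrarily many pairwise incompatible valuations outside $W$. This lemma is applied to a family of $\Pi^0_1$ classes $\CC_r$ indexed by valuations $r$, all defined relative to $E$ alone (not relative to the previously constructed $B_i$'s, which is forced by the absence of a basis theorem). As a consequence the sequence $B_1,\ldots,B_l$ is \emph{rebuilt from scratch} at each stage rather than extended, and the notion of ``lawful sequence'' replaces the simpler inductive requirements. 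Finally, an extra combinatorial lemma is needed---that among sufficiently many $\delta$-dense subsets of $[0,n)$, some pair has $\delta^2/2$-dense intersection---to force two of the $Y$-coordinates of tuples drawn from distinct classes $\CC_{r_i}$, $\CC_{r_j}$ to overlap densely, yielding the new $B_{l+1}$. None of these ideas is in your proposal, and the absence of a PA-avoiding basis theorem is the precise reason they are unavoidable.
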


Just as our proof of Theorem~\ref{thm:main} uses Seetapun's theorem, our proof of the theorem above uses Liu's theorem (along with several techniques first developed to prove Liu's theorem). We will also show that a certain natural-sounding common generalization of both Theorem~\ref{thm:main} and Theorem~\ref{thm:mainPA} is false.

\subsection*{Kolmogorov complexity}

We will also investigate a quantitative version of Theorem~\ref{thm:main}. Stated loosely, that theorem says that it is impossible to encode an infinite amount of information into all infinite subsets of a set of positive lower density. On the other hand, it is obvious that some finite information may be so encoded. For example, it is easy to encode one bit of information into all infinite subsets of a set of lower density $1/2$ (by using the parity of the elements of the set) and, more generally, $n$ bits of information into all infinite subsets of a set of lower density $1/2^n$. But just how much information can be encoded?

We can formulate a precise version of this question using Kolmogorov complexity. To do so, it is convenient to introduce the following definition.

\begin{definition}
For a string $\sigma \in 2^{< \omega}$ and a family of sets $\F \subseteq \powerset(\N)$, define
\[
  C(\sigma \mid \F) = \max_{X \in \F}C^X(\sigma).
\]
\end{definition}

Informally, $C(\sigma \mid \F)$ should be thought of as the Kolmogorov complexity of $\sigma$ relative to an arbitrary element of $\F$. Recall that for any set $A\subseteq \N$, $[A]^\omega$ denotes the family of all infinite subsets of $A$. The quantity
\[
  C(\sigma) - C(\sigma \mid [A]^\omega)
\]
can be thought of as the number of bits of information about $\sigma$ that are encoded into all infinite subsets of $A$. Thus a somewhat more formal version of our question above is: if $\sigma$ is any string and $A \subseteq \N$ is a set of lower density at least $\delta > 0$, then how large can $C(\sigma) - C(\sigma \mid [A]^\omega)$ be?

Based on our example above, it is perhaps natural to guess that this difference should not be much larger than $\log(1/\delta)$---in other words, that it should not be possible to encode more than about $\log(1/\delta)$ bits of information about $\sigma$ into all infinite subsets of a set of lower density $\delta$. Surprisingly, this is not the case.

\begin{restatable}{proposition}{upperbound}\label{prop:ds_kolmogorov}
For any string $\sigma$ and $\delta \in (0, 1]$, there is some set $A\subseteq \N$ of lower density at least $\delta$ such that
\[
  C(\sigma \mid [A]^\omega) \leq \max(0, C^{0'}(\sigma) - \log(1/\delta)) + O(\log\log(1/\delta)).
\]
\end{restatable}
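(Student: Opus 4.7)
The plan is to construct $A$ as a single arithmetic progression modulo $2^n$, where $n = \lfloor \log_2(1/\delta) \rfloor$. Concretely, for a residue $r \in \{0, 1, \ldots, 2^n - 1\}$ to be chosen (using the $0'$-oracle), set $A = \{x \in \N : x \equiv r \pmod{2^n}\}$. This set has density exactly $2^{-n} \geq \delta$, hence lower density at least $\delta$. For any infinite $B \subseteq A$, an $O(\log n) = O(\log \log(1/\delta))$-bit program can extract $r$ from $B$ by reducing any element of $B$ modulo $2^n$ (the program only needs to hard-code $n$). So $r$ is essentially ``free'' information for any $B$-oracle program, providing $n$ bits of context.

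With this set-up the remaining task is to choose $r$ so that $\sigma$ can be reconstructed from $r$ together with a program of length at most $\max(0, k - n)$, where $k = C^{0'}(\sigma)$. Fixing a shortest $0'$-program $p$ for $\sigma$, so $|p| = k$, the natural choice is to set $r$ to be the first $\min(n, k)$ bits of $p$ (padded with zeros if $k < n$). A $B$-oracle program then extracts $r$, concatenates with the remaining $\max(0, k - n)$ bits of $p$ (hard-coded), and reconstitutes $p$. The total bit count so far is $\max(0, k - n) + O(\log n)$, which matches the claimed bound.

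The main obstacle is that reconstituting $p$ is not the same as computing $\sigma$: the program now holds a $0'$-program $p$, but it has only $B$-oracle access and no $0'$ oracle. Indeed, by Theorem~\ref{thm:main} of this paper, if $A$ has positive lower density then some infinite $B \subseteq A$ fails to compute $0'$, so we cannot uniformly use $B$ as a surrogate for $0'$. The plan for this step is to leverage the computable enumeration $0'_s \nearrow 0'$: the program runs $p^{0'_s}$ for $s = 1, 2, \ldots$ and waits for the output to stabilize, outputting the stable value as $\sigma$. Correctness is immediate once $0'_s$ agrees with $0'$ on the use of $p^{0'}$, but detecting this stabilization without a $0'$ oracle requires a short ``stopping hint''.

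The hard part, where I expect most of the technical work, is proving that this stopping hint can be encoded in only $O(\log n)$ additional bits regardless of how large $k$ is. In the worst case the $0'$-use $u$ of $p$ grows faster than any computable function of $|p|$, so a naive bound on $u$ would not fit. The approach I would try is to enumerate length-$k$ $0'$-programs via $0'$ and use an index (of size $O(k)$ bits, already charged to the program) together with a shortness certificate for $p$ that comes ``for free'' from the minimality of $p$. Alternatively, one could encode into the program's auxiliary bits a prefix of $\Omega$ (Chaitin's halting probability) of just-long-enough length to resolve all $0'$-queries made by $p$, and show by a counting argument that only $O(\log n)$ bits beyond $k - n$ are ever needed. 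Either way, the core question is whether the ``$0'$-use compression'' can truly be done in $O(\log n)$ bits on top of the $k - n$ already spent, and this is where I would concentrate the argument.
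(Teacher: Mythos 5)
Your construction captures the easy half of the argument---using the residue of $A$ mod $2^n$ to give each infinite subset of $A$ free access to $\approx\log(1/\delta)$ bits of a shortest $0'$-program $p$ for $\sigma$---and you are right to flag the stopping-hint issue as the real difficulty. However, neither of your two suggested repairs works, and the paper's actual fix is different and much simpler than what you sketched. The number of bits of $0'$ (equivalently, of $\Omega$) needed to resolve the oracle queries made by $p^{0'}$ is governed by the use $u$ of that computation, and $u$ is not bounded by any computable function of $|p|=k$; so the $\Omega$-prefix idea cannot be compressed into $O(\log\log(1/\delta))$ bits. The ``enumerate length-$k$ $0'$-programs'' idea has the same problem: you cannot enumerate $0'$-programs without $0'$, and even setting that aside an index into the enumeration costs $\Theta(k)$ bits, which you have already fully spent.

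The idea you are missing is to build the lower bound into $A$ itself, i.e.\ to take $A=\{x\geq N : x\equiv r \pmod{2^m}\}$ for a suitable threshold $N$ rather than the full congruence class. This changes the density only by $o(1)$ (so lower density is still $\geq \delta$), but now \emph{every} element of every infinite $B\subseteq A$ is at least $N$. The relevant fact, which is the easy direction of a theorem of Vereshchagin (and which the paper records explicitly), is that for $k=C^{0'}(\sigma)$ there is an $N$ such that $C(\sigma\mid p,n)=O(1)$ for all $n\geq N$: roughly, a fixed $O(1)$-size machine runs $p$ against the approximation $0'_n$, and once $n\geq N$ this approximation is correct on the use of $p^{0'}$. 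So the decoder reads off $\min(B)$ to get such an $n$, reads off $r=\min(B)\bmod 2^m$ to get $m$ bits of $p$, hard-codes the remaining $\max(0,k-m)$ bits of $p$ plus $m$ itself (costing $O(\log m)=O(\log\log(1/\delta))$ bits), and applies the $O(1)$ Vereshchagin machine. No separate ``stopping hint'' is needed---any element of $B$ already is one. You had the right obstacle and the right lower-order term, but the mechanism that makes the count come out is the constraint $x\geq N$ in the definition of $A$, together with Vereshchagin's theorem, not an $\Omega$-prefix or a shortness certificate.
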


In other words, in addition to lowering the complexity of $\sigma$ by $\log(1/\delta)$, we can also lower it to the $0'$ complexity of $\sigma$. In fact, this upper bound on $C(\sigma \mid [A]^\omega)$ is also optimal (up to a small error term).

\begin{restatable}{theorem}{lowerbound}\label{thm:ds_kolmogorov}
For any string $\sigma$ and set $A \subseteq \N$ of lower density at least $\delta \in (0, 1]$,
\[
  C(\sigma \mid [A]^\omega) \geq C^{0'}(\sigma) - \log(1/\delta) - O(\log\log(1/\delta))
\]
where the constant hidden by the $O(\cdot)$ notation does not depend on $\sigma$ or $A$.
\end{restatable}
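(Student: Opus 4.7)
The plan is to argue by contrapositive: assuming every $B \in [A]^\omega$ satisfies $C^B(\sigma) \leq m$, we must show $C^{0'}(\sigma) \leq m + \log(1/\delta) + O(\log\log(1/\delta))$. The hypothesis is equivalent to the $\Sigma^0_1$ open cover
\[
  [A]^\omega \subseteq \bigcup_{|p| \leq m} S_p, \qquad S_p \defeq \{B \in 2^\omega : U^B(p) = \sigma\},
\]
uniformly definable in $\sigma$ and $p$. The strategy is to produce a $0'$-enumerable list $\L$ of candidate outputs, of size at most $O(2^m/\delta)$, that must contain $\sigma$; prefix-free coding of $\sigma$'s index in $\L$ then yields the claimed bound.

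To build $\L$, fix a $0'$-computable probability measure $\lambda$ on $2^\omega$; a natural candidate is the Bernoulli-$\delta$ measure (each $n$ included in $B$ independently with probability $\delta$), chosen so that $\lambda$-typical sets have density $\delta$, matching $\lowerd(A)$. To each candidate output $\tau$ assign the weight
\[
  w(\tau) \defeq \sum_{|p| \leq m} \lambda(\{B : U^B(p) = \tau\}),
\]
which is $0'$-semicomputable from below. Since the sets $\{B : U^B(p) = \tau\}$ are disjoint across $\tau$ for each fixed $p$, one has $\sum_\tau w(\tau) \leq 2^{m+1}$. If one can prove the key lower bound $w(\sigma) \geq c\delta$ for an absolute constant $c > 0$, then Markov's inequality bounds $|\{\tau : w(\tau) \geq c\delta\}|$ by $2^{m+1}/(c\delta)$; this $0'$-enumerable set is our list $\L$, and specifying $\sigma$ by its index in $\L$ uses $\log|\L| + O(\log\log|\L|) = m + \log(1/\delta) + O(\log\log(1/\delta))$ bits.

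The hard part is precisely establishing $w(\sigma) \gtrsim \delta$, i.e.\ that the cover $\bigcup_p S_p$ must capture a $\delta$-fraction of the $\lambda$-mass. Under the $A$-dependent measure $\mu_A$ (include each $n \in A$ independently with probability $\tfrac12$) one trivially gets $\sum_p \mu_A(S_p) \geq \mu_A([A]^\omega) = 1$, but $0'$ cannot access $\mu_A$. The proof must therefore transfer this mass to the $0'$-computable $\lambda$ using only the assumption $\lowerd(A) \geq \delta$. I expect the comparison to proceed by decomposing each $S_p$ into minimal-use cylinders $[\alpha]$ compatible with $A$ (so $\alpha(n)=1 \Rightarrow n \in A$) and relating the $\lambda$-mass $\delta^{|\alpha^{-1}(1)|}(1-\delta)^{|\alpha|-|\alpha^{-1}(1)|}$ to the $\mu_A$-mass $2^{-|A\cap[|\alpha|]|}$ via the density estimate at an appropriately large scale. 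The additive $O(\log\log(1/\delta))$ term absorbs both the precision needed to $0'$-approximate $w$ from below and the prefix-free coding overhead for indexing into $\L$.
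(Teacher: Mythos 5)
The gap is exactly where you flag it: the inequality $w(\sigma)\gtrsim\delta$ does not follow from the hypothesis, and the cylinder comparison you sketch will not close it. The hypothesis says the cover $\bigcup_{|p|\le m}S_p$ contains $[A]^\omega$, but $[A]^\omega$ has Bernoulli-$\delta$ measure $0$ whenever $A\ne\N$ (a $\lambda$-random $B$ almost surely contains infinitely many points outside $A$), so there is no a priori reason the cover should carry $\lambda$-mass $\Omega(\delta)$. For the cylinder-by-cylinder transfer, consider a minimal-use cylinder $[\alpha]$ of length $n$ compatible with $A$, with $j$ ones; the ratio
\[
\frac{\lambda([\alpha])}{\mu_A([\alpha])}=\frac{\delta^{j}(1-\delta)^{n-j}}{2^{-|A\cap[0,n)|}}
\]
depends on $n$ in an essential way and admits no positive lower bound in terms of $\delta$ alone. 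For instance, if $A$ is locally much denser than $\delta$ on $[0,n)$ and $j$ is a constant fraction of $n$, this ratio decays exponentially in $n$, and $n$ is out of your control (it is the use of $U^B(p)$). Since the hypothesis constrains only the \emph{lower} density, $A$ is free to have such dense stretches. No single fixed $0'$-computable measure $\lambda$ can dominate the whole family of $\mu_A$'s uniformly in the way you need, so this transfer step requires a genuinely new idea rather than a routine comparison.

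The paper proves the theorem by a combinatorial mechanism instead. It calls a finite set of strings $F=\{\tau_1,\ldots,\tau_n\}$ \emph{$k$-safe at density $\delta$} if there are $\delta$-dense sets $X_1,\ldots,X_n$ and a threshold $m$ such that for every subcollection $i_1<\dots<i_l$ and every finite $s\subseteq X_{i_1}\cap\dots\cap X_{i_l}$ with $|s|\ge m$, one has $|\{\tau : C^s(\tau)<k\}\cup\{\tau_{i_1},\ldots,\tau_{i_l}\}|\le 2^k$. An averaging argument (for large $N$, some $x<N$ lies in at least $\lceil\delta n\rceil$ of the $X_i$'s, forcing their intersection to be infinite) yields $|F|\le 2^k/\delta$; the hypothesis $C(\sigma\mid[A]^\omega)<k$ shows that adding $A$ itself as a new witness keeps $F\cup\{\sigma\}$ $k$-safe; and being $k$-safe is $\Sigma^0_2$ (the witness is an infinite $\Pi^0_1$ tree, by K\"onig's lemma), hence c.e.\ in $0'$. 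Enumerating a maximal $k$-safe family with a $0'$ oracle produces a $0'$-enumerable list of size at most $2^k/\delta$ containing $\sigma$, and the bound follows by indexing. Your high-level frame---build a small $0'$-enumerable list containing $\sigma$ and index into it---matches the paper's, but the mechanism that bounds the list and certifies $\sigma$'s membership is combinatorial rather than measure-theoretic, and that combinatorics is precisely what your sketch leaves open.
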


We will prove these two results in Section~\ref{sec:kolmogorov} and comment on some further questions around how much information can be encoded into all infinite subsets of a dense set.

\subsection*{Acknowledgements}

We thank Andrew Marks for posing the question that motivated this paper and for several helpful conversations and Damir Dzhafarov for a useful conversation on the topic of Weihrauch reducibility.

\section{Preliminaries}

\subsection{Notation}

We will use the following notation for finite sets of natural numbers. For natural numbers $n < m$,
\begin{align*}
  [n] &= \{0, 1, \ldots, n\}\\
  [n, m) &= \{n, n + 1,\ldots, m - 1\}\\
  (n, m) &= \{n + 1, n + 2, \ldots, m - 1\}.
\end{align*}
We will also use the following notation related to infinite sets of natural numbers. For a set $A \subseteq \N$,
\begin{align*}
  A(n) &= \text{ the $n^\text{th}$ bit of $A$, i.e.\ $0$ if $n \notin A$ and $1$ if $n \in A$}\\
  \bar{A} &= \text{ the complement of $A$}\\
  [A]^\omega &= \text{ the set of infinite subsets of $A$}.
\end{align*}

By a \emph{Turing functional} we mean a program $\Phi$ with oracle access which has inputs in $\N$ and outputs in $\{0, 1\}$. We will use the following notation related to Turing functionals. For a Turing functional $\Phi$, oracle $A \subseteq \N$ and number $n \in \N$,
\begin{align*}
  \Phi(A, n) &= \text{ the output of $\Phi$ with oracle $A$ on input $n$}\\
  \Phi(A) &= \text{ the partial function $\N \to \{0, 1\}$ given by $n\mapsto \Phi(A, n)$}\\
  \Phi(A, n) \neq b &\; \text{ means that either $\Phi(A, n)$ diverges or $\Phi(A, n)\conv \neq b$}.
\end{align*}

We will sometimes want to consider a finite set $s \subseteq \N$ as an initial segment of an oracle. We will use $\Phi(s, n)$ to mean the output of $\Phi$ on input $n$ when run for at most $\max(s)$ steps, using $s$ as an oracle and automatically diverging if there is any query to the oracle about a number larger than $\max(s)$. 

Similarly, we will sometimes want to consider the Kolmogorov complexity of a string $\sigma$ relative to a finite set $s \subseteq \N$, which we will denote by $C^s(\sigma)$. More precisely, $C^s(\sigma)$ will denote the length of the shortest program which outputs $\sigma$ when using an oracle for $s$ and which runs for at most $\max(s)$ steps, never makes an oracle query about a number larger than $\max(s)$ and has length at most $\max(s)$. Note that it is possible that no such program exists due to the limitations on length and running time. In this case we define $C^s(\sigma) = \infty$. Also note that $C^s(\sigma)$ is computable.

\subsection{Density of sets of natural numbers}
\label{sec:prelim_density}

If $A \subseteq \N$ is a set of natural numbers then its \term{lower density}, denoted $\lowerd$, and \term{upper density}, denoted $\upperd$, are defined by
\begin{align*}
  \lowerd(A) &= \liminf_{n \to \infty} \frac{|A\cap[n]|}{n + 1}\\
  \upperd(A) &= \limsup_{n \to \infty} \frac{|A\cap[n]|}{n + 1}.
\end{align*}
If $\lowerd(A) > 0$ then we say $A$ has \term{positive lower density}. Similarly, if $\upperd(A) > 0$ then $A$ has \term{positive upper density}.

In order to work with dense subsets of $\N$, it will be helpful to introduce some auxiliary terminology. To motivate this terminology, note that if the upper density of a set $A$ is strictly greater than $\delta$ then there are infinitely many $n$ such that
\[
  |A\cap [n]| \geq \delta \cdot (n + 1).
\]
It is useful to be able to speak about the collection of all such $n$. To that end, we introduce the following terminology. For $A, D \subseteq \N$ and $\delta > 0$:
\begin{itemize}
\item $A$ is \term{$\delta$-dense at $n$} if $|A\cap [n]| \geq \delta \cdot (n + 1)$.
\item $A$ is \term{$\delta$-dense} if it is $\delta$-dense at every $n \in \N$ and \term{dense} if it is $\delta$-dense for some $\delta > 0$.
\item $A$ is \term{$\delta$-dense along $D$} if it is $\delta$-dense at every $n \in D$ and \term{dense along $D$} if it is $\delta$-dense along $D$ for some $\delta > 0$.
\end{itemize}

The reason these notions are useful is that they act as lower complexity versions of the properties of having positive upper or lower density.

For example, a set $A$ has positive lower density if and only if $\{0\}\cup A$ is $\delta$-dense for some $\delta > 0$ (though note that $\delta$ may need to be much lower than $\lowerd(A)$), but the property of having positive lower density is $\Sigma^0_2$, while the property of being $\delta$-dense is $\Pi^0_1$.

Similarly, a set $A$ has positive upper density if and only if it is $\delta$-dense along $D$ for some $\delta > 0$ and infinite set $D \subseteq \N$ (and this time, $\delta$ can be arbitrarily close to $\upperd(A)$), but the property of having positive upper density is $\Sigma^0_3$ while the property of being $\delta$-dense along $D$ is again $\Pi^0_1$.

\subsection{Mathias forcing}

Mathias forcing is a useful tool for constructing an infinite subset of a set while ensuring that the subset being constructed satisfies various properties. We will briefly review the basics of Mathias forcing; for a more complete introduction, see~\cite{hirschfeldt2015slicing}, Section 6.5.

A \term{condition for Mathias forcing} is a pair $(s, A)$ consisting of a finite set $s \subseteq \N$, called the \term{stem}, and an infinite set $A \subseteq \N$, called the \term{reservoir}, such that $\max(s) < \min(A)$. Often, the reservoir $A$ is required to come from some restricted class of sets, such as a Turing ideal or co-cone.

A Mathias condition, $(s, A)$, is extended by a condition, $(s', A')$, written $(s, A) \geq (s', A')$, if all of the following hold.
\begin{itemize}
\item $s \subseteq s'$
\item $A \supseteq A'$
\item and for all $n \in s' \setminus s$, $n \in A$.
\end{itemize}
In other words, $(s', A')$ is formed from $(s, A)$ by choosing finitely many elements of the reservoir to add to the stem and by removing some elements (possibly infinitely many) from the reservoir.

A Mathias condition, $(s, A)$, should be thought of as partially specifying a subset $G$ of $\N$ as follows: the stem $s$ consists of numbers that have already been put into $G$ and the reservoir $A$ consists of numbers that may be put into $G$ at some later stage. Thus any number which is not in $s \cup A$ is definitely not in $G$.

This can be made precise as follows. Any filter for Mathias forcing can be used to define a subset of $\N$ by taking the union of all the stems in the filter. We will denote this subset by $G$ and, in a slight abuse of terminology, will often conflate it with the filter itself (i.e.\ we will refer to $G$ as the generic).

It is straightforward to check that if the filter is sufficiently generic then $G$ is infinite. Also, say that a set $B$ is \term{compatible} with a condition $(s, A)$ if $s \subseteq B \subseteq s \cup A$. If $(s, A)$ is any element of the filter defining $G$ then it is straightforward to check that $G$ is compatible with $(s, A)$.

Thus one way (though not the only way) to show that every sufficiently generic $G$ satisfies some property $P$ is to show that the following set of conditions
\[
  \{(s, A) \mid \text{all infinite sets $C$ compatible with $(s, A)$ satisfy $P$}\}
\]
is dense in the Mathias forcing partial order.

\subsection{Seetapun's theorem}

In the introduction, we mentioned that our proof of Theorem~\ref{thm:main} uses Seetapun's theorem.

\seetapun*

We will also need to use a couple corollaries of this theorem.

\begin{corollary}
\label{cor:seetapun1}
Suppose $A \subseteq \N$ is infinite and does not compute $X$. Then for any $B \subseteq A$ there is some infinite subset of either $B$ or $A \setminus B$ which does not compute $X$.
\end{corollary}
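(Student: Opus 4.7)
The plan is to reduce to Seetapun's theorem (relativized to $A$) by identifying $A$ with $\N$ via its principal function. Let $f: \N \to A$ be the order-preserving bijection, so that $f(n)$ is the $n$-th element of $A$. Note that $f \leq_T A$. Define $\tilde B = f^{-1}(B) \subseteq \N$. A subset of $\N$ is contained in $\tilde B$ (resp.\ $\bar{\tilde B}$) exactly when its $f$-image is contained in $B$ (resp.\ $A \setminus B$).

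Since $A$ does not compute $X$, we have $X \not\leq_T A$, which lets us apply Seetapun's theorem relativized to the oracle $A$: for any partition of $\N$ into two pieces, there is an infinite subset $D$ of one of the pieces with $D \oplus A \not\geq_T X$. Applying this to the partition $\N = \tilde B \sqcup \bar{\tilde B}$ yields an infinite $D$, contained in $\tilde B$ or in $\bar{\tilde B}$, such that $D \oplus A$ does not compute $X$.

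Now set $C = f(D)$. Then $C$ is an infinite subset of $A$, and by construction $C \subseteq B$ or $C \subseteq A \setminus B$. Moreover, $C \leq_T D \oplus f \leq_T D \oplus A$, so $C$ does not compute $X$, as required.

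The only nonroutine point is the appeal to the $A$-relativized form of Seetapun's theorem; this is standard, as the proof of Theorem~\ref{thm:seetapun} (see~\cite{hirschfeldt2015slicing}, Theorem 6.63) relativizes verbatim to any oracle, in particular to $A$. I do not expect any real obstacle here.
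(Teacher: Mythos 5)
Your argument is correct and is essentially the paper's own proof: both pass between $A$ and $\N$ via an $A$-computable bijection (you use the principal function $f\colon\N\to A$, the paper uses its inverse $\pi\colon A\to\N$), apply Seetapun's theorem relativized to $A$, and transport the resulting infinite set back across the bijection, noting it is computable from the solution joined with $A$.
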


\begin{proof}
Let $\pi \colon A \to \N$ be a bijection which is computable from $A$. The idea is to consider Seetapun's theorem applied to $\pi(B)$.

Note that since $\pi$ is a bijection, $\N \setminus \pi(B) = \pi(A \setminus B)$. Thus by Seetapun's theorem relativized to $A$, either $\pi(B)$  or $\pi(A\setminus B)$ has an infinite subset $C$ such that $C\oplus A$ does not compute $X$. Since $\pi$ is a bijection, $\pi^{-1}(C)$ is an infinite subset of either $B$ or $A\setminus B$. And since $A$ computes $\pi$, $C\oplus A$ computes $\pi^{-1}(C)$ and hence $\pi^{-1}(C)$ does not compute $X$.
\end{proof}

\begin{corollary}
\label{cor:seetapun2}
Suppose $A \subseteq \N$ is infinite and does not compute $X$. Then for any finite partition $B_1,\ldots,B_n$ of $A$, at least one of the $B_i$'s has an infinite subset which does not compute $X$.
\end{corollary}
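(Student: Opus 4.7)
The plan is to prove this by induction on $n$, using Corollary~\ref{cor:seetapun1} at each step. The base case $n = 1$ is immediate, since $B_1 = A$ itself is an infinite subset of $A$ which does not compute $X$.

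For the inductive step, suppose the corollary holds for partitions into $n - 1$ pieces, and let $B_1, \ldots, B_n$ be a partition of $A$. I would apply Corollary~\ref{cor:seetapun1} to the set $B_1 \subseteq A$. This yields an infinite set $C$ which does not compute $X$ and is contained in either $B_1$ or $A \setminus B_1$. In the first case we are done, since $C$ is an infinite subset of $B_1$ which does not compute $X$. In the second case, $C$ is an infinite subset of $A \setminus B_1 = B_2 \cup \cdots \cup B_n$, so the sets $C \cap B_2, \ldots, C \cap B_n$ form a partition of the infinite set $C$ into $n - 1$ pieces. Applying the inductive hypothesis to $C$ and this partition (noting that $C$ is infinite and does not compute $X$) produces some $i \in \{2, \ldots, n\}$ such that $C \cap B_i$ has an infinite subset which does not compute $X$, and this is also an infinite subset of $B_i$.

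There is no real obstacle here; the only thing to verify is that Corollary~\ref{cor:seetapun1} can be iterated in this way, which just amounts to checking that at each stage we are applying it to an infinite set which does not compute $X$ (a hypothesis that is preserved along the induction by construction).
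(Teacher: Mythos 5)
Your proof is correct and is essentially identical to the one in the paper: both argue by induction on the number of pieces, split off one piece of the partition, apply Corollary~\ref{cor:seetapun1} to reduce to either that piece or the union of the rest, and in the latter case intersect the remaining pieces with the resulting set $C$ and invoke the inductive hypothesis (the paper peels off the last piece $B_{n+1}$ where you peel off $B_1$, which is purely cosmetic).
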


\begin{proof}
The idea of the proof is to use induction on $n$ and to reduce the inductive case to the previous corollary.

The base case, $n = 1$ is trivial since in this case $B_1 = A$ and thus $B_1$ itself does not compute $X$.

Now assume for induction that the statement holds for all $A$, $X$ and partitions of length at $n$. Fix $A$ and $X$ as in the statement of the corollary and suppose $B_1,\ldots, B_{n + 1}$ is a partition of $A$. By applying Corollary~\ref{cor:seetapun1} to the sets $\bigcup_{i \leq n}B_i$ and $B_{n + 1}$, we obtain an infinite set $C$ which is a subset of either $\bigcup_{i \leq n}B_i$ or $B_{n + 1}$ such that $C$ does not compute $X$.

If $C \subseteq B_{n + 1}$ then we are done. If $C \subseteq \bigcup_{i \leq n}B_i$ then we can apply the inductive assumption to the partition $B_1\cap C, \ldots, B_n \cap C$ of $C$ to obtain an infinite set $D$ which does not compute $X$ and which is a subset of $B_i\cap C$ (and thus of $B_i$ as well) for some $i \leq n$.
\end{proof}

\section{Proof of the main theorem}
\label{sec:main}

For the rest of this section, suppose that $X$ is uncomputable and $A \subseteq \N$ has positive lower density. We want to construct an infinite subset of $A$ which does not compute $X$. We will construct this set using Mathias forcing. The basic strategy is as follows: let $G$ be a Mathias generic which is compatible with the condition $(\0, A)$ (i.e.\ the generic filter used to define $G$ contains the condition $(\0, A)$). Then we have
\begin{itemize}
    \item Since $G$ is compatible with $(\0, A)$, $G \subseteq A$.
    \item Since $G$ is sufficiently generic, $G$ is infinite.
\end{itemize}
If we could also show that any sufficiently generic $G$ does not compute $X$ then we would be done.

However, it is not hard to see that a generic for plain Mathias forcing---with no restrictions on the possible reservoirs---does not have this property. For example, suppose $B$ is an infinite set such that all infinite subsets of $B$ compute $X$. Then any Mathias generic $G$ which is compatible with $(\0, B)$ will compute $X$. To solve this problem, we will impose a restriction on the reservoirs of the Mathias conditions which guarantees that every sufficiently generic $G$ does not compute $X$.

\subsection{A natural idea that doesn't work}

Perhaps the most obvious restriction to put on the reservoirs is to require them to have positive lower density. Here's why this seems natural. The problem with plain Mathias forcing is that there are conditions $(s, B)$ such that \emph{every set} compatible with $(s, B)$ computes $X$. However, if we believe the statement we are trying to prove---that every set of positive lower density contains an infinite subset which does not compute $X$---then this same problem cannot occur when $B$ is required to have positive lower density.

However, this approach does not work. The problem, briefly stated is that even though every set of positive lower density contains an infinite subset which does not compute $X$ (as we will eventually show), there is a set of positive lower density, all of whose subsets of positive lower density uniformly compute $X$. Thus if we only use reservoirs that have positive lower density, then given a condition $(s, B)$ and a Turing functional $\Phi$, there is no obvious way to find a condition $(s', B') \leq (s, B)$ forcing that the generic $G$ does not compute $X$ via $\Phi$ (since it could be that for any such $(s', B')$, $s'\cup B'$ itself computes $X$ via $\Phi$).

In fact, it is even possible to show that if we only use reservoirs that have positive lower density then it is possible that $G$ is forced to compute $X$. This can be shown using the following proposition, which we will prove in Section~\ref{sec:dense_subsets}.

\begin{restatable*}{proposition}{densesubsetsofdense}
For any set $X$, there is some set $A \subseteq \N$ of positive lower density such that all subsets of $A$ of positive lower density compute $X$ uniformly.
\end{restatable*}

\begin{proposition}
Let $A$ and $X$ be as in the above proposition. If $G$ is generic for Mathias forcing with reservoirs of positive lower density and $G$ is compatible with $(\0, A)$ then $G$ computes $X$.
\end{proposition}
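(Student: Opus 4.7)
The plan is to exploit the uniform computation provided by the preceding proposition. Fix a single Turing functional $\Phi$ such that $\Phi^C = X$ for every $C \subseteq A$ of positive lower density. I will show that for each $n$ the set of conditions forcing $\Phi^G(n) = X(n)$ is dense, so by genericity $\Phi^G = X$ and hence $G$ computes $X$.

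Every condition in the generic filter extends $(\varnothing, A)$, so we may restrict attention to conditions $(s, B)$ with $s, B \subseteq A$. Since $B$ has positive lower density (by the constraint on the forcing), $s \cup B$ is itself a subset of $A$ of positive lower density, so $\Phi^{s \cup B}(n) \conv = X(n)$ with some finite use $u$ depending on $n$. Let $U = \max(u, \max(s))$ and consider the extension $(s', B')$ given by $s' = s \cup (B \cap [0, U])$ and $B' = B \cap (U, \infty)$. The reservoir $B'$ differs from $B$ in only finitely many elements, so it still has positive lower density, and $(s', B')$ is a legitimate extension of $(s, B)$ in our restricted forcing. A direct check gives $s' \cap [0, u] = (s \cup B) \cap [0, u]$, so the computation $\Phi^{s'}(n)$ makes the same oracle queries and returns the same value $X(n)$ as $\Phi^{s \cup B}(n)$. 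Any $G$ compatible with $(s', B')$ then satisfies $G \cap [0, u] = s' \cap [0, u]$, since $B' \cap [0, u] = \varnothing$, and therefore $\Phi^G(n) = X(n)$ as well. This establishes density.

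The argument is essentially routine; the only point requiring any care is the observation that absorbing the finite use of $\Phi^{s \cup B}(n)$ into the stem preserves the positive-lower-density constraint on the reservoir, and this is immediate because finite set modifications do not change $\lowerd$. I do not anticipate any real obstacles beyond making sure the uniformity of $\Phi$ in the hypothesis is used in precisely this way---a single functional working across all the sets $s \cup B$ that arise along the forcing is exactly what lets us pass from convergence of $\Phi^{s \cup B}(n)$ to convergence of $\Phi^G(n)$.
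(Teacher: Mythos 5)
Your proof is correct and follows essentially the same route as the paper's: fix the uniform functional $\Phi$, show that for each $n$ the conditions $(s',B')$ with $\Phi(s',n)\conv = X(n)$ are dense below $(\varnothing,A)$, and obtain such a condition by absorbing an initial segment of $s\cup B$ covering the use of $\Phi^{s\cup B}(n)$ into the stem, noting that the reservoir retains positive lower density under a finite modification. The only difference from the paper is that you make the use $u$ explicit where the paper simply says ``long enough to witness this''; the content is identical.
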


\begin{proof}
Let $\Phi$ be a Turing functional witnessing the fact that all subsets of $A$ of positive lower density compute $X$ uniformly (in other words, such that for all $B \subseteq A$ of positive lower density, $\Phi(B) = X$). For each $n$, we will show that the set of conditions
\[
  \D_n = \{(s, B) \mid \Phi(s, n)\conv = X(n)\}
\]
is dense below $(\0, A)$. This shows that for each $n$, $\Phi(G, n)\conv = X(n)$ and hence that $\Phi(G) = X$.

So fix $n$ for which we will show that $\D_n$ is dense below $(\0, A)$. Let $(s, B)$ be an arbitrary condition extending $(\0, A)$. We need to show that there is some condition $(s', B')$ extending $(s, B)$ such that $\Phi(s', n)\conv = X(n)$.

Since $s\cup B$ is a subset of $A$ of positive lower density, we must have $\Phi(s\cup B, n)\conv = X(n)$. Let $s'$ be an initial segment of $s\cup B$ which is long enough to witness this and set $B' = B\setminus s'$. Then $(s', B') \leq (s, B)$ and $\Phi(s', n)\conv = X(n)$ as desired.
\end{proof}

\subsection{Density Mathias forcing}

We have seen that if we want to use Mathias forcing to construct an infinite subset of $A$ which does not compute $X$ then neither allowing all infinite sets as reservoirs nor requiring the reservoirs to have positive lower density works. The problem with the former is that there are too many reservoirs available, in particular there are reservoirs whose infinite subsets all compute $X$. The problem with the latter is that there are too few reservoirs available, in particular, for a fixed reservoir $B$ and Turing functional $\Phi$ it may not be possible to find a reservoir $B' \subseteq B$ witnessing that not all infinite subsets of $B$ compute $X$ via $\Phi$.
Thus we want a requirement that lies in-between these two extremes.

We will now describe such a requirement. In essence, we will allow a set $B$ to be a reservoir if it has positive upper density and the fact that it has positive upper density is witnessed by a set that does not compute $X$. We will refer to Mathias forcing with reservoirs satisfying this condition as \term{density Mathias forcing}.

\begin{definition}
A condition for \term{density Mathias forcing} is a Mathias condition $(s, B)$ for which there is some set $D$ such that
\begin{enumerate}
\item $B$ is dense along $D$
\item and $D$ does not compute $X$.
\end{enumerate}
\end{definition}

Note that $(\0, A)$ itself is a density Mathias condition: $A$ is dense along $\N\setminus [\min(A)]$ and since $X$ is uncomputable, $\N\setminus [\min(A)]$ does not compute $X$.

\subsection{The proof}

We will now show that if $G$ is sufficiently generic for density Mathias forcing then $G$ does not compute $X$. We will begin with a technical lemma which we will use in the proof. Then we will prove the main lemma, which shows that for a single program $\Phi$ there is a dense set of conditions which guarantee that $\Phi(G) \neq X$.

\begin{lemma}
\label{lemma:forcing_partition}
If $(s, B)$ is a density Mathias condition and $B_1,\ldots, B_k$ is a finite partition of $B$ then for some $i \leq k$, $(s, B_i)$ is a density Mathias condition.
\end{lemma}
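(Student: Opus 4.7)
The plan is to combine pigeonhole with Corollary~\ref{cor:seetapun2} (the finite-partition version of Seetapun's theorem). Let $D$ witness that $(s,B)$ is a density Mathias condition, so $D$ does not compute $X$ and there is some $\delta > 0$ such that $B$ is $\delta$-dense along $D$. For each $n \in D$ we have $|B \cap [n]| \geq \delta(n+1)$, and since $B_1,\ldots,B_k$ partition $B$, by pigeonhole at least one of the $B_i$ satisfies $|B_i \cap [n]| \geq (\delta/k)(n+1)$.

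First, I would use this to partition $D$ itself: for each $n \in D$, let $f(n)$ be the least $i \leq k$ such that $|B_i \cap [n]| \geq (\delta/k)(n+1)$, and put $D_i = \{n \in D : f(n) = i\}$. This gives a finite partition $D_1,\ldots,D_k$ of $D$. By construction, for every $n \in D_i$ the set $B_i$ satisfies $|B_i \cap [n]| \geq (\delta/k)(n+1)$.

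Next, I would apply Corollary~\ref{cor:seetapun2} to the partition $D_1,\ldots,D_k$ of $D$, which is legal because $D$ is infinite and does not compute $X$. This yields some $i \leq k$ and an infinite $D' \subseteq D_i$ such that $D'$ does not compute $X$. Since $D' \subseteq D_i$, we have $|B_i \cap [n]| \geq (\delta/k)(n+1)$ for every $n \in D'$, so $B_i$ is $(\delta/k)$-dense along $D'$. In particular $B_i$ is infinite, and $B_i \subseteq B$ ensures $\max(s) < \min(B_i)$, so $(s, B_i)$ is a valid Mathias condition. Together with the witness $D'$, this shows $(s, B_i)$ is a density Mathias condition.

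I do not expect a serious obstacle here; the only subtle point is that one must partition $D$ (rather than $B$) in order to apply the Seetapun-style corollary, and that one must pick a definite index $i = f(n)$ at each $n$ so that the $D_i$ genuinely partition $D$. Everything else is a pigeonhole bookkeeping step: the loss of a factor of $k$ in the density is harmless because density Mathias conditions are only required to be dense along $D$ for \emph{some} positive density, not a fixed one.
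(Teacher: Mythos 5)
Your proposal is correct and matches the paper's own proof essentially step for step: both partition $D$ (not $B$) by sending each $n \in D$ to the least index $i$ witnessing $(\delta/k)$-density of $B_i$ at $n$, then apply Corollary~\ref{cor:seetapun2} to extract an infinite $D' \subseteq D_i$ not computing $X$, which witnesses that $(s, B_i)$ is a density Mathias condition. The only difference is cosmetic (you name the selector function $f$ explicitly), so there is nothing to add.
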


\begin{proof}
Since $(s, B)$ is a density Mathias condition there is some $\delta > 0$ and some infinite set $D$ such that $B$ is $\delta$-dense along $D$ and $D$ does not compute $X$. Now define subsets $D_1,\ldots, D_k$ of $D$ by
\[
  D_i = \{n \in D \mid i \text{ is least such that $B_i$ is $\delta/k$-dense at $n$}\}.
\]

We claim that $D_1,\ldots,D_k$ partition $D$. To see why, let $n \in D$. Thus $B\cap [n] \geq \delta\cdot(n + 1)$. Since $B_1,\ldots,B_k$ partition $B$, at least one of the sets
\[
  B_1\cap[n], B_2\cap[n], \ldots, B_k\cap[n]
\]
must have size at least $\delta\cdot(n + 1)/k$. Thus there is some $i$ such that $B_i$ is $\delta/k$-dense at $n$.

Since $D$ does not compute $X$ and $D_1,\ldots, D_k$ partition $D$, we can apply Corollary~\ref{cor:seetapun2} to Seetapun's theorem to get an infinite set $E$ which is contained in some $D_i$ and which does not compute $X$. It is straightforward to check that $B_i$ is $\delta/k$-dense along $E$ and thus $(s, B_i)$ is a density Mathias condition.
\end{proof}

\begin{lemma}
\label{lemma:main}
For any Turing functional $\Phi$ and density Mathias condition $(s, B)$, there is some density Mathias condition $(s', B') \leq (s, B)$ such that for any set $C$ compatible with $(s', B')$, $\Phi(C) \neq X$.
\end{lemma}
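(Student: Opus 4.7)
I would split into two cases based on whether we can extend the stem to force $\Phi$ to disagree with $X$ at some $n$, or whether we must instead shrink the reservoir to force $\Phi$ to diverge somewhere. In the first case, suppose there is a finite $s' \supseteq s$ with $s' \setminus s \subseteq B$ (so $\min(s' \setminus s) > \max(s)$ if nonempty) and some $n \in \N$ with $\Phi(s', n) \conv \neq X(n)$. I take $B' = B \setminus [0, \max(s')]$, which differs from $B$ by only finitely many elements and is therefore $(\delta/2)$-dense along a cofinite tail of $D$; this tail still fails to compute $X$, so $(s', B')$ is a density Mathias condition. Any $C$ compatible with $(s', B')$ satisfies $C \cap [0, \max(s')] = s'$, so $\Phi(C, n) = \Phi(s', n) \neq X(n)$.

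If no such extension exists, then for every finite $s' \supseteq s$ with $s' \setminus s \subseteq B$ and every $n$, $\Phi(s', n) \conv$ implies $\Phi(s', n) = X(n)$. My plan in this case is to force divergence. Since $X$ is uncomputable there is some $n$ with $\Phi(s, n)$ divergent, and for each such $n$ I introduce the $B$-computable set
\[
  T_n = \{ k \in B : k > \max(s),\ \exists \text{ finite } t \subseteq B \cap (\max(s), k],\ k = \max(t),\ \Phi(s \cup t, n) \conv \}.
\]
By Lemma~\ref{lemma:forcing_partition} applied to the partition $B = T_n \sqcup (B \setminus T_n)$, at least one of $(s, T_n)$ and $(s, B \setminus T_n)$ is a density Mathias condition. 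If for some such $n$ the latter holds, I take $(s', B') = (s, B \setminus T_n)$; any nonempty finite $t \subseteq B \setminus T_n$ with $\min(t) > \max(s)$ has $\max(t) \notin T_n$, forcing $\Phi(s \cup t, n) \uparrow$, and combined with $\Phi(s, n) \uparrow$ this yields $\Phi(C, n) \uparrow$ for every $C$ compatible with $(s', B')$.

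The main obstacle is the remaining subcase, where for every $n$ with $\Phi(s, n) \uparrow$ the partition lemma forces us into $(s, T_n)$ rather than $(s, B \setminus T_n)$. In that subcase the reservoir $B$ uniformly witnesses $X$ via the $T_n$'s, yet any density witness $D$ for $(s, B)$ must still not compute $X$. I expect to resolve this by refining to the three-piece partition $B = T_n^0 \sqcup T_n^1 \sqcup (B \setminus T_n)$, where $T_n^b$ collects the witnesses forcing $\Phi(s \cup t, n) \conv = b$, and observing that in the current subcase exactly one of $T_n^0, T_n^1$ is empty. Combining this finer partition with an iterative shrinking of $B$, I expect either to locate some $n$ for which $B \setminus T_n$ turns out to be density Mathias after all, or to force the density witness of the shrunken reservoir to compute $X$, contradicting the density Mathias hypothesis on $(s, B)$.
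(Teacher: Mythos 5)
Your Case 1 (extend the stem to force a wrong answer) matches the paper's first case. Your ``easy'' subcase of Case 2, where some $B\setminus T_n$ happens to be a density Mathias condition, is also correct: the point that every $C$ compatible with $(s, B\setminus T_n)$ has $\Phi(C,n)\uparrow$ is a clean observation. But the remaining subcase---which you rightly identify as the main obstacle---is left as a plan, and the plan as sketched has a gap that I do not think can be patched along the lines you indicate.

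Here is the problem. The sets $T_n$, $T_n^0$, $T_n^1$ are all $B$-computable (and more precisely $(s\oplus B)$-computable). In the obstacle case you would like to argue that ``the density witness of the shrunken reservoir computes $X$,'' but the density witness $D$ is some infinite set that is merely known not to compute $X$; there is no reason for $D$ to compute $B$, and hence no reason for $D$ to be able to tell which of $T_n^0$, $T_n^1$ is empty. The information you hope to extract lives in $B$, and $B$ itself is allowed to compute $X$ (indeed the paper's ``natural idea that doesn't work'' section shows $s\cup B$ really may compute $X$). So ``$B$ uniformly witnesses $X$'' is true but produces no contradiction, because the density Mathias hypothesis says nothing about the reservoir itself being weak. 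Relatedly, your strategy searches only among the candidate reservoirs $B\setminus T_n \subseteq B$; but the condition $(s',B')$ whose existence the lemma asserts need not be of this form, so this restricted search may simply come up empty even when the lemma holds.

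The paper escapes the loop by stepping \emph{outside} $B$. For each $n$ and $b$ it considers the $\Pi^0_1$ class $\CC_{n,b}$ of sets $Y\subseteq\N$---not required to be subsets of $B$---that are suitably dense along a chosen $D$, disjoint from previously chosen sets, and on which $\Phi(s\cup\cdot,n)$ never converges to anything other than $b$. If all the classes $\CC_{n,b}$ with $b\neq X(n)$ were empty, one could compute $X$ from the oracle that defines the classes (which, crucially, is \emph{not} $B$ but rather $B_0\oplus\cdots\oplus B_k\oplus D_{k+1}$, an oracle engineered to avoid $X$). So some $\CC_{n,b}$ with $b\neq X(n)$ is nonempty, and the cone avoiding basis theorem yields a member $B_{k+1}$ that is still weak. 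The final reservoir is $B\cap B_i$: $B$ forces ``agree or diverge,'' $B_i$ forces ``disagree or diverge,'' so the intersection forces divergence. Termination is guaranteed because the $B_i$ are pairwise disjoint and all $\delta/2$-dense along a common infinite set, so there can be at most $2/\delta$ of them. That disjointness bookkeeping, and the use of the cone avoiding basis theorem to stay weak, is exactly the machinery your sketch lacks and, I believe, cannot do without.
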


\begin{proof}
Since $(s, B)$ is a density Mathias condition, we can fix some $\delta > 0$ and infinite set $D$ such that $B$ is $\delta$-dense along $D$ and $D$ does not compute $X$. There are two cases to consider.
\begin{enumerate}
\item It is possible to make $\Phi$ wrong on some input: there is some $n \in \N$ and finite set $t\subseteq B$ such that $\Phi(s\cup t, n)\conv \neq X(n)$.
\item It is impossible to make $\Phi$ wrong on any input: for every $n \in \N$ and finite set $t\subseteq B$, $\Phi(s\cup t, n)$ either diverges or is equal to $X(n)$.
\end{enumerate}
The first case is easy: we can simply take $s' = s\cup t$ and $B' = B\setminus [\max(t)]$. Thus we may assume we are in the second case.

Since we are in the second case, it follows that for every $C$ compatible with $(s, B)$, $\Phi(C)$ never disagrees with $X$---though it may diverge on some inputs. Our goal is to find $(s', B')$ extending $(s, B)$ such that for every $C$ compatible with $(s', B')$, $\Phi(C)$ does diverge on at least one input.

\medskip\noindent\textbf{Intuition.}
Here's the basic idea of the proof. By using ideas from proofs of Seetapun's theorem, it is not too hard to find a set $B_0$ and number $n_0$ such that for all $C$ compatible with $(s, B_0)$, $\Phi(C, n_0) \neq X(n_0)$. It is also not too hard to ensure $B_0$ is $\delta$-dense along $D$. We would like to take $B' = B\cap B_0$. The reason is that any $C$ compatible with $(s, B\cap B_0)$ is compatible with both $(s, B)$ and $(s, B_0)$ and thus for such a $C$, $\Phi(C, n_0)$ is neither different from $X(n_0)$ (because $C$ is compatible with $(s, B)$) nor equal to $X(n_0)$ (because $C$ is compatible with $(s, B_0)$) and thus $\Phi(C, n_0)$ must diverge.

The problem with this idea is that $B\cap B_0$ might not be very dense---in fact, it might even be empty. So instead, we will iterate: we will find another set $B_1$ and number $n_1$ which have the same properties as $B_0$ and $n_0$ but such that $B_1$ is disjoint from $B_0$. If neither $B\cap B_0$ nor $B\cap B_1$ work then we will keep going and find $B_2$ disjoint from both, and so on.

The fact that sets are disjoint and all fairly dense will ensure that this process cannot go on forever: it is not possible to have more than $1/\delta$ disjoint sets which are all $\delta$-dense. Thus we will eventually find some $B_i$ which works. In practice, carrying out this idea involves a lot of additional technical details.

\medskip\noindent\textbf{General strategy.}
We will find disjoint sets $B_0, \ldots, B_k$ and numbers $n_0, \ldots, n_k$ such that
\begin{enumerate}
\item for each $i \leq k$ and all $C$ compatible with $(s, B_i)$, $\Phi(C, n_i) \neq X(n_i)$ (i.e.\ it may diverge or it may converge to some output that is not equal to $X(n_i)$)
\item and $(s, B\cap (B_0\cup\ldots\cup B_k))$ is a density Mathias condition.
\end{enumerate}
Before explaining how to find the $B_i$'s, let's explain why this is enough to finish the proof. Since $(s, B\cap (B_0\cup \ldots \cup B_k))$ is a density Mathias condition whose reservoir is partitioned by $B\cap B_0,\ldots,B\cap B_k$, Lemma~\ref{lemma:forcing_partition} implies that there is some $i$ such that $(s, B\cap B_i)$ is a density Mathias condition.

We now claim that we can take $s' = s$ and $B' = B\cap B_i$. To see why, note that if $C$ is compatible with $(s, B\cap B_i)$ then it is compatible with both $(s, B)$ and $(s, B_i)$. Thus $\Phi(C, n_i)$ can neither disagree with $X(n_i)$ (since $C$ is compatible with $(s, B)$) nor agree with $X(n_i)$ (since $C$ is compatible with $(s, B_i)$) and therefore $\Phi(C, n_i)$ must diverge.

\medskip\noindent\textbf{Strategy to construct $\bm{B_0, \ldots, B_k}$.} 
We will build the sequence of $B_i$'s inductively. To make the induction work, we will require the sets $B_i$ to satisfy some additional properties. In particular, we will construct a sequence of sets $B_0, B_1, B_2,\ldots$ along with sets $D \supseteq D_0 \supseteq D_1 \supseteq D_2 \supseteq \ldots$ and numbers $n_0, n_1, n_2,\ldots$ such that for each $i$,
\begin{enumerate}
\item $D_i$ is infinite.
\item $B_i$ is $\delta/2$-dense along $D_i$.
\item $B_0\oplus B_1\oplus\ldots\oplus B_i\oplus D_i$ does not compute $X$.
\item $B_i$ is disjoint from $B_0\cup\ldots\cup B_{i - 1}$
\item For each $C$ compatible with $(s, B_i)$, $\Phi(C, n_i) \neq X(n_i)$.
\end{enumerate}
We will show by induction that if we have built sequences $B_0,\ldots,B_k$, $D_0,\ldots, D_k$ and $n_0,\ldots,n_k$ satisfying these requirements then either $(s, B\cap (B_0\cup\ldots\cup B_k))$ is a density Mathias condition (and thus we are finished building the sequence) or we can extend the sequence---i.e.\ find $B_{k + 1}, D_{k + 1}$ and $n_{k + 1}$ satisfying the requirements. To finish, we will show that any sequence satisfying the requirements cannot be infinite.

\medskip\noindent\textbf{Extending the sequence: picking $\bm{D_{k + 1}}$.}
Suppose that we have built sequences $B_0,\ldots,B_k$, $D_0,\ldots, D_k$ and $n_0,\ldots,n_k$ satisfying the five requirements listed above. If we knew that $B\cap(B_0\cup\ldots\cup B_k)$ was dense along $D_k$ then we could stop: in that case $(s, B\cap(B_0\cup\ldots\cup B_k))$ would be a density Mathias condition. More generally, if we knew that $B\cap(B_0\cup\ldots\cup B_k)$ was dense along any infinite subset of $D_k$ which does not compute $X$ then we could stop. So let's assume that's not the case and show we can extend the sequence $B_0,\ldots, B_k$.

Let $E$ be the subset of $D_k$ defined by
\[
  E = \{n \in D_k \mid B\cap (B_0 \cup \ldots \cup B_k) \text{ is $\delta/2$-dense at } n\}.
\]
By (the relativized form of) Corollary~\ref{cor:seetapun2} to Seetapun's theorem, there is an infinite subset $D_{k + 1}$ of either $E$ or $D_k \setminus E$ such that $B_0\oplus\ldots \oplus B_k\oplus D_{k + 1}$ does not compute $X$. However, $D_{k + 1}$ cannot be a subset of $E$ since then $B\cap(B_0\cup\ldots\cup B_k)$ would be dense along $D_{k + 1}$, which contradicts our assumption above.

Thus $D_{k + 1} \subseteq D_k\setminus E$. This implies $B\setminus (B_0\cup\ldots\cup B_k)$ is $\delta/2$-dense along $D_{k + 1}$. To see why, consider any $n \in D_{k + 1}$. Since $D_{k + 1} \subseteq D_k \subseteq D$ and $B$ is $\delta$-dense along $D$, $B$ is $\delta$-dense at $n$. Since $n \notin E$, $B\cap (B_0\cup\ldots\cup B_k)$ is not $\delta/2$-dense at $n$. Therefore $B\setminus (B_0\cup\ldots\cup B_k)$ must be $\delta/2$-dense at $n$.

\medskip\noindent\textbf{Extending the sequence: picking $\bm{B_{k + 1}}$.}
Observe that $B\setminus (B_0\cup\ldots\cup B_k)$ has the following properties.
\begin{enumerate}
\item As we just showed, it is $\delta/2$-dense along $D_{k + 1}$.
\item It is disjoint from $B_0\cup\ldots\cup B_k$.
\item Since it is a subset of $B$, for every finite subset $t$ and every $n \in \N$, $\Phi(s\cup t, n)$ either diverges or is equal to $X(n)$.
\end{enumerate}
These properties look almost like what we want from $B_{k + 1}$ but in the last item above we would like to have ``not equal to $X(n)$'' rather than ``equal to $X(n)$.'' In other words, we want to find a set which is very similar to $B\setminus (B_0\cup\ldots\cup B_k)$ but differs in one key respect. We will argue that if we cannot find such a set then $X$ is computable from $B_0\oplus \ldots\oplus B_k\oplus D_{k + 1}$.

For each $n \in \N$ and $b \in \{0, 1\}$, define a set $\CC_{n, b} \subseteq \powerset(N)$ by
\begin{align*}
  \CC_{n, b} = \{ Y \subseteq \N \mid\;
  &Y \text{ is disjoint from } B_0\cup\ldots\cup B_k\\
  &\text{and } Y \text{ is $\delta/2$-dense along } D_{k + 1}\\
  &\text{and for all finite $t\subseteq Y$, either $\Phi(s\cup t, n)\!\uparrow$ or $\Phi(s\cup t, n)\conv = b$} \}.
\end{align*}
Note that each $\CC_{n, b}$ is a $\Pi^0_1$ class relative to $B_0\oplus\ldots\oplus B_k\oplus D_{k + 1}$. Thus for each $(n, b)$, there is some $B_0\oplus\ldots\oplus B_k\oplus D_{k + 1}$-computable binary tree $T_{n, b}$ whose paths are exactly the elements of $\CC_{n, b}$. Furthermore, it is easy to see that $T_{n, b}$ can be computed uniformly in $(n, b)$.

We would now like to show that for some $n$ and $b \neq X(n)$, $\CC_{n, b}$ is nonempty. Suppose not. Then for each $n \in \N$ and $b\in\{0, 1\}$, we have the following
\begin{enumerate}
\item If $X(n) = b$ then $\CC_{n, b}$ is nonempty (as witnessed by $B\setminus (B_0\cup\ldots\cup B_k)$) and thus $T_{n, b}$ is infinite.
\item If $X(n) \neq b$ then $\CC_{n, b}$ is empty and so, by K\"onig's lemma, $T_{n, b}$ is finite.
\end{enumerate}
Therefore to compute $X(n)$ using $B_0\oplus\ldots\oplus B_k\oplus D_{k + 1}$, we simply need to check which of $T_{n, 0}$ and $T_{n, 1}$ is finite.

Thus there is some $n$ and $b \neq X(n)$ such that $\CC_{n, b}$ is nonempty. By the cone avoiding basis theorem, relativized to $B_0\oplus\ldots\oplus B_k\oplus D_{k + 1}$, we can find some $B_{k + 1} \in \CC_{n, b}$ such that
\[
\left(B_0\oplus\ldots\oplus B_k\oplus D_{k + 1}\right)\oplus B_{k + 1} \ngeq_T X.
\]
It is straightforward to check that this $B_{k + 1}$ satisfies all the necessary requirements.

\medskip\noindent\textbf{What about the base case?}
Since we are forming the sequences $B_0, B_1,\ldots$ and $D_0,D_1,\ldots$ inductively, it would seem that we need to explain the base case: how to pick $B_0$, $D_0$ and $n_0$. However, the argument we gave in the inductive case to pick $B_{k + 1}$ and $D_{k + 1}$ works just as well for the base case, with the exception that instead of having to pick $D_0$ carefully we can just take $D_0 = D$. Also in this case, we should interpret the union $B_0\cup\ldots\cup B_k$ as the empty set.

\medskip\noindent\textbf{The sequence cannot be infinite.}
We have shown that if we have formed sets $B_0,\ldots,B_k$ and $D_0,\ldots,D_k$ satisfying the five conditions listed above then either $(s, B\cap(B_0\cup\ldots\cup B_k))$ is a density Mathias condition (in which case we are done) or we can find $B_{k + 1}$ and $D_{k + 1}$ extending the sequence. But how do we know the first option ever holds? In other words, how do we know we cannot just extend the sequence indefinitely? We will now show that no sequence satisfying the five requirements can be longer than $2/\delta$.

Suppose for contradiction that $B_0,\ldots,B_k$ and $D_0,\ldots,D_k$ do satisfy the five requirements and $k > 2/\delta$. Let $n$ be any element of $D_k$. Note that for each $i \leq k$, $D_k \subseteq D_i$ and hence $B_i$ is $\delta/2$-dense at $n$. In other words,
\[
  |B_i\cap [n]| \geq \frac{\delta}{2}\cdot(n + 1).
\]
However, since the $B_i$'s are all disjoint, this gives us $k > 2/\delta$ disjoint subsets of $[n]$, all of size at least $\delta/2\cdot(n + 1)$, which is impossible.
\end{proof}

We can now prove Theorem \ref{thm:main}.

\main*

\begin{proof}
Let $G$ be a generic for density Mathias forcing which is compatible with the condition $(\0, A)$. We claim that $G$ is an infinite subset of $A$ which does not compute $X$. Since $G$ is generic, it is infinite and since $G$ is compatible with $(\0, A)$, $G \subseteq A$. Now let $\Phi$ be any Turing functional. By Lemma~\ref{lemma:main}, the following set of density Mathias conditions is dense
\[
  \{(s, B) \mid \text{for all $C$ compatible with $(s, B)$, $\Phi(C) \neq X$}\}
\]
and so $\Phi(G) \neq X$. Since this holds for all $\Phi$, $G$ does not compute $X$.
\end{proof}

\subsection{An open question}

Monin and Patey have proved the following variation on Seetapun's theorem, in which computability is replaced by hyperarithmetic reducibility~\cite{monin2021weakness}.

\begin{theorem}[Monin and Patey]
For any non-hyperarithmetic set $X$ and any set $A \subseteq \N$, there is an infinite subset $B$ of either $A$ or $\bar{A}$ such that $X$ is not hyperarithmetic in $B$.
\end{theorem}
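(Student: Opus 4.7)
The plan is to adapt the proof of Seetapun's theorem (Theorem~\ref{thm:seetapun}), replacing Turing reducibility by hyperarithmetic reducibility throughout. The two ingredients of Seetapun's proof are Mathias forcing with reservoirs that do not compute $X$, and the cone-avoiding basis theorem for $\Pi^0_1$ classes used in the key density step. Both admit natural hyperarithmetic analogues: we force with reservoirs that do not hyperarithmetically compute $X$, and in place of cone-avoidance we use the \emph{hyperarithmetic basis theorem}: if $X \not\leq_h Z$, then every nonempty $\Pi^{1,Z}_1$ class has a member $Y$ with $X \not\leq_h Z\oplus Y$. This is a classical consequence of the Gandy basis theorem together with the fact that $\{Y : X\leq_h Y\}$ is $\Sigma^1_1$.

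I would force with two-sided Mathias conditions $(s_0, s_1, R)$ where $s_0 \subseteq A$ and $s_1 \subseteq \bar{A}$ are finite, $R$ is infinite with $\max(s_0 \cup s_1) < \min R$, and $X$ is not hyperarithmetic in $R$. Extensions add finitely many elements of $R\cap A$ to $s_0$ or of $R\cap \bar A$ to $s_1$ and shrink $R$ correspondingly. A sufficiently generic filter produces sets $G_0\subseteq A$ and $G_1\subseteq \bar{A}$; whichever of $A$ and $\bar A$ is infinite, the corresponding $G_i$ will be infinite by a standard density argument, and the goal is to force that this $G_i$ does not hyperarithmetically compute $X$.

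The heart of the proof is a density lemma: for any pair of hyperarithmetic functionals $(\Psi_0, \Psi_1)$ and any condition $(s_0, s_1, R)$, some extension forces $\Psi_0(G_0)\neq X$ or $\Psi_1(G_1)\neq X$. Given the condition, first look for some $i$, $n$, and finite $t \subseteq R$ on the $i$-side with $\Psi_i(s_i\cup t, n)\conv \neq X(n)$; if found, add $t$ to $s_i$ and restrict $R$. Otherwise, for each $n$ and $b \in \{0,1\}$ consider the $\Pi^{1,R}_1$ class
\[
  \mathcal C_{n,b} = \{\,(Y_0, Y_1) : Y_0\subseteq R\cap A,\; Y_1\subseteq R\cap\bar A,\; \text{and}\; \forall \text{ finite } t\subseteq Y_i,\; \Psi_i(s_i\cup t, n) \text{ diverges or equals } b\,\}.
\]
If $\mathcal C_{n, 1-X(n)}$ were empty for every $n$, then $X(n)$ would be computable hyperarithmetically from $R$ by checking which of $\mathcal C_{n,0}$ and $\mathcal C_{n,1}$ is nonempty (emptiness of a $\Pi^{1,R}_1$ class being $\Sigma^{1,R}_1$), contradicting $X\not\leq_h R$. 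Hence some $\mathcal C_{n,b}$ with $b\neq X(n)$ is nonempty, and the hyperarithmetic basis theorem supplies a member $(Y_0, Y_1)$ with $X\not\leq_h R\oplus Y_0\oplus Y_1$; setting the new reservoir to $Y_0\cup Y_1$ yields the required extension.

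The main obstacle is the ordinal-indexed nature of hyperarithmetic reducibility: the collection of hyperarithmetic functionals is not computably enumerable, so combining the density lemmas for all $\Psi$ into a single generic construction requires care. A transfinite iteration of length $\omega_1^{CK}$, or effective-forcing techniques in the spirit of Monin and Patey's work (approximating $\Pi^1_1$ objects by their $\alpha$-th $\mathcal{O}$-computable approximations), is likely needed to ensure that the generic meets dense sets for every hyperarithmetic functional while the resulting $G_i$ itself does not hyperarithmetically compute $X$. Verifying that the $\Pi^1_1$ class $\mathcal C_{n,b}$ has the claimed complexity uniformly in a notation for $\Psi_i$, and that the basis theorem can be applied without blowing up the complexity of the reservoir, is where the technical work concentrates.
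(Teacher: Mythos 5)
This statement is not proved in the paper at all: it is quoted from Monin and Patey~\cite{monin2021weakness} as context for an open question, and the authors explicitly remark that the hyperarithmetic analogue of their own Theorem~\ref{thm:main} ``seems to be beyond the techniques we used to prove our theorem.'' So there is no proof in the paper against which to compare your attempt, and the implicit message of the paper is that this is \emph{not} a routine adaptation of the Seetapun-style argument.

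There is also a concrete gap in your sketch. The ``hyperarithmetic basis theorem'' you invoke---that every nonempty $\Pi^{1,Z}_1$ class has a member $Y$ with $X \not\leq_h Z \oplus Y$ whenever $X \not\leq_h Z$---is false. The Gandy basis theorem, and the version with hyperarithmetic cone avoidance that you are implicitly using, applies to $\Sigma^1_1$ classes, not $\Pi^1_1$ classes. For $\Pi^1_1$ classes it fails: $\{\mathcal{O}\}$ is a $\Pi^1_1$ singleton whose unique member is Kleene's $\mathcal{O}$, so for $X = \mathcal{O}$ (or any non-hyperarithmetic $X \leq_h \mathcal{O}$) every member of the class computes $X$ hyperarithmetically. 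More generally there are $\Pi^1_1$ singletons of arbitrarily high hyperdegree. Your $\CC_{n,b}$, if defined uniformly over all hyperarithmetic functionals (by quantifying over notations), really does sit at the $\Pi^1_1$ level, so this mismatch is not cosmetic. And if instead you fix a single notation $a$ so that the class becomes $\Delta^1_1(R)$, you run head-on into the difficulty you flag in your last paragraph---coordinating the construction across an $\omega_1^{CK}$-length hierarchy of functionals while keeping the generic's hyperdegree under control---which is precisely where Monin and Patey had to introduce genuinely new machinery (a forcing framework built around $\Sigma^1_1$ classes and a careful analysis of the hyperjump along the construction). Acknowledging this obstacle is not the same as resolving it; as written, the sketch proves neither the density lemma at the right level of the hierarchy nor the passage to a single generic.
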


It seems natural to ask whether the analogous variation on Theorem~\ref{thm:main} is true. Answering this question seems to be beyond the techniques we used to prove our theorem.

\begin{question}
Suppose $X$ is not hyperarithmetic and $A \subseteq \N$ has positive lower density. Must $A$ have an infinite subset $B$ such that $X$ is not hyperarithmetic in $B$?
\end{question}

\section{The main theorem is sharp}
\label{sec:sharp}

Our goal in proving Theorem~\ref{thm:main} was to formalize the intuition that if every infinite subset of a set $A$ can compute an uncomputable set $X$ then $A$ must be sparse. In our theorem, we interpreted sparse to mean lower density zero. However, this is not the only reasonable interpretation of what it means for a set of natural numbers to be sparse. In this section, we will consider a couple other notions of sparsity and give counterexamples showing that for each, our main theorem becomes false.

In other words, for various notions of sparsity, we will prove that there is an uncomputable $X$ and a set $A \subseteq \N$ which is not sparse such that all infinite subsets of $A$ compute $X$. In nearly all cases, we will be able to strengthen the counterexample in two ways.

First, instead of just finding a single $X$ that can be encoded into a non-sparse set in this way, we will show that any set $X$ can be so encoded---i.e.\ for every $X$ there is some non-sparse $A \subseteq \N$ such that all infinite subsets of $A$ compute $X$.

Second, instead of every subset of $A$ simply computing $X$, we can ensure that they all compute $X$ \term{uniformly}---i.e.\ there is a single Turing functional $\Phi$ such that for all infinite subsets $B$ of $A$, $\Phi(B) = X$.

\subsection{Sets of positive upper density}

Our first counterexample concerns replacing positive lower density with positive upper density. In this case, we can even find a counterexample where $A$ has upper density one.

\begin{proposition}
For any set $X$, there is some set $A\subseteq \N$ such that $\upperd(A) = 1$ and all infinite subsets of $A$ compute $X$ uniformly.
\end{proposition}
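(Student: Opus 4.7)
My plan is to construct $A$ as a disjoint union of ``macro-blocks'' at super-exponentially growing scales, designed so that the magnitude of any single element of $A$ already pins down an initial segment of $X$. For $k \geq 1$ set $g(k) = 2^{2k} + k \cdot \mathrm{int}(X \uh k)$ (where $\mathrm{int}$ sends the $k$-bit string $X \uh k$ to its integer value in $[0, 2^k)$) and $B_k = [2^{g(k)}, 2^{g(k)+k})$, and let $A = \bigcup_{k \geq 1} B_k$. Since $\mathrm{int}(X \uh k) < 2^k$ and $k \cdot 2^k < 2^{2k}$, one checks $g(k) + k < 2^{2k+1} \leq g(k+1)$, so the $B_k$ are pairwise disjoint and $g$ is strictly increasing.

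To verify $\upperd(A) = 1$, observe that $|B_k| = 2^{g(k)+k} - 2^{g(k)}$, while the union of all smaller blocks has size at most $2 \cdot 2^{g(k-1) + k - 1} \leq 2 \cdot 2^{g(k)/2}$ (using $g(k-1) + k - 1 < g(k)/2$ for $k \geq 2$), which is negligible compared with $|B_k|$. Hence at $n_k = 2^{g(k)+k} - 1$ the ratio $|A \cap [0, n_k]|/(n_k + 1)$ is at least $1 - 2^{-k} - o(1)$, which tends to $1$. For the uniform decoder $\Phi$: given infinite $B \subseteq A$ and $n \in \N$, find any $x \in B$ with $x \geq 2^{2^{2(n+1)}}$ (which exists by infiniteness of $B$) and set $s = \lfloor \log_2 x \rfloor$. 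Because $x$ lies in a unique $B_k$ with $k > n$ and $[g(k), g(k)+k) \subseteq [2^{2k}, 2^{2k+1})$, one reads off $k = \lfloor \log_2 s \rfloor / 2$ immediately. Then $s - 2^{2k} = k \cdot \mathrm{int}(X \uh k) + t$ for some $t \in [0, k)$, so integer division by $k$ cleanly extracts $\mathrm{int}(X \uh k)$, hence $X \uh k$, hence $X(n)$. This is a single Turing functional independent of $X$.

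The main delicate point is encoding $X \uh k$ in the one integer $g(k)$ so that the payload survives being observed only as the noisy value $g(k) + t$ for an unknown $t < k$; the ``multiply by $k$'' trick solves this by placing the payload on positions divisible by $k$ so that division with remainder cleanly separates it from $t$. The other crucial design choice is the macro-block width $m_k = k$: letting this grow with $k$ is exactly what drives the density $1 - 2^{-k} \to 1$, whereas any bounded width would cap the upper density strictly below $1$.
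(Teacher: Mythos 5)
Your proof is correct, and it takes a genuinely different route from the paper's. The paper proves this proposition as a black-box reduction: it starts from an auxiliary set $\tilde{A}$ already known to have the coding property (e.g.\ the Dekker--Myhill set of initial segments of $X$), fixes a fast-growing computable scale $n_0 < n_1 < \cdots$ with $n_{i+1} > i \cdot n_i$, and blows up each $i \in \tilde{A}$ into the full interval $[n_i, n_{i+1})$; the density near $n_{i+1}$ is then $\geq \frac{i-1}{i}$, and any infinite subset of $A$ computes an infinite subset of $\tilde{A}$ by reading off which intervals it meets. Your construction is instead from scratch: you encode $\mathrm{int}(X\uh k)$ directly into the exponent $g(k)$ of the left endpoint of the $k$-th block $B_k = [2^{g(k)}, 2^{g(k)+k})$, and the ``multiply by $k$'' trick makes the payload recoverable by integer division even though the decoder only sees $g(k)$ up to an additive error $t < k$. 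This avoids invoking Dekker--Myhill (at the cost of a little more arithmetic bookkeeping), but both proofs rest on the same underlying idea -- the order of magnitude of any single element of $A$ pins down the block, and the block's position encodes an initial segment of $X$. Your density calculation is correct (the ratio at $n_k = 2^{g(k)+k}-1$ is $\geq 1 - 2^{-k}$, with no further error term needed since the earlier blocks only help); the stated auxiliary inequality $g(k-1)+k-1 < g(k)/2$ is actually false as written (the left side can exceed $2^{2k-1}+k-1$ while the right side is only guaranteed to be $\geq 2^{2k-1}$), but you never use it, so the argument is unaffected.
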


\begin{proof}
Let $\tilde{A}$ be any infinite set, all of whose infinite subsets compute $X$ uniformly (for example, $\tilde{A}$ could be the set produced by the Dekker-Myhill method explained in the introduction). The idea is to build $A$ by copying $\tilde{A}$, but add in a lot of redundancy so that $A$ is occasionally very dense.

To that end, pick a computable sequence of numbers $n_0 < n_1 < n_2 < \ldots$ which grows fast enough that $n_{i + 1} > i\cdot n_i$ and define
\[
 A = \bigcup_{i \in \tilde{A}}[n_i, n_{i + 1}).
\]
In other words, for each $i \in \tilde{A}$, $A$ contains the entire interval $[n_i, n_{i + 1})$.

First observe that $A$ has upper density $1$. Indeed, for each $i \in \tilde{A}$, $A$ is $\frac{i - 1}{i}$-dense at $n_{i + 1}$.

Second, observe that all infinite subsets of $A$ compute $X$ uniformly: given an infinite subset $B\subseteq A$, we can uniformly compute the following infinite subset $\tilde{B}$ of $\tilde{A}$,
\[
  \tilde{B} = \{ i \mid \exists n \in [n_i, n_{i + 1})\cap A\},
\]
and then use $\tilde{B}$ to compute $X$.
\end{proof}

In spite of this counterexample, our main theorem can be strengthened to hold for certain sets of positive upper density.

\begin{proposition}
For any uncomputable $X$ and set $A\subseteq \N$ such that $A$ is dense along some infinite set $D$ which does not compute $X$, there is some infinite subset of $A$ which does not compute $X$.
\end{proposition}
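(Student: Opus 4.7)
The plan is to observe that the proof of Theorem~\ref{thm:main} actually proves this stronger statement almost verbatim: positive lower density of $A$ was only ever used to verify that $(\0, A)$ is a density Mathias condition, and under the new hypothesis this verification is immediate.

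In more detail, the first step is to note that $A$ is infinite: since $A$ is $\delta$-dense at every $n \in D$ for some $\delta > 0$ and $D$ is infinite, the quantity $|A \cap [n]|$ grows without bound. Hence $(\0, A)$ is a legitimate Mathias condition, and by hypothesis it is witnessed to be a density Mathias condition by the set $D$ itself (which is infinite, along which $A$ is dense, and which does not compute $X$). Contrast this with the original proof, where the witness was $\N \setminus [\min(A)]$ and positive lower density of $A$ was used to supply the constant $\delta$.

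Next, I would take $G$ to be sufficiently generic for density Mathias forcing, with the generic filter containing $(\0, A)$. As in the proof of Theorem~\ref{thm:main}, $G$ is infinite by genericity and $G \subseteq A$ by compatibility with $(\0, A)$. For each Turing functional $\Phi$, Lemma~\ref{lemma:main} (whose proof made no use of positive lower density of the original set $A$, only of the density Mathias condition $(s,B)$ at hand) supplies a dense set of conditions forcing $\Phi(C) \neq X$ for all $C$ compatible with the condition. Hence $\Phi(G) \neq X$ for every $\Phi$, so $G$ does not compute $X$.

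There is no real obstacle here, since Lemmas~\ref{lemma:forcing_partition} and~\ref{lemma:main} were already formulated purely in terms of density Mathias conditions. The only point worth double-checking is that the inductive construction inside Lemma~\ref{lemma:main} still terminates: that argument bounded the length of the sequence $B_0, B_1, \ldots$ by $2/\delta$, where $\delta$ came from the witness to the density Mathias condition $(s,B)$; since our starting witness is the given pair $(\delta, D)$, the same bound applies, and no modification is needed. Thus the conclusion of Theorem~\ref{thm:main} goes through under the weaker hypothesis, yielding the desired infinite subset of $A$ that does not compute $X$.
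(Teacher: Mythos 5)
Your proof is correct and takes essentially the same approach as the paper: both observe that $D$ itself witnesses that $(\0, A)$ is a density Mathias condition, and then invoke the proof of Theorem~\ref{thm:main} verbatim. Your added checks (that $A$ is infinite, and that Lemmas~\ref{lemma:forcing_partition} and~\ref{lemma:main} never used positive lower density of the ambient set) are sound and make explicit what the paper leaves implicit.
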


\begin{proof}
Note that $(\0, A)$ is a density Mathias condition, as witnessed by $D$. Thus there is a generic $G$ for density Mathias forcing which is compatible with $(\0, A)$. Exactly as in the proof of Theorem~\ref{thm:main}, $G$ is an infinite subset of $A$ which does not compute $X$.
\end{proof}

\subsection{Sets whose density goes to zero slowly}

Our next counterexample concerns sets of lower density zero where the density approaches $0$ very slowly. Define the \term{density function} of a set $A$ to be the function
\[
  d_A(n) = \frac{|A\cap[n]|}{n + 1}.
\]

Note that if a set $A$ has positive lower density then $d_A(n)$ is bounded away from $0$. By contrast, most of the methods we have seen so far of coding a set $X$ into all infinite subsets of a set produce sets whose density functions converge to $0$ very rapidly. For example, the Dekker-Myhill method we mentioned in the introduction produces a set $A$ such that $d_A(n)$ is approximately $\frac{\log(n)}{n}$. In the previous subsection, we saw a method for which this is not quite true---in particular, we saw a method which produces a set $A$ of upper density one, which implies that $d_A(n)$ is close to $1$ infinitely often. However, even for this method, $d_A(n)$ is also infinitely often smaller than $\frac{\log(n)}{n}$.


Based on these examples, one might guess that if all infinite subsets of a set $A$ compute an uncomputable set $X$ then $d_A(n)$ cannot be lower bounded by any monotone function that goes to zero much slower than $\frac{\log(n)}{n}$---in other words that there must be infinitely many places where the density of $A$ is exponentially small. However, this is false: we will show that it is possible to encode any set $X$ into all infinite subsets of a set $A$ whose density function goes to zero arbitrarily slowly.

\begin{definition}
For any function $f \colon \N \to [0, 1]$, a set $A \subseteq \N$ is \term{$f$-dense} if for all $n$,
\[
  d_A(n) \geq f(n).
\]
\end{definition}

\begin{proposition}
Suppose $f \colon \N \to [0, 1]$ is a function such that
\[
  \lim_{n \to \infty} f(n) = 0.
\]
Then for any set $X$ there is some $A$ such that $A$ is $f$-dense and all infinite subsets of $A$ compute $X$ uniformly.
\end{proposition}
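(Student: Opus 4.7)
The plan is to extend the block-coding strategy of the previous proposition so that each block encodes a longer and longer initial segment of $X$, rather than a single bit, making the encoding redundant enough that any infinite subset of $A$ recovers $X$ in full. The sparsity of $A$ then becomes a parameter I can tune via the block sizes.

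Concretely, I would fix a computable partition $\N = \bigsqcup_k B_k$ with $B_k = [c_k, c_{k+1})$ and a computable function $g\colon \N \to \N$ with $g(k) \to \infty$, arranging that $|B_k| = M_k \cdot 2^{g(k)}$ for some positive integer $M_k$. Partition $B_k$ by residues modulo $2^{g(k)}$ into parts $P^k_0, P^k_1, \ldots, P^k_{2^{g(k)}-1}$, each of size $M_k$, and define
\[
    A \cap B_k = P^k_{\mathrm{val}(X\restriction g(k))},
\]
where $\mathrm{val}$ sends a binary string of length $g(k)$ to the integer in $[0, 2^{g(k)})$ with that binary representation.

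For uniform recovery, the Turing functional $\Phi$ works as follows: on input $(B, m)$, search for any $n \in B$ whose containing block $B_k$ satisfies $g(k) > m$; compute $k$ from $n$ and then $i = (n - c_k) \bmod 2^{g(k)}$; and output the $m$th bit of the binary expansion of $i$. Any such $n$ lies in $A \cap B_k = P^k_{\mathrm{val}(X\restriction g(k))}$, so $i = \mathrm{val}(X \restriction g(k))$ and its $m$th bit is $X(m)$. Because $B$ is infinite but each $B_k$ is finite, $B$ meets infinitely many blocks and a suitable $n$ is always found from the oracle; thus $\Phi$ is defined uniformly in $B$ (and in $X$).

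For the $f$-density, I would choose $c_k$ and $g(k)$ inductively. With the interleaved partition, $|A \cap [n]| \geq T_{k-1}$ for all $n \in B_k$, where $T_{k-1} = \sum_{j < k} M_j$, and this bound is essentially tight just inside a block before the interleaving has contributed much. At each stage, pick $c_{k+1}$ as large as permitted by the constraint $T_{k-1}/(n+1) \geq f(n)$ on $B_k$, and then choose $g(k) \geq g(k-1)$ as large as possible subject to $|B_k| = c_{k+1} - c_k$ still being a positive multiple of $2^{g(k)}$. The main obstacle is the case where $f$ decays very slowly, so that $f(n)(n+1) \to \infty$ and $T_k$ must itself grow without bound while we still force $g(k) \to \infty$. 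The key point is that the allowed block size grows roughly like $T_{k-1}/f(c_k)$, so the allowed $g(k)$ is of order $\log_2(1/f(c_k))$ up to constants, and this tends to infinity since $f(n) \to 0$.
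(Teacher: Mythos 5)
Your encoding scheme is essentially the paper's---block by block, write a growing prefix of $X$ into a congruence class modulo a power of two---but there is a genuine gap in the decoding step, and it is precisely the subtlety the paper flags. The statement places no computability hypothesis on $f$. In your construction the block boundaries $c_k$ and the depth function $g(k)$ are determined inductively by the values of $f$, so there is no reason for the sequences $(c_k)$ and $(g(k))$ to be computable; declaring at the outset that the partition and $g$ are computable is inconsistent with choosing them to satisfy the $f$-density constraint. Yet your proposed functional $\Phi$, on finding $n \in B$, must ``compute $k$ from $n$,'' look up $g(k)$, and reduce $n - c_k$ modulo $2^{g(k)}$. None of this is available to a Turing functional that is not given the block structure as an oracle, so as written $\Phi$ is not a Turing functional at all.

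The paper solves exactly this problem by making the per-element encoding self-delimiting, so the decoder never needs to know which block it is in. Instead of recording $\mathrm{val}(X\uh g(k))$ as a bare residue whose modulus the decoder must somehow divine, the paper puts $m$ into $A$ on the $i$th block precisely when the binary expansion of $m$ ends in the string $(X\uh i)\concat 1\concat 0^i$. The suffix $1\concat 0^i$ is a marker: from $m$ alone, count the trailing zeros to recover $i$, check the next bit is $1$, and then read off the $i$ bits of $X\uh i$. No knowledge of $c_k$, $g(k)$, or even $k$ is needed---everything is recovered from the element itself, and the (non-computable) choice of block boundaries affects only which elements appear in $A$, not how they are decoded. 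You could repair your argument along these lines by, say, fixing $g(k)=2k+1$ and embedding the marker into the residue, at which point your construction becomes the paper's; but without some such self-delimiting device the recovery procedure does not go through for non-computable $f$.
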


\begin{proof}
The idea is to modify the Dekker-Myhill coding method explained in the introduction, but slow it down so that it produces an $f$-dense set. In Dekker and Myhill's scheme, each element of $A$ encodes one more bit of $X$ than the previous element. In our modified version, we will repeatedly encode the same number of bits of $X$ until $f$ becomes small enough to allow us to encode more. Roughly speaking, we will only start using elements of $A$ to encode the first $n$ bits of $X$ once $f$ drops below $1/2^n$. 

One comment before we go into the details of the construction: it might seem necessary to require $f$ to be computable but there is a trick that allows us to avoid requiring that. Essentially, we can use elements of $A$ to encode not just some bits of $X$ but also to encode how many bits are encoded.

To define $A$ formally, first pick a sequence $0 < n_1 < n_2 < \ldots$ which grows fast enough that for all $i$:
\begin{enumerate}
\item for all $m \geq n_i$, $f(m) \leq 1/(5\cdot 2^{2i + 1})$
\item and $n_{i + 1} - n_i$ is divisible by $2^{2i + 1}$.
\end{enumerate}
The idea is that in the interval $[n_i, n_{i + 1})$, each element of $A$ will encode the first $i$ bits of $X$. Note that the sequence $n_0, n_1, n_2, \ldots$ is not required to be computable.

Next, for each $i$, let $\sigma_i$ denote the length $2i + 1$ binary string consisting of the first $i$ bits of $X$, followed by a $1$, followed by a $i$ zeros---i.e.
\[
  \sigma_i = (X\uh i)\concat 1\concat \underbrace{00\ldots 0}_{i \text{ times}}.
\]
Now define $A$ as follows.
\begin{enumerate}
\item First put $[0, n_1)$ into $A$.
\item Next, for each $i \geq 1$ and $m \in [n_i, n_{i + 1})$, put $m$ into $A$ if the binary expansion of $m$ ends with the string $\sigma_i$.
\end{enumerate}
We will now show that $A$ has the properties desired.

\begin{claim*}
$A$ is $f$-dense.    
\end{claim*}

\begin{proof}
Consider a single interval $[n_i, n_{i + 1})$. Note that this interval is composed of some number of disjoint intervals of length $2^{2i + 1}$ and that each one of these smaller intervals contains exactly one element of $A$. Also note that $A$ contains all numbers less than $n_1$. From these two observations, it is easy to see that for each $i$, $A$ is $1/2^{2i + 1}$-dense at $n_{i + 1}$. In other words, $A$ is sufficiently dense at the endpoints of each interval $[n_i, n_{i + 1})$. It remains to check that $A$ is also sufficiently dense in the interior of these intervals.

Note that any number $m \in (n_i, n_{i + 1})$ can be written as $m = n_i + k\cdot 2^{2i + 1} + l$ for some $k$ and $l < 2^{2i + 1}$. Our observations above imply that $A$ is $1/2^{2i + 1}$-dense at $n_i + k\cdot 2^{2i + 1}$. We now want to show that $f(m)$ and $l$ are small enough that $A$ is still sufficiently dense at $m$.

To see why, note that by assumption, $n_i > 2^{2i - 1}$ and thus $l < 4\cdot(n_i + k\cdot 2^{2i + 1})$. Thus, since $A$ is $1/2^{2i + 1}$-dense at $n_i + k\cdot 2^{2i + 1}$, it is also $1/(5\cdot 2^{2i + 1})$-dense at $m$. By our choice of $n_i$, this implies that $A$ is $f(m)$-dense at $m$.
\end{proof}

\begin{claim*}
Every infinite subset of $A$ computes $X$ uniformly.
\end{claim*}

\begin{proof}
Suppose $B$ is an infinite subset of $A$. For any element $m \in B$ such that $m \geq n_1$, the binary expansion of $m$ must end with some number of zeros. Suppose it ends with $k$ zeros. Then the binary expansion of $m$ must end with a string $\sigma$ of length $k$, followed by a one, followed by $k$ zeros. By construction of $A$, this string $\sigma$ is guaranteed to be an initial segment of $X$. So to compute any bit $X(i)$ of $X$, we simply need to find an element $m \geq n_1$ of $B$ whose binary expansion ends with a long enough string of zeros. And such an element is guaranteed to exist by our construction of $A$ and the fact that $B$ is infinite.
\end{proof}

\noindent This concludes the proof.
\end{proof}

\subsection{Dense subsets of dense sets}
\label{sec:dense_subsets}

Our final counterexample is of a somewhat different nature than the first two. In our previous counterexamples, we considered possible strengthenings of our main theorem given by relaxing the density requirement on $A$. We will now consider possible strengthenings given by restricting which subsets of $A$ we are allowed to use.

The main theorem of this paper shows that it is impossible to encode an uncomputable set $X$ into all infinite subsets of a set $A$ of positive lower density. But what if we only want to encode $X$ into all subsets of $A$ of positive lower density? Or into all subsets of positive upper density? We will show that such encoding is possible in both cases, but that it can only be done uniformly in the first case.

\densesubsetsofdense

\begin{proof}
Suppose we only had to consider $1/2$-dense subsets of $A$. For each $n$, any such subset must contain at least one element of $A$ between $n$ and $2n$. Thus we could code one bit of $X$ into the parity of the elements of $A$ between $n$ and $2n$ and be sure that any $1/2$-dense subset of $A$ will be able to recover this bit. However, we want our procedure to work for subsets of $A$ of any constant density greater than zero, not just $1/2$-dense subsets. So we will encode each bit of $X$ infinitely many times, each time making a more conservative assumption about the density of the subset of $A$ doing the decoding.

More formally, pick a computable bijection $\pi \colon \N \to \N\times \Q^+$ and a computable sequence $0 = n_0 < n_1 < n_2 < \ldots$ such that for all $i$,
\begin{enumerate}
\item if $\pi(i) = (j, \delta)$ then $\delta\cdot n_{i + 1} > n_i$
\item and $n_i$ is even.
\end{enumerate}
The idea is that in the interval $[n_i, n_{i + 1})$, elements of $A$ will code the $j^\text{th}$ bit of $X$ in such a way that any $\delta$-dense subset of $A$ will be able to decode it.

Now define $A$ as follows. For each $i \in \N$ and $m \in [n_i, n_{i + 1})$, let $(j, \delta) = \pi(i)$ and put $m$ into $A$ if
\[
  \begin{cases}
    m \text{ is even and } X(j) = 0\\
    \text{or } m \text{ is odd and } X(j) = 1.
  \end{cases}
\]
In other words, in the interval $[n_i, n_{i + 1})$, the $j^\text{th}$ bit of $X$ is encoded into the parity of the elements of $A$.

First observe that $A$ has lower density $1/2$: for each $m$, either $2m$ or $2m + 1$ is in $A$ (this is why we wanted to make sure each $n_i$ is even).

Second, observe that all subsets of $A$ of positive lower density can compute $X$ uniformly. Given a subset $B \subseteq A$ of positive lower density and a $j \in \N$, we can compute $X(j)$ as follows: search for any $i$ such that $\pi(i) = (j, \delta)$ for some $\delta$ and $[n_i, n_{i + 1})\cap B$ is nonempty. Then from the parity of any element of $[n_i, n_{i + 1})\cap B$ we can recover $X(j)$. The fact that such an $i$ must exist follows from our choice of $n_i$'s and the fact that $B$ has positive lower density---if $B$ is $\delta$-dense and $\pi(i) = (j, \delta)$ then $B$ must have at least one element in the interval $[n_i, n_{i + 1})$ because otherwise $B$ cannot be $\delta$-dense at $n_{i + 1}$.
\end{proof}

For the case of subsets of positive upper density, the counterexample can be constructed using a result of Bienvenu, Day and H\"olzl~\cite{bienvenu2009bi}.

\begin{theorem}[Bienvenu, Day and H\"olzl]
For any set $X$, there is a set $A$ such that for all partial functions $f\colon \N \to \{0, 1\}$, if the domain of $f$ has positive upper density and for all $n \in \dom(f)$, $f(n) = A(n)$ then $f$ computes $X$.
\end{theorem}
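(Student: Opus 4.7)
The plan is to construct $A$ by encoding each bit of $X$ redundantly, infinitely many times over, into blocks of $\N$ arranged so that any partial oracle $f$ with positive upper density domain is forced to hit blocks corresponding to each bit. Fix a surjection $i \mapsto (k_i, \delta_i)$ from $\N$ onto $\N \times \Q^+$ in which every pair occurs infinitely often, and partition $\N$ into consecutive intervals $B_1 < B_2 < \ldots$ whose lengths grow very rapidly (the exact growth rate being the crux of the argument). On each $B_i$ set $A(n) = X(k_i)$ for all $n \in B_i$.

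The decoding procedure for $X(k)$ from a partial $f$ with $\upperd(\dom f) \geq \delta$ that agrees with $A$ on its domain is simple: enumerate $\dom f$ using $f$ as oracle and search for an index $i$ with $k_i = k$, $\delta_i \leq \delta$, and $\dom f \cap B_i \neq \emptyset$; for the first such $i$, output $f(n)$ for any $n$ in the intersection, which by construction equals $A(n) = X(k_i) = X(k)$. Thus the theorem reduces to showing that this search always terminates for every $k$.

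The main obstacle is the termination argument, which requires careful tuning of the block sizes. The idea is to arrange $|B_i|$ large enough relative to $\max(B_{i-1})$ that, for every $n \in B_i$ with $n \geq \min(B_i)/\delta_i$ and $|\dom f \cap [n]| \geq \delta_i (n+1)$, the estimate
\[
  |\dom f \cap B_i| \geq \delta_i(n+1) - \min(B_i) > 0
\]
forces $\dom f$ to intersect $B_i$. Since $\upperd(\dom f) \geq \delta \geq \delta_i$ produces infinitely many witnessing $n$'s, and each pair $(k, \delta')$ with $\delta' \leq \delta$ is assigned to infinitely many blocks, a counting argument using the rapid growth of the $|B_i|$ shows that the witnessing $n$'s cannot all land in blocks assigned to the wrong pairs, so the search must terminate. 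Making this pigeonhole step precise, in particular ensuring that the ``good'' portion of each block (where $n \geq \min(B_i)/\delta_i$) comprises almost all of $B_i$ and that witnessing $n$'s are forced into good portions of blocks of every given index, is the main technical work.
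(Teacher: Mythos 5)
The paper cites this result without giving a proof, so there is no in-paper argument to compare against; evaluating the proposal on its own terms, the gap is not a matter of tuning block sizes --- the constant-on-blocks encoding cannot work for any choice of growth rate.

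Concretely, with your rapidly growing blocks the set $P_k = \bigcup\{B_i : k_i = k\}$ of positions whose $A$-value carries the bit $X(k)$ has upper density $1$: each block $B_i$ with $k_i = k$ occupies nearly all of $[0, \max B_i]$, and there are infinitely many such $i$ since the pair $(k,\delta)$ recurs infinitely often in your enumeration. Now take $f = A \uh P_0$. Then $\dom f = P_0$ is a computable set of upper density $1$, $f$ agrees with $A$ on its domain, and $f$ is the constant function $X(0)$ there --- so $f$ is outright computable, and therefore computes $X$ only if $X$ is computable. More generally any ``projection'' encoding $A(n) = X(g(n))$ is ruled out: the cells $g^{-1}(k)$ partition $\N$, at most one cell of a partition can have lower density $1$, and for any other $k$ the complement $\N \setminus g^{-1}(k)$ has positive upper density while the restriction of $A$ to it carries no information about $X(k)$ at all. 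No pigeonhole or counting argument can rescue the termination step, because the decoder genuinely never gets to see the relevant bit. The actual construction must be non-local: each position of $A$ must depend on many bits of $X$ (roughly, by erasure-coding ever-longer prefixes of $X$ into successive intervals), so that no single bit of $X$ has a dedicated region that a positive-upper-density adversary can dodge.
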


\begin{corollary}
For any set $X$, there is a set $A$ of positive lower density such that all subsets of $A$ of positive upper density compute $X$.
\end{corollary}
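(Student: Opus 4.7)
The plan is to reduce the corollary to the Bienvenu--Day--H\"olzl theorem by encoding the characteristic function of the set it produces into pairs of consecutive natural numbers, so that picking out ``dense enough'' elements from a subset recovers a partial function on a positive-upper-density domain.

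First I would apply the Bienvenu--Day--H\"olzl theorem to $X$ to obtain a set $A \subseteq \N$ such that every partial function $f \colon \N \to \{0,1\}$ whose domain has positive upper density and which agrees with $A$ on its domain computes $X$. Then I would define a new set $A' \subseteq \N$ by specifying, for each $n$, that $2n \in A'$ if $A(n) = 0$ and $2n + 1 \in A'$ if $A(n) = 1$, with no other elements. Thus $A'$ contains exactly one element of each pair $\{2n, 2n + 1\}$, which immediately gives $\lowerd(A') = 1/2 > 0$.

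Next, given any $B \subseteq A'$ of positive upper density, I would define a partial function $f \colon \N \to \{0,1\}$ by $f(n) = 0$ if $2n \in B$, $f(n) = 1$ if $2n + 1 \in B$, and leaving $f(n)$ undefined otherwise. Since $B \subseteq A'$ and exactly one of $\{2n, 2n + 1\}$ lies in $A'$, the function $f$ is well defined, computable from $B$, and satisfies $f(n) = A(n)$ for every $n \in \dom(f)$. Applying Bienvenu--Day--H\"olzl to $f$ would then yield that $f$, and hence $B$, computes $X$, provided we verify that $\dom(f)$ has positive upper density.

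The one step requiring a brief calculation is this final density transfer. Since each pair $\{2n, 2n + 1\}$ contributes at most one element to $B$, we have $|B \cap [2m + 1]| \leq |\dom(f) \cap [m]|$ for every $m$, so $\upperd(\dom(f)) \geq 2 \cdot \upperd(B) > 0$. This is the only nontrivial point, and it is purely a bookkeeping exercise from the two-to-one correspondence between pairs $\{2n, 2n+1\}$ and natural numbers $n$. I do not expect any genuine obstacle: the argument is a direct reduction to Bienvenu--Day--H\"olzl via the pairing trick, with the density lower bound $\lowerd(A') = 1/2$ coming for free from the construction.
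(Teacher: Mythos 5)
Your proposal is correct and is essentially the same argument as in the paper: spread the characteristic function of the Bienvenu--Day--H\"olzl set over pairs $\{2n, 2n+1\}$, observe the resulting set has lower density $1/2$, and show any subset of positive upper density decodes to a partial function of positive upper density domain agreeing with that set (your encoding swaps the roles of $0$ and $1$ relative to the paper's, which is immaterial). The density-transfer bookkeeping you flag as the ``one nontrivial step'' is exactly the point the paper also records, with the same factor-of-two observation.
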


\begin{proof}
Let $A$ be the set from Bienvenu, Day and H\"olzl's theorem. Let $\tilde{A} = \{2n \mid n \in A\} \cup \{2n + 1 \mid n \notin A\}$. Note that $\tilde{A}$ has lower density $1/2$ and that from each element of $\tilde{A}$, we can recover one bit of the characteristic function of $A$.

Thus any subset $B$ of $\tilde{A}$ can compute a partial function $f_B \colon \N \to \{0, 1\}$ which agrees with $A$ everywhere it is defined. Furthermore, the density of $\dom(f_B)$ at any $n$ is about twice the density of $B$ at $2n + 2$. Therefore if $B$ has positive upper density then so does $\dom(f_B)$. So for any such $B$, our choice of $A$ implies that $f_B$, and hence $B$ itself, computes $X$.
\end{proof}

As we noted above, it is impossible to strengthen this corollary to make all subsets of $A$ of positive upper density compute $X$ uniformly.


\begin{proposition}
For any uncomputable set $X$, set $A$ of positive lower density and Turing functional $\Phi$, there is some subset $B \subseteq A$ of positive upper density such that $\Phi(B) \neq X$.
\end{proposition}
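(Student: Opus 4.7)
The plan is to derive this proposition as an essentially immediate consequence of Lemma~\ref{lemma:main}. First, I would verify that $(\0, A)$ is itself a density Mathias condition. Since $\lowerd(A) > 0$, there exists some $N$ such that $A$ is $(\lowerd(A)/2)$-dense at every $n \geq N$, so $A$ is dense along the infinite computable set $D = \{n \in \N : n \geq N\}$, which does not compute the uncomputable set $X$.

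Next, I would apply Lemma~\ref{lemma:main} to the functional $\Phi$ and the condition $(\0, A)$, obtaining a density Mathias extension $(s', B') \leq (\0, A)$ such that $\Phi(C) \neq X$ for every set $C$ compatible with $(s', B')$. I would then take $B = s' \cup B'$. By the definition of extension, $s' \cup B' \subseteq A$, and $B$ is tautologically compatible with $(s', B')$, so $\Phi(B) \neq X$ is immediate.

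The only remaining point is that $B$ has positive upper density. But by the definition of a density Mathias condition, there exists $\delta' > 0$ and an infinite set $D'$ such that $B'$ is $\delta'$-dense along $D'$. This means $|B' \cap [n]| \geq \delta' \cdot (n+1)$ for infinitely many $n$, so $\upperd(B') \geq \delta' > 0$, and hence $\upperd(B) \geq \delta'$ as well.

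There is no real obstacle in this argument: the substantive work is all packaged in Lemma~\ref{lemma:main}, and the proposition amounts to observing that the reservoir of any density Mathias condition automatically has positive upper density, so the entire reservoir (plus stem) can serve as the witness $B$ rather than passing to a generic infinite subset.
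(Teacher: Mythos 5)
Your proposal is correct and matches the paper's own proof essentially verbatim: both apply Lemma~\ref{lemma:main} to $(\0, A)$ to obtain a condition $(s', B')$ forcing $\Phi(C)\neq X$, take $B = s' \cup B'$ as the witness, and observe that the reservoir of a density Mathias condition is dense along an infinite set and hence has positive upper density. The only cosmetic difference is that you re-derive the fact that $(\0,A)$ is a density Mathias condition, which the paper had already noted in Section~\ref{sec:main}.
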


\begin{proof}
This follows directly from Lemma~\ref{lemma:main}. In particular, using that lemma we can find a density Mathias condition $(s, B)$ extending $(\0, A)$ such that for every set $C$ compatible with $(s, B)$, $\Phi(C) \neq X$. In particular, if $C = s\cup B$ then $\Phi(C) \neq X$. Since $(s, B)$ is a density Mathias condition, $C$ has positive upper density and so we are done.
\end{proof}

\section{Avoiding PA degree}
\label{sec:main-PA}

In this section we prove the theorem promised in the introduction which modifies our main theorem to avoid PA degree rather than cones of Turing degrees.

\mainPA*

In order to prove this theorem, it will be convenient to use the following alternative definition of PA degree.

\begin{definition}
Given a partial function $f \colon \N \to \{0, 1\}$, a \term{completion} of $f$ is a total function $g \colon \N \to \{0, 1\}$ such that for every $n$ in the domain of $f$, $f(n) = g(n)$.
\end{definition}

\begin{definition}
A set $A$ is said to be of \term{PA degree} if for every partial computable function $f \colon \N \to \{0, 1\}$, $A$ computes a completion of $f$.
\end{definition}

We can thus obtain Theorem~\ref{thm:mainPA} as a corollary of the following theorem.

\begin{theorem}
Suppose $f\colon \N \to \{0, 1\}$ is a computable partial function with no computable completion. Then every set $A\subseteq \N$ of positive lower density contains an infinite subset which does not compute a completion of $f$.
\end{theorem}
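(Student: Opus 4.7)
The strategy is to imitate the proof of Theorem~\ref{thm:main}, replacing Seetapun's theorem with Liu's theorem and the property ``does not compute $X$'' with ``does not compute a completion of $f$'' throughout. I will work with a modified density Mathias forcing in which the reservoir condition requires an infinite set $D$ such that $B$ is dense along $D$ and $D$ does not compute a completion of $f$. Since $f$ has no computable completion, $(\0, A)$ is a valid initial condition, and any sufficiently generic $G$ compatible with it is an infinite subset of $A$.

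The partition lemma (Lemma~\ref{lemma:forcing_partition}) adapts verbatim once we have the right analog of Corollary~\ref{cor:seetapun2}: if $D$ is infinite and does not compute a completion of $f$, then every finite partition of $D$ has a part with an infinite subset $E$ such that $D \oplus E$ still does not compute a completion of $f$. This follows from the relativized form of Liu's theorem by exactly the inductive reduction used to derive Corollary~\ref{cor:seetapun2} from Seetapun's theorem.

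The main lemma's case split becomes: either (Case~1) some $n \in \dom(f)$ and finite $t \subseteq B$ satisfy $\Phi(s\cup t, n)\conv \neq f(n)$, handled by extending the stem to $s' = s \cup t$; or (Case~2) for every such $n$ and $t$, $\Phi(s \cup t, n)$ diverges or equals $f(n)$, in which case we must force $\Phi(G)$ to diverge at some $n \in \dom(f)$. Following the original structure, I will inductively build disjoint $B_0, B_1, \ldots$ with nested witnesses $D_0 \supseteq D_1 \supseteq \cdots$, where each $B_i$ is $\delta/2$-dense along $D_i$ and forces $\Phi(C, n_i) \neq f(n_i)$ for some $n_i \in \dom(f)$; combined with Case~2 this forces divergence at $n_i$ for any $C$ compatible with $(s, B \cap B_i)$. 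The counting argument that bounds the sequence length by $2/\delta$ is unchanged.

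The main obstacle is the inductive step that produces $B_{k+1}$. In the cone-avoidance proof, if all the $\Pi^0_1$ classes $\CC_{n, 1-X(n)}$ were empty, the comparison of which of $T_{n, 0}$ and $T_{n, 1}$ is finite would give an $O$-computation of $X$ (where $O = B_0 \oplus \cdots \oplus D_{k+1}$), contradicting $O \not\geq_T X$. The direct PA analog yields only an $O$-computation of $f$ restricted to $\dom(f)$, which is insufficient since this need not extend to an $O$-computable total completion of $f$. To resolve this I plan to use techniques from Liu's proof: consider a tree of $\Pi^0_1$ classes indexed by finite partial specifications of the candidate values of $\Phi$, and apply the relativized Liu's theorem combined with basis-theorem arguments to either extract the desired $B_{k+1}$ from a suitable subclass or, in the complementary case, construct from $O$ an actual completion of $f$---with values on inputs outside $\dom(f)$ chosen in a way whose correctness is secured by the tree structure. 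Once $B_{k+1}$ is produced, the rest of the argument---termination of the inductive construction and the deduction of the theorem from the main lemma via forcing---is identical to the cone-avoidance case.
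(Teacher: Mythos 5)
Your high-level plan (replace Seetapun with Liu, modify density Mathias forcing, keep the disjoint-dense-sets counting) is correct as far as it goes, and you have correctly identified the crux: the cone avoiding basis theorem has no PA-avoidance analog, so the step producing $B_{k+1}$ breaks. However, your proposed resolution of that crux is too vague to count as a proof, and the structure you retain from the cone-avoidance argument is actually incompatible with the fix the paper uses.

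Concretely, you write that you will ``inductively build disjoint $B_0, B_1, \ldots$ with nested witnesses $D_0 \supseteq D_1 \supseteq \cdots$'' and then extract $B_{k+1}$ via ``relativized Liu's theorem combined with basis-theorem arguments.'' The by-extension inductive structure is exactly what fails: without a basis theorem for PA avoidance, you cannot arrange that $B_0 \oplus \cdots \oplus B_{k+1} \oplus D_{k+1}$ avoids computing a completion of $f$, so the $\Pi^0_1$ classes cannot be taken relative to $B_0 \oplus \cdots \oplus B_k \oplus D_{k+1}$, and the nested construction collapses. The paper's resolution abandons extension entirely: at each step the \emph{whole} sequence of disjoint dense sets is rebuilt from scratch from tuples lying in the $\Pi^0_1$ classes, which are now relative to the witness set $E$ alone (not the $B_i$'s). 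Moreover, the relevant form of Liu's input is not a basis theorem but \emph{Liu's Lemma}, a dichotomy for a c.e.\ set $W$ of \emph{valuations}: either some $f$-correct valuation lies in $W$ (giving the easy Case~1 extension of the stem) or there are arbitrarily many pairwise-incompatible valuations outside $W$, each of which yields a nonempty class $\CC_r$. This is why the right indexing object is a valuation $r$, not a pair $(n,b)$, and why the sets $B_i$ are required to be consistent with a \emph{pair} of incompatible valuations rather than just making $\Phi$ wrong at a single $n$. Your mention of ``a tree of $\Pi^0_1$ classes indexed by finite partial specifications'' gestures at this, but the dichotomy you actually need is Liu's Lemma, not a basis theorem.

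Finally, you omit an additional ingredient that the cone-avoidance proof did not need: a combinatorial lemma stating that among sufficiently many subsets of $[0,n)$ each of size at least $\delta n$, some pair must have intersection of size at least $(\delta^2/2)n$. This is what lets you combine two of the many sets $C_i$ arising from incompatible valuations $r_i$ outside $W$ into a single new dense reservoir $C_i \cap C_j$ that is simultaneously consistent with two incompatible valuations. Without this lemma the Case~2 branch of Liu's dichotomy does not produce the next lawful sequence. So the gap is genuine: the hard step is not just ``replace the basis theorem with something from Liu's paper,'' but a restructured induction (rebuild, don't extend), a switch from $(n,b)$-classes to valuation-indexed classes together with Liu's Lemma, and a new counting lemma about dense intersections.
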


\noindent We show in Theorem \ref{thm:PA-counterexample} that the assumption that $f$ is computable is neccesary.

Our proof of this theorem follows the same general strategy as the proof of our main theorem, with a few notable changes. First, we replace Seetapun's theorem with Liu's theorem (as discussed in the introduction) and also make some more-or-less standard changes to the proof which are relevant for avoiding PA degrees (these changes are mostly taken from the work of Liu~\cite{liu2012} and of Monin and Patey~\cite{monin2021srt}). Second, and more interestingly, we can no longer rely on the cone avoiding basis theorem (the point is that there is no such thing as a ``PA degree avoiding basis theorem'' for somewhat obvious reasons). Thus we are forced to replace the argument using the cone avoiding basis theorem with a somewhat more elaborate argument. Third, we need an extra combinatorial fact about intersections of dense sets. This fact is not hard to prove, but it is not as simple as the combinatorics which appeared in the proof of our main theorem.

\subsection{Liu's theorem and Liu's lemma}

To prove the theorem above, we will use two results due to Liu. First, Liu's theorem, which, as we discussed in the introduction, is the analogue of Seetapun's theorem for this setting (and will play an analogous role in the proof). We stated a version of this theorem as Theorem~\ref{thm:liu-thm}, but we will use the following more general version.

\begin{theorem}[Liu's Theorem, \cite{liu2012}]
Suppose $f \colon \N \to \{0, 1\}$ is a computable partial function with no computable completion. Then for every set $A \subseteq \N$, there is an infinite set $B$ which is a subset of either $A$ or the complement of $A$ such that $B$ does not compute a completion of $f$.
\end{theorem}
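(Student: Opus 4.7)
The plan is to adapt Liu's original proof from~\cite{liu2012}, which pioneered the PA-avoidance variant of Mathias forcing. The structure closely parallels the proof of Lemma~\ref{lemma:main}: we use a suitably restricted forcing notion, establish a key density lemma for each pair of Turing functionals, and show that a sufficiently generic filter yields the desired infinite set. Our conditions are tuples $(s_0, s_1, R)$ where $s_0 \subseteq A$ and $s_1 \subseteq \bar{A}$ are finite stems and $R$ is an infinite reservoir satisfying the admissibility constraints that both $R \cap A$ and $R \cap \bar{A}$ are infinite and $R$ does not compute any completion of $f$. The initial condition $(\emptyset, \emptyset, \N)$ is admissible because $\N$ is computable and $f$ has no computable completion (assuming, as we may, that both $A$ and $\bar{A}$ are infinite). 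An extension adds finitely many elements to $s_0$ from $R \cap A$ or to $s_1$ from $R \cap \bar{A}$ and shrinks $R$ while preserving admissibility. A sufficiently generic filter produces infinite sets $G_0 \subseteq A$ and $G_1 \subseteq \bar{A}$; the aim is to force that at least one of them fails to compute a completion of $f$. Note that the pairwise goal suffices: if both $G_i$ computed completions via some functionals $\Phi_i$, the pair $(\Phi_0, \Phi_1)$ would contradict the density lemma below.

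The central ``joint avoidance'' lemma states that for any admissible condition $(s_0, s_1, R)$ and any pair of Turing functionals $(\Phi_0, \Phi_1)$, there is an extension forcing that at least one of $\Phi_0(G_0)$, $\Phi_1(G_1)$ is not a completion of $f$. The proof splits by cases, exactly as in Lemma~\ref{lemma:main}. In the easy case, for some $i$ there is a finite $t \subseteq R$ on side $i$ and some $n \in \dom(f)$ with $\Phi_i(s_i \cup t, n) \conv \neq f(n)$; we simply extend the corresponding stem by $t$ and trim $R$ accordingly.

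The hard case --- which is the main obstacle --- is when no such disagreement with $f$ can be achieved on either side. We must then force one of the $\Phi_i(G_i)$ to be non-total. We consider the $\Pi^0_1$ class, computable from $R$, of pairs $(h_0, h_1)$ of total $\{0, 1\}$-valued functions such that each $h_i$ is consistent with every convergent value $\Phi_i(s_i \cup t, n)$ obtainable from finite $t \subseteq R$ on side $i$. Under the hard-case hypothesis, every such $(h_0, h_1)$ is automatically a pair of completions of $f$. If this class is empty, K\"onig's lemma yields finite witnesses with which to shrink $R$ and force simultaneous divergence. If it is nonempty, we invoke a specialized basis theorem in the spirit of the cone-avoiding basis theorem but adapted to PA avoidance --- the disjunctive basis theorem introduced in~\cite{liu2012} --- which delivers a path $(h_0, h_1)$ and a refinement $R' \subseteq R$ whose generic forces $\Phi_i(G_i)$ to be non-total on at least one side. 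Formulating this basis theorem correctly and verifying that it interacts properly with the Mathias extension process is the essential technical difficulty and is the heart of Liu's original argument.
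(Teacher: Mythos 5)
The paper you're working from does not prove Liu's theorem --- it only states it with a citation to~\cite{liu2012}, so there is no proof in this paper for you to match. Nonetheless, the substance of your sketch can still be evaluated, and there is a genuine gap at its critical point.

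Your general scaffolding (two-sided Mathias conditions, a density lemma handled pairwise for functionals $(\Phi_0,\Phi_1)$, and a case split into ``force a disagreement with $f$'' versus ``force non-totality'') is the standard Seetapun-style setup, and it is fine as far as it goes. The problem is what you invoke at the crux of the hard case. You appeal to a ``disjunctive basis theorem in the spirit of the cone-avoiding basis theorem but adapted to PA avoidance.'' No such theorem exists, and the present paper says so explicitly in Section~\ref{sec:main-PA}: ``we can no longer rely on the cone avoiding basis theorem (the point is that there is no such thing as a `PA degree avoiding basis theorem' for somewhat obvious reasons).'' The obvious reason is that a nonempty $\Pi^0_1$ class can consist entirely of sets of PA degree (e.g.\ the class of completions of a fixed partial computable function with no computable completion), so a basis-theorem-style selection of a path cannot, even in principle, hand you a reservoir avoiding PA degree. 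Your phrase ``delivers a path $(h_0,h_1)$ and a refinement $R'\subseteq R$'' is where the proof would stall: the entire difficulty of Liu's argument is to circumvent exactly this obstacle, not to reformulate it as a basis theorem.

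The actual replacement for the basis theorem is a combinatorial lemma about c.e.\ sets of valuations, which this paper states (and proves) as Liu's Lemma in Section~\ref{sec:main-PA}: if $W$ is a c.e.\ set of valuations and $f$ has no computable completion, then either $W$ contains an $f$-correct valuation or, for every $k$, there exist $k$ pairwise incompatible valuations outside $W$. One then works with $\Pi^0_1$ classes indexed by valuations $r$ (roughly, the class of reservoirs that can never make $\Phi$ contradict $r$), takes $W$ to be the set of $r$ for which the class is empty, and applies Liu's Lemma. The first alternative gives the ``force a disagreement'' case; the second alternative yields many incompatible valuations whose associated nonempty classes can be combined (via further combinatorics on dense sets, or, in Liu's original proof, via his tree/partition machinery) to force non-totality on one side. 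If you want to fill the gap, replace your $\Pi^0_1$ class of pairs $(h_0,h_1)$ and the phantom basis theorem with this valuation-indexed family of $\Pi^0_1$ classes and Liu's Lemma, and rework the case analysis around which valuations lie in $W$.
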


As with Seetapun's theorem, it is easy to use this theorem to prove the following corollary.

\begin{corollary}
Suppose $f \colon \N \to \{0, 1\}$ is a computable partial function and $A \subseteq \N$ is an infinite set which does not compute a completion of $f$. Then for every finite partition $B_1,\ldots, B_n$ of $A$, at least one of the $B_i$'s has an infinite subset which does not compute a completion of $f$.
\end{corollary}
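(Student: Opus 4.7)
The plan is to mirror the proof of Corollary~\ref{cor:seetapun2}, using Liu's theorem in place of Seetapun's theorem. As a preliminary step, I would establish the following bipartite analogue of Corollary~\ref{cor:seetapun1}: if $A \subseteq \N$ is infinite and does not compute a completion of $f$, then for any $B \subseteq A$ there is an infinite subset of either $B$ or $A \setminus B$ which does not compute a completion of $f$. The proof is essentially a direct transcription of the argument used for Corollary~\ref{cor:seetapun1}. Fix a bijection $\pi \colon A \to \N$ computable from $A$ and note that $\pi(A \setminus B) = \N \setminus \pi(B)$. Then apply the relativized form of Liu's theorem to the set $\pi(B)$. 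The relativization is legitimate because $f$ is computable (hence $A$-computable) and, by the hypothesis on $A$, $f$ has no completion computable from $A$. This yields an infinite set $C$, contained in $\pi(B)$ or $\pi(A \setminus B)$, such that $C \oplus A$ does not compute a completion of $f$. Since $A$ computes $\pi$, the preimage $\pi^{-1}(C)$ is computable from $C \oplus A$, so it is an infinite subset of either $B$ or $A \setminus B$ which does not compute a completion of $f$.

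With this bipartite version in hand, I would prove the corollary by induction on $n$. The base case $n = 1$ is trivial, since $B_1 = A$ and $A$ itself does not compute a completion of $f$. For the inductive step, suppose the statement holds for all partitions into $n$ pieces and let $B_1, \ldots, B_{n+1}$ be a partition of $A$. Apply the bipartite version above to the two-piece partition $\bigcup_{i \leq n} B_i,\, B_{n+1}$ of $A$ to obtain an infinite set $C$, contained in one of these two pieces, which does not compute a completion of $f$. If $C \subseteq B_{n+1}$ we are finished. Otherwise $C \subseteq \bigcup_{i \leq n} B_i$, and we apply the inductive hypothesis to $C$ (which is infinite and does not compute a completion of $f$) partitioned as $C \cap B_1, \ldots, C \cap B_n$ to obtain an infinite subset of some $B_i \cap C$, and hence of $B_i$, not computing a completion of $f$.

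The only ingredient not made explicit in the excerpt is the relativized form of Liu's theorem used in the first paragraph, but this is the single anticipated obstacle and it is a mild one: Liu's argument goes through uniformly when the ground model is relativized to an oracle that does not itself compute a completion of $f$, since the hypothesis ``no computable completion'' becomes ``no $A$-computable completion,'' which is exactly what we have assumed. Everything else in the proof is a mechanical adaptation of the Seetapun version.
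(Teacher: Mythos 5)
Your proof is correct and is exactly the "mechanical adaptation" the paper has in mind: the paper omits the proof of this corollary with the remark that it is easy to prove just as the Seetapun analogues (Corollaries~\ref{cor:seetapun1} and~\ref{cor:seetapun2}), and your argument transcribes that proof with Liu's theorem in place of Seetapun's, correctly noting that the relativization to $A$ is licensed because $f$ is (trivially $A$-)computable and has no $A$-computable completion by hypothesis.
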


In the course of proving Theorem~\ref{thm:liu-thm}, Liu implicitly used the following lemma (see Lemma 6.6 of \cite{liu2012}). It was also used in a slightly different form by Monin and Patey in~\cite{monin2021srt} (see Lemmas 2.13 and 3.12).

\begin{definition}
A \term{valuation} is a finite partial function $p \colon \N \to \{0, 1\}$ represented as a lookup table (rather than, say, a code for a Turing machine which computes $p$). 
\end{definition}

\begin{definition}
Given a valuation $p$ and a partial function $f \colon \N \to \{0, 1\}$, $p$ is \term{$f$-correct} if $p \subseteq f$---in other words if for all $n$ in the domain of $p$, $n$ is also in the domain of $f$ and $p(n) = f(n)$.
\end{definition}

\begin{lemma}[Liu \cite{liu2012}]
If $W$ is a c.e.\ set of valuations and $f$ is a computable partial function with no total completion then either $W$ contains some $f$-correct valuation or for every $k$, there are at least $k$ many incompatible valuations outside of $W$.
\end{lemma}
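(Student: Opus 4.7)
My plan is to prove the contrapositive: I will assume that no valuation in $W$ is $f$-correct and that there is some fixed $k$ with the property that no $k$ valuations outside $W$ are pairwise incompatible, and from this I will produce a computable completion of $f$, contradicting the hypothesis. For $\sigma \in 2^{<\omega}$, let $p_\sigma$ denote the valuation with domain $\{0, \ldots, |\sigma| - 1\}$ given by $p_\sigma(i) = \sigma(i)$, and consider the class
\[
P' = \{g \in 2^\omega : \forall N,\; p_{g \upharpoonright N} \notin W\}.
\]
Since $W$ is c.e., the family $\{\sigma : p_\sigma \in W\}$ is c.e., so $P' = 2^\omega \setminus \bigcup_{\sigma : p_\sigma \in W}[\sigma]$ is a $\Pi^0_1$ class.

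The argument then rests on two observations. First, every total completion $g$ of $f$ lies in $P'$: each initial segment valuation $p_{g \upharpoonright N}$ is $f$-correct (since $g$ extends $f$), so by our assumption $p_{g \upharpoonright N} \notin W$. Second, $|P'| \leq k - 1$: the $2^N$ valuations $\{p_\sigma : \sigma \in 2^N\}$ are pairwise incompatible (any two distinct length-$N$ strings disagree at an input common to both domains), so by assumption at most $k - 1$ of them can lie outside $W$. Hence the tree $T = \{\sigma \in 2^{<\omega} : p_\sigma \notin W\}$ has at most $k - 1$ strings at each level, and its path set $[T] = P'$ contains at most $k - 1$ elements by K\"onig's lemma.

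Putting these together, $P'$ is a nonempty (any total extension of $f$ belongs to it) finite $\Pi^0_1$ class. The standard fact that every element of a finite $\Pi^0_1$ class in $2^\omega$ is computable then finishes the proof: it yields a computable completion of $f$, contradicting the hypothesis that $f$ has none. I do not expect serious difficulty in execution; the points that merit the most care are matching ``pairwise incompatible'' as defined for valuations with the level-$N$ antichain in a binary tree, and invoking the finite-$\Pi^0_1$-class fact. The latter follows from a standard isolation argument: in a finite class each element has some prefix not shared with any other element, and the co-c.e.\ approximation to the class eventually leaves a unique extension of that prefix at every length, which lets us compute the element bit by bit.
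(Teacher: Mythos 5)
Your Claim 1 is false, and this is a genuine gap that breaks the argument. You assert that for a total completion $g$ of $f$, each initial-segment valuation $p_{g \uh N}$ is $f$-correct and hence lies outside $W$. But the definition of $f$-correct is $p \subseteq f$, which in particular requires $\dom(p) \subseteq \dom(f)$. The valuation $p_{g\uh N}$ has domain $[0,N)$, so it is $f$-correct only when $f$ is defined on all of $[0,N)$. Since $f$ is partial (if it were total and computable, it would be its own computable completion), $\dom(f)$ omits some numbers, and for every $N$ past the first such gap the valuation $p_{g\uh N}$ is not $f$-correct. The hypothesis that $W$ contains no $f$-correct valuation therefore says nothing about whether $p_{g\uh N} \in W$. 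For a concrete failure: suppose $0 \notin \dom(f)$ and let $W$ be the computable (hence c.e.) set of all valuations $p$ with $0 \in \dom(p)$. Then $W$ contains no $f$-correct valuation, yet $p_{g\uh N}\in W$ for every $g$ and every $N\geq 1$, so your $P'$ is empty. The lemma's conclusion still holds here (take the $2^n$ valuations with domain $\{1,\dots,n\}$), so the deduction you want cannot be recovered from $P'$.

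The underlying mismatch is that your $\Pi^0_1$ class lives on initial segments $[0,N)$, whereas the hypothesis only constrains valuations that live inside $\dom(f)$, which is generally not an initial segment. The paper's proof sidesteps this by working with valuations supported on an adaptively chosen finite set $\{n_1,\dots,n_k\}$ (not required to be $[0,N)$), together with the relaxed notion ``$f$-correct mod $s$'' that ignores the coordinates in $s$. The positions are found one at a time: if no new position $n_{i+1}$ could be added, then for every candidate $n$ some $p\in W$ would have to disagree with $f$ at $n$ (or be defined where $f$ is not), and reading off $1-p(n)$ yields a computable completion of $f$. This adaptivity is what your construction lacks; the hypothesis controls at most one valuation per level of the full binary tree (the one agreeing with $f$ on all of $[0,N)$, if it even exists), so you cannot conclude that $T$ is infinite. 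Your Claim 2 and the finite-$\Pi^0_1$-class reasoning are both correct; it is only the nonemptiness of $P'$ that fails, and fixing it requires abandoning the initial-segment tree in favor of something like the paper's mod-$s$ bookkeeping.
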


\begin{proof}
Suppose $W$ does not contain any $f$-correct valuations and fix a number $k$. For any finite set $s \subseteq \N$ and valuation $p$, say that $p$ is \term{$f$-correct mod $s$} if $p$ is $f$-correct when we ignore the elements of $s$---in other words, for all $n \in \dom(p) \setminus s$, $n$ is in the domain of $f$ and $p(n) = f(n)$. We will show that there are numbers $n_1,\ldots,n_k$ such that no element of $W$ is $f$-correct mod $\{n_1, \ldots, n_k\}$. In particular, this implies that $W$ does not contain any element with domain contained in $\{n_1,\ldots,n_k\}$. Since it is easy to see that there are at least $k$ such incompatible functions (in fact, at least $2^k$), this is sufficient to prove the lemma.

We will construct $n_1,\ldots,n_k$ by induction. Suppose we have constructed $n_1,\ldots,n_i$ and want to find $n_{i + 1}$. In other words, we know $W$ contains no element which is $f$-correct mod $\{n_1,\ldots,n_i\}$ and we want to find some $n \notin \{n_1,\ldots,n_i\}$ such that no element is $f$-correct mod $\{n_1,\ldots,n_i, n\}$. 

Suppose for contradiction that no such $n$ exists. Then for every $n \notin \{n_1,\ldots,n_i\}$, $W$ contains some element $p$ which is $f$-correct mod $\{n_1,\ldots, n_i, n\}$. At the same time, $p$ cannot be $f$-correct mod $\{n_1,\ldots,n_i\}$. By unrolling the definitions, it is possible to see that this implies that $n$ is in the domain of $p$ and either not in the domain of $f$ or $p(n) \neq f(n)$. The key point is that in either case, $1 - p(n)$ does not disagree with $f(n)$.

But this gives us a method to compute a completion $g$ of $f$: For each $n \in \{n_1,\ldots,n_i\}$, hard-code some appropriate value for $g(n)$. For each $n \notin \{n_1,\ldots,n_i\}$, search for some $p \in W$ which is $f$-correct mod $\{n_1,\ldots,n_i, n\}$ (note that there is a computable enumeration of $f$-correct valuations). For the first such $p$ that is found, set $g(n) = 1 - p(n)$. Since $f$ has no computable completions, this gives us the desired contradiction.
\end{proof}

\subsection{A combinatorial lemma}

As we referred to above, we will need the following combinatorial lemma, which says that when you have enough dense sets, two of them must have fairly dense intersection. The proof is a standard counting argument, which we phrase as a probabilistic proof based on estimating the variance of a certain random variable.

\begin{lemma}
For every $\delta > 0$, there is some $k$ with the following property: For any $n > 0$ and any $k$ subsets $A_1,\ldots,A_k \subseteq [0, n)$, all of which have size at least $\delta n$, there is some pair of indices $i \neq j$ such that $\frac{A_i \cap A_j}{n} \geq \delta^2/2$.
\end{lemma}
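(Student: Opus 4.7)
The plan is to use a standard second-moment counting argument, phrased probabilistically. Fix $\delta > 0$, fix $n$, fix sets $A_1, \dots, A_k \subseteq [0, n)$ each of size at least $\delta n$, and let $x$ be drawn uniformly at random from $[0, n)$. Define the random variable $X = |\{i : x \in A_i\}|$. The key quantities are then
\[
  \E[X] \;=\; \sum_{i=1}^k \Pr[x \in A_i] \;=\; \sum_{i=1}^k \frac{|A_i|}{n} \;\geq\; k\delta,
\]
and
\[
  \E[X^2] \;=\; \sum_{i,j=1}^k \Pr[x \in A_i \cap A_j] \;=\; \sum_{i,j=1}^k \frac{|A_i \cap A_j|}{n}.
\]

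Next I would apply the Cauchy--Schwarz/Jensen inequality $\E[X^2] \geq (\E[X])^2 \geq k^2 \delta^2$. The diagonal terms contribute $\sum_i |A_i|/n \leq k$, so the off-diagonal sum satisfies
\[
  \sum_{i \neq j} \frac{|A_i \cap A_j|}{n} \;\geq\; k^2 \delta^2 - k.
\]
Averaging over the $k(k-1)$ ordered pairs with $i \neq j$, some pair achieves
\[
  \frac{|A_i \cap A_j|}{n} \;\geq\; \frac{k^2 \delta^2 - k}{k(k-1)} \;=\; \frac{k\delta^2 - 1}{k - 1}.
\]

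Finally, I would choose $k$ so that this average beats $\delta^2/2$; a direct manipulation shows any $k \geq 2/\delta^2$ suffices (for instance $k = \lceil 2/\delta^2 \rceil + 1$), since then $k\delta^2 - 1 \geq \tfrac{\delta^2}{2}(k - 1)$. There is no real obstacle here: the whole proof is essentially one variance computation plus a pigeonhole on the resulting sum, and the only mild care needed is in tracking the diagonal contribution before averaging over pairs.
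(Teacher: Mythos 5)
Your proof is correct and uses essentially the same second-moment/pigeonhole argument as the paper. The only minor difference is that the paper keeps the diagonal contribution as a variable $y = \sum_i |A_i|/n$ and exploits monotonicity of $y^2 - y$ for $y > 1$, while you bound the diagonal above by $k$ directly; this yields the slightly better threshold $k \geq 2/\delta$ in the paper versus your $k \geq 2/\delta^2$, but since the lemma only asks for the existence of some $k$, the difference is immaterial.
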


\begin{proof}
We will begin by thinking of $k$ as a variable and fixing a number $n$ and sets $A_1,\ldots, A_k \subseteq [0, n)$. We will then find a lower bound on the maximum of $\frac{A_i\cap A_j}{n}$ over all $i \neq j$. At the end, we will check that if $k$ is large enough then this lower bound is at least $\delta^2/2$ (and does not depend on $n$).

So: fix $n$ and sets $A_1, \ldots, A_k$. Consider choosing $x$ from $[0, n)$ uniformly at random and counting the number of sets $A_i$ which contain $x$. Let $X$ be the random variable denoting this count. For each $i$, let $1_{A_i}$ be the random variable indicating whether $x$ is in $A_i$ or not ($1_{A_i} = 1$ if $x \in A_i$ and $0$ otherwise). Note that we have
\[
    X = \sum_{i \leq k} 1_{A_i}.
\]  

The key to the proof is to estimate the variance of $X$. First, by a standard calculation we have
\begin{align*}
\Var[X] &= \E[X^2] - \E[X]^2 \\
        &= \E\Bigl[\bigl(\sum_{i \leq k}1_{A_i}\bigr)^2\Bigr] - E\Bigl[\sum_{i \leq k}1_{A_i}\Bigr]^2\\
        &= \sum_{i, j \leq k}\E\left[1_{A_i}1_{A_j}\right] - \bigl(\sum_{i \leq k} \E[1_{A_i}]\bigr)^2\\
        &= \sum_{i, j} \frac{|A_i\cap A_j|}{n} - \bigl(\sum_{i}\frac{|A_i|}{n}\bigr)^2\\
        &= \sum_{i \neq j}\frac{|A_i\cap A_j|}{n} + \sum_i \frac{|A_i|}{n} - \bigl(\sum_i \frac{|A_i|}{n}\bigr)^2.
\end{align*}
Using the fact that $\Var[X] \geq 0$ and letting $y = \sum_i \frac{|A_i|}{n}$, we have
\[ 
    \sum_{i \neq j} \frac{|A_i \cap A_j|}{n} \geq y^2 - y.
\]
Since each $A_i$ has size at least $\delta n$, $y \geq \delta k$. Let's assume $k > 1/\delta$ and thus $y > 1$. Since $y^2 - y$ is monotonic in $y$ for $y > 1$, we have 
\[
    \sum_{i \neq j} \frac{|A_i \cap A_j|}{n} \geq y^2 - y \geq (\delta k)^2 - \delta k.
\]
We can now calculate a lower bound $\epsilon$ on the maximum value of $\frac{|A_i\cap A_j|}{n}$ over all $i \neq j$. For we have
\[
    \sum_{i \neq j} \frac{|A_i \cap A_j|}{n} \leq \epsilon k (k-1) \leq \epsilon k^2
\]
and thus
\[
    \epsilon k^2 \geq (\delta k)^2 - \delta k.
\]
Simple algebraic manipulation then yields $\epsilon \geq \delta^2 - \delta/k$.

We are now in position to finish the proof. Recall that we wanted $k$ large enough that $\epsilon \geq \delta^2/2$. The lower bound above shows that it's enough to take $k \geq 2/\delta$. Since this value does not depend on $n$, we are done.
\end{proof}

\subsection{The proof}

For the rest of this section, fix a computable partial function $f$ with no computable completion. Define a modified version of density Mathias forcing where conditions are Mathias conditions $(s, A)$ such that there is some infinite set $D$ which does not compute a completion of $f$ and so that $A$ is dense along $D$. It suffices to prove the following key lemma; the rest of the proof can be completed as in the proof of Theorem~\ref{thm:main}.

\begin{lemma}
For any condition $(s, A)$ and Turing functional $\Phi$, there is some condition $(s', A') \leq (s, A)$ such that for all sets $B$ compatible with $(s', A')$, $\Phi(B)$ is not a completion of $f$.
\end{lemma}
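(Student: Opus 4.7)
The plan mirrors the proof of Lemma~\ref{lemma:main} with three key modifications: (i) Seetapun's theorem and its partition corollary are replaced by Liu's theorem and its corollary; (ii) the cone-avoiding basis theorem step is replaced by an application of Liu's lemma on c.e.\ sets of valuations together with the combinatorial lemma on intersections of dense sets; and (iii) ``forcing $\Phi(C,n)$ to disagree with a specified value'' is replaced by ``forcing $\Phi(C,n)$ to diverge.'' Begin by splitting into cases. If some finite $t\subseteq A$ and $n\in\dom(f)$ satisfy $\Phi(s\cup t,n)\conv\neq f(n)$, take $s'=s\cup t$ and $A'=A\setminus[\max(t)]$. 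Otherwise, for every such $t$ and $n$, $\Phi(s\cup t,n)\in\{\uparrow,f(n)\}$; call this the hard case.

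For the hard case, fix $D$ witnessing the density Mathias condition. For each valuation $p$, let $\CC_p$ be the $\Pi^0_1$ class (in $A\oplus D$) of $Y\subseteq A$ that are $(\delta/2)$-dense along $D$ and satisfy $\Phi(s\cup t,n)\in\{\uparrow,p(n)\}$ for every finite $t\subseteq Y$ and every $n\in\dom(p)$. The hard-case assumption gives $A\in\CC_p$ whenever $p$ is $f$-correct, so the c.e.\ set $W=\{p:\CC_p=\emptyset\}$ contains no $f$-correct valuation. Apply Liu's lemma with $N:=\lceil\log_2(4/\delta)\rceil$ to obtain $n_1,\ldots,n_N$ such that every valuation with domain contained in $\{n_1,\ldots,n_N\}$ has $\CC_p\neq\emptyset$; for each of the $2^N$ pairwise-incompatible valuations $p$ with $\dom(p)=\{n_1,\ldots,n_N\}$, pick some $B_p\in\CC_p$ arbitrarily. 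Since $2^N\ge 4/\delta$, the combinatorial lemma applied at each $n\in D$ to the $2^N$ sets $B_p\cap[0,n+1)$ (all of size $\ge(\delta/2)(n+1)$) produces a pair $(p,p')$ with $|B_p\cap B_{p'}\cap[n]|/(n+1)\ge\delta^2/8$; designating one such pair canonically at each $n$ partitions $D$ into $\binom{2^N}{2}$ color classes. By the partition corollary to Liu's theorem, some class $D_{p_1,p_2}$ has an infinite subset $D'$ not computing a completion of $f$. Then $A':=B_{p_1}\cap B_{p_2}\subseteq A$ is $(\delta^2/8)$-dense along $D'$, so $(s,A')$ is a density Mathias condition extending $(s,A)$. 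Any $C$ compatible with $(s,A')$ lies below both $(s,B_{p_1})$ and $(s,B_{p_2})$, so for every $n\in\{n_1,\ldots,n_N\}$ we have $\Phi(C,n)\in\{\uparrow,p_1(n)\}\cap\{\uparrow,p_2(n)\}$; since the incompatible valuations $p_1,p_2$ differ at some such $n$, $\Phi(C,n)$ must diverge there, and so $\Phi(C)$ is not a completion of $f$.

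The main obstacle is the replacement of the cone-avoiding basis theorem, for which no PA-degree-avoiding analogue exists. The resolution is that we do not need to control the complexity of any individual $B_p$: Liu's lemma produces many nonempty $\Pi^0_1$ classes, the $B_p$'s are picked from them arbitrarily, the combinatorial lemma then forces two of them to intersect densely at every $n\in D$ ``for free,'' and the partition corollary to Liu's theorem extracts the infinite witness $D'$ of the requisite noncomputation property. A technical point to handle with care is that the above application of Liu's lemma formally needs $A\oplus D$ (rather than $D$ alone) to avoid computing a completion of $f$; this can be arranged either by strengthening the density Mathias condition or by a preliminary thinning of the initial reservoir before beginning the forcing construction.
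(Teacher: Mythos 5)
Your high-level plan correctly identifies all the ingredients: the upfront case split, Liu's lemma on c.e.\ sets of valuations, the combinatorial intersection lemma applied with $\delta/2$, the partition corollary to Liu's theorem, and forcing divergence by wedging the generic between two incompatible valuations. However, the one-shot structure of your hard case has a genuine gap, and it is precisely the one you flag at the end but wave away. Your class $\CC_p$ imposes the constraint $Y \subseteq A$, making $\CC_p$ a $\Pi^0_1$ class in $A \oplus D$; Liu's lemma relativized to $A \oplus D$ then requires $A \oplus D$ not to compute a completion of $f$. This fails in general: $A$ is an arbitrary set of positive lower density and may itself be of PA degree (take $A = \{2n \mid n \in P\} \cup \{2n+1 \mid n \in \N\}$ for $P$ of PA degree, which has lower density $1/2$). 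Neither of your proposed fixes escapes the circularity. Strengthening the definition of density Mathias condition to demand that $B \oplus D$ avoid completions breaks the base case, since $(\0, A)$ need not be a condition. A ``preliminary thinning'' of the reservoir to some somewhere-dense $A_0 \subseteq A$ not of PA degree is essentially the conclusion of the theorem being proved, and the same obstruction would recur at every subsequent stage of the forcing.

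The only way to make Liu's lemma applicable is to drop $Y \subseteq A$, so that $\CC_p$ is $\Pi^0_1$ in $D$ alone; but then the witnesses $B_p$ need not lie in $A$, so $A' = B_{p_1} \cap B_{p_2}$ need not be a subset of $A$, and intersecting with $A$ afterwards can destroy density or empty the set entirely. This tension is exactly what the paper's lawful-sequence iteration resolves: the $\Pi^0_1$ class quantifies over whole tuples $(X_1,\ldots,X_l,Y)$ with disjointness, density-along-$E$, and valuation-consistency constraints, all expressible from the witness set $E$ alone, and when $A \cap B_h'$ fails to be dense, the complement $A \setminus (B_1'\cup\cdots\cup B_{l+1}')$ is dense and the argument reruns with a lawful sequence of length $l+1$. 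Disjointness of the $B_i$'s caps the length at $8/\delta^2$, so the iteration terminates. As the paper's footnote stresses, the absence of a PA-avoiding basis theorem is exactly why the class must be $\Pi^0_1$ in $E$ alone, why the length-$(l+1)$ sequence need not extend the length-$l$ one, and why the iterative bookkeeping you were hoping to avoid is in fact unavoidable.
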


\begin{proof}
Fix a set $D$ and number $\delta > 0$ such that $A$ is $\delta$-dense along $D$ and $D$ does not compute a completion of $f$.

Given a set $B$ and a valuation $p$, say that $p$ is \term{consistent} with $B$ if for all finite sets $t \subseteq B$, $\Phi(s\cup t)$ and $p$ are compatible (i.e.\ for every $n$ either at least one of $\Phi(s\cup t, n)$ and $p(n)$ is not defined or they are both defined and are equal). Say that a sequence of sets $B_1,\ldots,B_l$ is \term{lawful} if it satisfies the following criteria:
\begin{enumerate}
    \item The $B_i$'s are all disjoint.
    \item For each $B_i$, there are incompatible valuations $p$ and $q$ such that $B_i$ is consistent with both $p$ and $q$. Note that this implies that for all $C$ compatible with $(s, B_i)$, $\Phi(C)$ must be compatible with both $p$ and $q$ and hence cannot be total.
    \item There is an infinite set $E$ such that each $B_i$ is $\delta^2/8$-dense along $E$, $A \setminus (B_1\cup \ldots \cup B_l)$ is $\delta/2$-dense along $E$ and $E$ does not compute a completion of $f$.\footnote{For avoiding a cone in our main theorem, we were also able to have that $B_1\oplus \cdots \oplus B_l \oplus E$ avoids the cone. This was because of the cone avoiding basis theorem. Here, we are not able to ask that $B_1 \oplus \cdots \oplus B_l \oplus E$ does not compute a completion of $f$. Before, we used $B_1,\ldots,B_l$ in the definition of $\CC_{n,b}$, so that it was a $\Pi^0_1$ class relative to $B_1 \oplus \cdots \oplus B_k \oplus E$. Now, when defining the analogous class, it must be a $\Pi^0_1$ class relative only to $E$ alone.
    
    One consequence of this is that while previously we constructed our sequences $B_1,\ldots,B_l$ by extension, here, the sequence of length $l+1$ will not necessarily extend the sequence of length $l$.}
\end{enumerate}
We will prove that if there is a lawful sequence of length $l$ then either there is a condition $(s', A')$ with the property we want or there is a lawful sequence of length $l + 1$. Also, since the $B_i$ are disjoint and $\delta^2/8$-dense along $E$, there is no lawful sequence of length strictly greater than $8/\delta^2$, which suffices to finish the proof.

\medskip\noindent\textbf{Finding lawful sequences.} Suppose that $B_1, \ldots, B_l$ is a lawful sequence, as witnessed by $E$ and $(p_1, q_1), (p_2, q_2), \ldots, (p_l, q_l)$. We want to either find $(s', A') \leq (s, A)$ satisfying the conclusion of the lemma or a lawful sequence of length $l + 1$.\footnote{A slightly subtle point here is that we allow $l = 0$---that is, we allow the starting sequence to be empty. As in the proof of our main theorem, it is possible to check that the whole proof still works even in this case.}

For every valuation $r$, define $\CC_r$ to be the set of tuples $(X_1, \ldots, X_l, Y)$ such that
\begin{enumerate}
    \item $X_1, \ldots, X_l, Y$ are all disjoint.
    \item Each $X_i$ is $\delta^2/8$-dense along $E$.
    \item $Y$ is $\delta/2$-dense along $E$.
    \item For each $i$, $X_i$ is consistent with both $p_i$ and $q_i$.
    \item $Y$ is consistent with $r$.
\end{enumerate}
Note that the sets $\CC_r$ are uniformly $\Pi^0_1(E)$. Let $W$ be the set of valuations $r$ such that $\CC_r$ is empty. Note that $W$ is c.e.\ relative to $E$. Since $E$ does not compute a completion of $f$, we can apply Liu's Lemma relative to $E$ to get that one of the following holds.
\begin{enumerate}
    \item There is some $r \in W$ which is $f$-correct.
    \item For every $k$, there are at least $k$ incompatible valuations outside of $W$.
\end{enumerate}

\medskip\noindent\textbf{Case 1: an $f$-correct valuation in $W$.} Suppose that some $r \in W$ is $f$-correct. Thus $\CC_r$ is empty. In particular, $(B_1, \ldots, B_l, A\setminus (B_1\cup\ldots\cup B_l))$ is not in $\CC_r$. By our choice of $B_1,\ldots,B_l$ and $E$, this can only be because $r$ is not consistent with $A\setminus (B_1\cup \ldots \cup B_l)$. In other words, there is some finite set $t \subset A$ and some $n$ such that $\Phi(s\cup t, n)$ and $r(n)$ are both defined and not equal. Moreover, since $r$ is $f$-correct, this implies that $\Phi(s\cup t, n)$ and $f(n)$ are both defined and not equal. In this case we are done because we can set $(s', A') = (s \cup t, A \setminus [\max(s\cup t)])$.

\medskip\noindent\textbf{Case 2: many incompatible valuations outside $W$.} Suppose that there are $k$ many incompatible valuations $r_1,\ldots,r_k$ outside $W$, where $k$ is chosen as in the combinatorial lemma (but with $\delta/2$ rather than $\delta$). Thus we can find sequences 
\begin{align*}
(B_1^1, \ldots, &B_l^1, C_1) \in \CC_{r_1}\\
(B_1^2, \ldots, &B_l^2, C_2) \in \CC_{r_2}\\
&\vdots\\
(B_1^k, \ldots, &B_l^k, C_k) \in \CC_{r_k}.
\end{align*}
Since each $C_i$ is $\delta/2$-dense along $E$, it follows from the combinatorial lemma (and our choice of $k$) that for each $n \in E$, there are $i \neq j$ such that $C_i \cap C_j$ is $\delta^2/8$-dense at $n$. We can now apply the same trick we used in the proof of our main theorem (but using Liu's theorem instead of Seetapun's theorem) to get an infinite set $E_1 \subseteq E$ and a pair $i \neq j$ such that $C_i \cap C_j$ is $\delta^2/8$-dense along $E_1$ and $E_1$ does not compute a completion of $f$.

Now let $B_1', B_2', \ldots, B_l'$ denote $B_1^i, B_2^i, \ldots, B_l^i$ and let $B_{l + 1}'$ denote $C_i \cap C_j$. We have the following facts about the sequence $B_1', \ldots, B_{l + 1}'$:
\begin{enumerate}
    \item They are all disjoint. This follows from the facts that $B_1^i,\ldots,B_l^i$ are all disjoint from each other and from $C_i$ and that $B_{l + 1}' \subseteq C_i$.
    \item For each $B_h'$, there are incompatible valuations $p$ and $q$ such that $B_h'$ is consistent with both $p$ and $q$. For $B_1',\ldots,B_l'$ we can just take $p_h$ and $q_h$. For $B_{l + 1}'$ we can take $r_i$ and $r_j$.
    \item $E_1$ is infinite, does not compute a completion of $f$ and each $B_h'$ is $\delta^2/8$-dense along $E_1$. For $B_1',\ldots,B_l'$, this last part is because $E_1 \subseteq E$. For $B_{l + 1}'$, this follows from our choice of $E_1$.
\end{enumerate}
The point is that $B_1',\ldots,B_{l + 1}'$ is very close to a lawful sequence of length $l + 1$. All that's missing is that $A \setminus (B_1'\cup \ldots \cup B_{l + 1}')$ be $\delta/2$-dense along $E_1$. However, this is easily fixed.

Again using the same trick as in the proof of our main theorem, we can get an infinite set $E_2 \subseteq E_1$ which does not compute a completion of $f$ and such that either
\begin{itemize}
    \item $A\cap (B_1'\cup\ldots\cup B_{l + 1}')$ is $\delta/2$-dense along $E_2$
    \item or $A\setminus (B_1'\cup\ldots\cup B_{l + 1}')$ is $\delta/2$-dense along $E_2$.
\end{itemize}
In the first case, we can use the trick once more to find some infinite $E_3 \subseteq E_2$ which does not compute a completion of $f$ and some $h \leq l + 1$ such that $A \cap B_h'$ is dense along $E_3$ and so we can finish by taking $(s', A') = (s, A\cap B_h')$. In the latter case, we have found a lawful sequence of length $l + 1$.
\end{proof}

\subsection{A false generalization}

There is a common generalization of both Theorem \ref{thm:main} and Theorem \ref{thm:mainPA} which at one point the authors thought might be true. Namely, that given \textit{any} partial function $f\colon \N \to \{0, 1\}$ with no computable completion, and any set $A \subseteq \N$ of positive lower density, there is an infinite subset of $A$ that does not compute any completion of $f$. (Theorem \ref{thm:main} is the case when $f$ is already total, and Theorem \ref{thm:mainPA} is the case when $f$ is computable.) However, this is false.

\begin{theorem}\label{thm:PA-counterexample}
    There is a partial function $f \colon \mathbb{N} \to \{0,1\}$ and a set $A \subseteq \mathbb{N}$ of positive lower density such that any infinite subset $B \subseteq A$ computes a completion of $f$.
\end{theorem}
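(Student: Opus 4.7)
The construction must navigate a tension: every infinite subset of $A$ must compute some completion of $f$ (so the class of completions must be broad), yet no computable function may complete $f$ (so the class must avoid the computable degree). Theorem~\ref{thm:mainPA} rules this out when $f$ is computable, so the key escape is to make $f$ itself non-computable, invalidating Liu's Lemma --- the core tool in the proof of Theorem~\ref{thm:mainPA}.

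My plan is to construct $f$ and $A$ simultaneously. I will fix an uncomputable $\Delta^0_2$ set $X$ (for instance, the halting set $K$) and partition $\N$ into finite blocks $I_0, I_1, \ldots$ of rapidly increasing sizes. In block $I_n$, I will use $X(n)$ to determine the structure of $A \cap I_n$; for example, $A \cap I_n$ will occupy one of two specific subsets of $I_n$ according to whether $X(n) = 0$ or $X(n) = 1$, with sizes chosen so that both possibilities have density bounded below by a fixed constant, ensuring $A$ has positive lower density. Simultaneously, I will define $f$ to be undefined at most positions of $I_n$ but constrained at a few ``signature'' positions whose values depend jointly on $X(n)$ and the local structure of $A \cap I_n$.

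For each infinite $B \subseteq A$, a completion of $f$ would be constructed block by block. On blocks $I_n$ where $B \cap I_n \neq \emptyset$, the elements of $B$ in $I_n$ would reveal which subset of $I_n$ is occupied by $A$, hence reveal $X(n)$, allowing $B$ to compute the required signature values. On blocks that $B$ misses entirely, $f$ either imposes no constraint or imposes one that a default value satisfies. The resulting completion would be uniformly $B$-computable.

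No computable function should be able to complete $f$, because consistently matching the signature constraints across all blocks requires knowing $X$ on all of $\N$, which is non-computable. The main obstacle will be to design the signatures so that (i) the local data in any single $B \cap I_n$ suffices to recover the correct signature value, (ii) the default values used on unhit blocks never conflict with $f$'s constraints, and (iii) no single computable function matches all signatures simultaneously. Achieving these three properties together --- especially the interplay between (ii) and (iii) --- will be the crux of the argument, and likely requires a careful diagonalization against computable total extensions during the block-by-block construction.
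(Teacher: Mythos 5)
Your conditions (ii) and (iii) cannot both hold in the block architecture you describe, so the crux you flag is not a technicality but a dead end. For any finite block $I_n$, the set $A \setminus I_n$ is infinite, so some infinite $B \subseteq A$ satisfies $B \cap I_n = \emptyset$. That $B$ must produce values on $I_n$ using only its default, and by (ii) the default must then be consistent with $f$ on $I_n$. Since $n$ was arbitrary and the default rule is computable, the all-default function is a computable completion of $f$, contradicting (iii). The only way out is to let the ``default'' depend on $B$ itself, but under your design (i) the decoding of block $I_n$ uses only $B \cap I_n$, so elements of $B$ lying in other blocks carry no information about $f$'s constraints on $I_n$; there is nothing for such a $B$-dependent default to hook onto.

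The actual construction abandons the block decomposition and instead identifies $\N$ with $2^{<\omega}$, so that a single element $\sigma \in A$ is a binary string carrying $|\sigma|$ bits, simultaneously constrained at many positions. One builds $f$ by diagonalizing against the functionals $\Phi_i$ at a sparse sequence $x_0 < x_1 < \cdots$ equipped with a computable marker $\pi(x_i) = i$, choosing $x_{i+1}$ larger than the stage at which $\Phi_i(x_i)$ converges (when it does) and setting $f(x_i) = 1 - \Phi_i(x_i)$ in that case. Then one puts $\sigma \in A$ iff $\sigma(x_i) = f(x_i)$ for every $i$ with $\Phi_i(x_i)$ convergent but not within $|\sigma|$ steps; the spacing guarantees at most one such constraint per string length, so $A$ has lower density at least $1/4$. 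Given an infinite $B \subseteq A$ and a target $x$, take $\sigma \in B$ with $|\sigma| > x$: if $\Phi_{\pi(x)}(x)$ has converged within $|\sigma|$ steps, output $1 - \Phi_{\pi(x)}(x)$; otherwise output $\sigma(x)$, which the definition of $A$ guarantees to agree with $f$. The ``default'' here is $\sigma(x)$ --- it varies with $B$ and is correct precisely because membership in $A$ was defined to make it so. That cross-position, time-delayed encoding of $f$ into the elements of $A$ is the mechanism your block-local plan is missing.
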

\begin{proof}
We will define a partial function $f$ while simulateneously defining $x_0,x_1,x_2,\ldots$.  We will make use of a computable function $\pi \colon \N \to \N$ such that $\pi^{-1}(n)$ is infinite for each $n$. For each $x_i$, we will have $\pi(x_i) = i$. The idea is that while the construction will be non-computable, the function $\pi(x)$ will allow us to computably recover from $x$ the function $\Phi_{\pi(x)}$ that $x$ would have been used to diagonalize against if $x$ was used to diagonalize at all.

Begin with $x_0 = 0$. Suppose that we have defined $x_i$. We have two cases:
\begin{enumerate}
    \item If $\Phi_i(x_i) \downarrow$, let $s_i$ be the number of stages it takes to converge, and define $f(x_i) = 1- \Phi_i(x_i)$. Let $x_{i+1} > s_i$ be such that $\pi(x_{i+1}) = i+1$.
    \item Otherwise, if $\Phi_i(x_i) \uparrow$, leave $f(x_i)$ undefined, and let $x_{i+1} > x_i$ be such that $\pi(x_{i+1}) = i+1$.
\end{enumerate}
For any $x$ outside of $\{x_0, x_1, \ldots\}$, $f(x)$ is undefined.

Now define $A \subseteq 2^{< \omega}$ as follows. Put $\sigma \in A$ if whenever $\Phi_i(x_i) \downarrow$ but $\Phi_{i,|\sigma|}(x_i) \uparrow$, $\sigma(x_i) = f(x_i)$. We claim that $A$ has lower density at least $1/4$. For this, it is important that in identifying $2^{< \omega}$ with $\mathbb{N}$, we list out the finite binary strings in order of increasing length. Thus it suffices to show that for each $n$, there is some $x < n$ and $t \in \{0,1\}$ such that $A \cap 2^n \supseteq \{ \sigma \in 2^n \mid \sigma(x) = t\}$. If not, there would be $i < j$ with $x_i < x_j < n$ such that
\begin{enumerate}
	\item $\Phi_i(x_i) \downarrow$ but $\Phi_{i,n}(x_i) \uparrow$ and
	\item $\Phi_j(x_j) \downarrow$ but $\Phi_{j,n}(x_j) \uparrow$.
\end{enumerate}
From (1), it follows that $n < s_i$, but by construction $x_j > s_i$. This contradicts the choice of $x_j < n$.

Now suppose that $B$ is an infinite subset of $A$. We will compute from $B$ a completion $g$ of $f$. For each $x$, look for some $\sigma \in B$ with $\sigma(x) \downarrow$. Check whether $\Phi_{\pi(x),|\sigma|}(x) \downarrow$. If so, define $g(x) = 1- \Phi_{\pi(x),|\sigma|}(x)$. Otherwise, define $g(x) = \sigma(x)$. It is easy to see that $g$ is total.

To see that $g$ is a completion of $f$, it suffices to check that for all $i$, $g(x_i) = f(x_i)$. Let $\sigma \in B$ be the string used to define $g(x_i)$. If $\Phi_{i,|\sigma|}(x_i) \downarrow$, then we defined both $g(x_i)$ and $f(x_i)$ to be $1-\Phi_{i}(x_i)$. Otherwise, if $\Phi_{i,|\sigma|}(x_i) \uparrow$, then we defined $g(x_i) = \sigma(x_i)$, and if $f(x_i)$ is defined then $\Phi_i(x_i) \downarrow$ and so, as $\sigma \in A$, $\sigma(x_i) = f(x_i)$.
\end{proof}

\section{Preserving Kolmogorov complexity}
\label{sec:kolmogorov}

Recall from the introduction that for any string $\sigma$ and any collection of sets $\F \subseteq \powerset(\N)$, we define
\[
  C(\sigma \mid \F) = \max_{X \in \F}C^X(\sigma).
\]
Also recall that $C(\sigma) - C(\sigma \mid [A]^\omega)$ can be thought of as the number of bits of information about $\sigma$ coded into all infinite subsets of $A$. The goal of this section is to investigate this quantity in the case where $A$ is a set of positive lower density.

We noted in the introduction that it is possible to encode about $\log(1/\delta)$ bits of information into all infinite subsets of a set of lower density $\delta$. For example, if $\delta = 1/2^n$ then for any binary string $\sigma$ of length $n$, we can encode $\sigma$ into all infinite subsets of a set $A$ of lower density $1/2^n$ by letting $A$ consist of all numbers $m$ such that the last (i.e.\ least significant) bits of the binary expansion of $m$ match $\sigma$.

However, this is not the only way to encode information into all infinite subsets of a dense set. Roughly speaking, for any fixed number $N$, it is possible to encode ``an arbitrary integer greater than $N$.'' We will now make this precise.

\begin{definition}
For any string $\sigma \in 2^{<\omega}$ and number $N \in \N$, define
\[
  C(\sigma \mid \; \geq N) = \max_{n \geq N}C(\sigma \mid n).
\]
\end{definition}

\begin{proposition}
For any string $\sigma$ and number $N$, there is some set $A$ of lower density 1 such that
\[
  C(\sigma \mid [A]^{\omega}) \leq C(\sigma \mid\; \geq N) + O(1)
\]
where the constant hidden by the $O(1)$ does not depend on $\sigma$ or $N$.
\end{proposition}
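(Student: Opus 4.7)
The plan is to take $A$ to be a cofinite set, namely $A = \{n \in \N : n \geq N\}$. For $n \geq N$ we have $|A\cap[n]|/(n+1) = 1 - N/(n+1) \to 1$, so $\lowerd(A) = 1$. All the work is then to verify the Kolmogorov complexity bound.

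The key observation is that for any infinite $B \in [A]^\omega$, every element of $B$ is at least $N$; in particular $m := \min B$ is a number $\geq N$ that is uniformly computable from $B$ by a fixed Turing functional (search for the least $n$ with $B(n) = 1$). Using this functional as a preamble, any program that computes $\sigma$ from input $m$ can be wrapped into a program that computes $\sigma$ from oracle $B$, with only a constant number of additional bits.

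Specifically, let $p$ be an optimal program witnessing $C(\sigma \mid m)$, so $|p| = C(\sigma \mid m)$. Wrapping $p$ as above produces a program of length $C(\sigma \mid m) + c$ which computes $\sigma$ from oracle $B$, where $c$ is an absolute constant depending only on the fixed universal machine. Thus $C^B(\sigma) \leq C(\sigma \mid m) + c \leq C(\sigma \mid\; \geq N) + c$, since $m \geq N$. Taking the maximum over $B \in [A]^\omega$ yields $C(\sigma \mid [A]^\omega) \leq C(\sigma \mid\; \geq N) + O(1)$, as desired.

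There is no substantive obstacle here; the one point worth noting is that the constant $c$ does not depend on $N$ or $\sigma$. This is clear because the preamble that extracts $\min B$ from the oracle and hands it off as the input to a subprogram is a fixed piece of code, independent of the parameters of the problem.
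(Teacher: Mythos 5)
Your proof is correct and takes essentially the same approach as the paper: take $A = \{n \mid n \geq N\}$, observe that any infinite $B \subseteq A$ uniformly yields a number $\geq N$ (you use $\min B$; the paper leaves the choice implicit), and appeal to the standard machine-simulation argument to absorb the extraction into an $O(1)$ overhead. You have merely spelled out the wrapping step in more detail than the paper does.
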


\begin{proof}
Let $A = \{n \mid n \geq N\}$. If $B$ is an infinite subset of $A$ then all elements of $B$ are greater than or equal to $N$. In other words, from any infinite subset of $A$ we can uniformly extract a number greater than or equal to $N$. Thus $C(\sigma \mid [A]^\omega)$ is at most $C(\sigma \mid\; \geq N) + O(1)$, where the extra constant represents the complexity of the extraction procedure.
\end{proof}

It may not be immediately apparent exactly how much information can be gained from knowing ``an arbitrary integer greater than $N$'' (i.e.\ how large $C(\sigma) - C(\sigma \mid\; \geq N)$ can be). A result of Vereshchagin~\cite{vereshchagin2002kolmogorov} gives a precise characterization.

\begin{theorem}[Vereshchagin]
For any string $\sigma$,
\[
  C^{0'}(\sigma) = \min_{N}C(\sigma \mid\; \geq N) \pm O(1).
\]
\end{theorem}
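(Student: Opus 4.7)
The plan has two parts.

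For the upper bound $\min_N C(\sigma \mid \;\geq N) \leq C^{0'}(\sigma) + O(1)$: I would take a shortest $0'$-program $p$ for $\sigma$ and construct a plain program $q$ of length $|p|+O(1)$ that, on input $n$, enumerates $0'$ for $n$ stages and then simulates $p$ with this approximation as the oracle for at most $n$ steps. Once $n$ is large enough that the approximation agrees with $0'$ on every oracle query $p$ makes during its run and that $p$ halts within $n$ steps, $q(n)$ produces $\sigma$. Hence for some $N$ depending on $p$, $C(\sigma \mid n) \leq C^{0'}(\sigma) + O(1)$ holds for every $n \geq N$.

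For the lower bound $C^{0'}(\sigma) \leq \min_N C(\sigma \mid \;\geq N) + O(1)$: assume $C(\sigma \mid n) \leq k$ for every $n \geq N$ and put $S_n = \{\tau : C(\tau \mid n) \leq k\}$. Since $0'$ decides halting, it computes $S_n$ uniformly in $(n,k)$, and each $S_n$ has at most $2^{k+1}$ elements. The ``eventually persistent'' set $T = \bigcup_M \bigcap_{n \geq M} S_n$ therefore contains $\sigma$ and has cardinality at most $2^{k+1}$. The plan is to describe $\sigma$ relative to $0'$ by specifying its rank $j \leq 2^{k+1}$ within $T$---which fits in $k+1$ bits, encoded so that the length of the description also recovers $k$---together with a constant-size $0'$-decoder that locates the $j$-th element of $T$ by tracking, for each $\tau$ appearing in some $S_n$ with $n \leq s$, the count $U_\tau(s) = |\{n \leq s : \tau \in S_n\}|$, whose deficit $s+1 - U_\tau(s)$ stays bounded exactly for $\tau \in T$.

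The main obstacle I anticipate is the decoding step: the set $T$ is $\Sigma^0_3$ in general and hence not $0'$-enumerable, so one cannot simply ``produce'' $T$ from $0'$. The plan is to exploit the a priori bound $|T| \leq 2^{k+1}$ together with the combinatorial fact that every $\tau \notin T$ must accumulate infinitely many stages $n$ with $\tau \notin S_n$, so that past a bounded number of intermediate stages the top $|T|$ ranks by $U_\tau(s)$ are occupied exactly by $T$ in a stable order and the $j$-th slot settles on $\sigma$. Keeping the overhead absorbed into $O(1)$---rather than the $O(\log k)$ that a naive self-delimiting encoding of $(k,j)$ would cost---is the most delicate part of the argument and is what allows the rank-based description to be converted into a genuinely terminating $0'$-computation without additional oracle cost.
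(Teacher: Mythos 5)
The paper does not prove this theorem---it is cited to Vereshchagin~\cite{vereshchagin2002kolmogorov}---so there is no paper proof to compare against; what follows assesses your sketch on its own merits. The upper bound, $\min_N C(\sigma \mid\; \geq N) \leq C^{0'}(\sigma) + O(1)$, is handled correctly: the plain program that uses its numeric input $n$ simultaneously as a stage bound for approximating $0'$ and as a time bound for running the $0'$-program $p$ is exactly the right device, and once $n$ exceeds the use of $p$'s oracle queries and $p$'s running time, it outputs $\sigma$, giving $C(\sigma \mid n) \leq C^{0'}(\sigma) + O(1)$ for all large $n$.

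The lower bound has a genuine gap, and your self-diagnosis singles out the wrong difficulty. The $O(\log k)$ overhead for encoding $k$ is a non-issue, since, as you note, the length of the description determines $k$. The real obstruction is the decoder's termination. You correctly observe that $T = \bigcup_M \bigcap_{n \geq M} S_n$ is $\Sigma^0_3$ and hence not $0'$-enumerable, and you propose to have the decoder rank strings by $U_\tau(s)$ (equivalently by the deficit $s+1-U_\tau(s)$) and read off the $j$-th slot once ``the top $|T|$ ranks \ldots\ are occupied exactly by $T$ in a stable order.'' That ranking does stabilize in the limit, and its stable top $|T|$ slots are indeed exactly $T$, so the $j$-th slot does converge to $\sigma$. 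But a $0'$-decoder given $(j,k)$ must \emph{halt} and commit to a single output, and nothing $0'$ can compute from $(j,k)$ tells it when stabilization has occurred: at every finite stage the observed data is consistent both with the current top-ranked strings being final and with some of them dropping out later while a slow-starting string overtakes them, because the stabilization time depends on the threshold $N$, which is unbounded in $\sigma$ and carried nowhere in the description. As written your decoding is a limit of $0'$-computable guesses, i.e.\ a $0''$-computation, and so it only yields $C^{0''}(\sigma) \leq k + O(1)$. Natural patches---announce $\tau$ once its deficit has stayed below a fixed bound for a long run, or once it has survived in $\bigcap_{n \leq s} S_n$ for many stages---either never commit or admit unboundedly many false positives, since for any $0'$-computable threshold the sequence $(S_n)_n$ can let a string clear the threshold and then disappear. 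Closing this gap is the actual content of Vereshchagin's theorem and requires a substantially different construction than rank-by-deficit.
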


Together with our proposition above, this shows that for any string $\sigma$, it is possible for all infinite subsets of a dense set $A$ to lower the complexity of $\sigma$ to its $0'$ complexity. This can be combined with the first coding method we mentioned to give a result claimed in the introduction.

\upperbound*

\begin{proof}
Let $k = C^{0'}(\sigma)$ and let $\rho$ be a string witnessing this fact---i.e.\ a string such that $|\rho| = k$ and for the universal oracle machine $U$, $U^{0'}(\rho) = \sigma$. It is a standard fact about Kolmogorov complexity that there is some $N$ such that for all $n \geq N$, $C(\sigma \mid \rho, n) = O(1)$ (this fact is more or less the easy direction of Vereshchagin's theorem mentioned above).

Next, let $m \in \N$ be such that $2^{-m - 1} < \delta \leq 2^{-m}$ and define $A$ by
\begin{align*}
  A = \{n \geq N \mid &\text{ the binary representation of $n$ mod $2^m$}\\
  &\qquad\qquad\text{is equal to the first $m$ bits of $\rho$}\}.
\end{align*}
If the length of $\rho$ is less than $m$ then first extend $\rho$ to length $m$ by adding a $1$ followed by a run of $0$s to the end.

Note that the lower density of $A$ is $2^{-m}$, which is greater than or equal to $\delta$. Furthermore, from any infinite subset $B \subseteq A$, we can uniformly extract a number $n \geq N$. And if we know $m$ then we can also uniformly extract the first $m$ bits of $\rho$. Since $C(\sigma \mid \rho, n) = O(1)$, to reconstruct $\sigma$ from this information, we only need to know the last $k - m$ bits of $\rho$. In other words, if we are given an infinite subset $B \subseteq A$ then to reconstruct $\sigma$ we just need to know $m$ and the last $k - m$ bits of $\rho$. Thus we have
\[
  C(\sigma \mid [A]^\omega) \leq (k - m) + O(C(m)) \leq k - m + O(\log(m)).
\]
Since $\log(1/\delta) \geq m \geq \log(1/\delta) - 1$, this gives us
\[
  C(\sigma \mid [A]^\omega) \leq k - \log(1/\delta) + O(\log\log(1/\delta))
\]
as desired.
\end{proof}

In this section, we will show that this upper bound is essentially optimal. In particular, we will prove the following theorem.

\lowerbound*


It is also possible to prove a corresponding result for Seetapun's theorem. In particular, for any set $A$, define
\[
  \RT^1_2(A) = [A]^\omega \cup [\bar{A}]^\omega.
\]
The quantity $C(\sigma) - C(\sigma \mid \RT^1_2(A))$ can be viewed as the number of bits of information about $\sigma$ that can be encoded into all infinite subsets of $A$ and $\bar{A}$. Just as in the case of subsets of a dense set, it is always possible to lower the complexity of $\sigma$ to its $0'$ complexity.

\begin{proposition}
\label{prop:seetapun_kolmogorov}
For any string $\sigma$, there is some set $A \subseteq \N$ such that
\[
  C(\sigma \mid \RT^1_2(A)) \leq C^{0'}(\sigma) + O(1).
\]
\end{proposition}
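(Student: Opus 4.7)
The plan is to recycle the setup from the proof of Proposition~\ref{prop:ds_kolmogorov}, but to exploit the fact that we have no density constraint on $A$ so as to make the construction significantly simpler. As in that earlier proof, let $k = C^{0'}(\sigma)$ and let $\rho$ be a $0'$-program witnessing this, so $|\rho| = k$ and $U^{0'}(\rho) = \sigma$. The same standard fact used there---namely that the $0'$-computation $U^{0'}(\rho)$ stabilizes after finitely many stages and so can be recovered by simulating $0'$ with a sufficiently good finite approximation---yields a number $N$ and a fixed procedure $q$ of length $O(1)$ (not depending on $\sigma$) such that $q(\rho, n) = \sigma$ for every $n \geq N$.

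The simplification over the earlier proof is that we may now choose $A$ to have finite complement. Set $A = \{n \in \N : n \geq N\}$. Then $\bar{A} = [0, N)$ is finite, so $[\bar{A}]^\omega = \0$, and hence $\RT^1_2(A) = [A]^\omega$. Every $B \in [A]^\omega$ is an infinite set of numbers $\geq N$, so in particular $\min B$ is well defined, is at least $N$, and can be recovered uniformly from the oracle $B$ by enumerating its elements.

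This gives the desired program: with oracle $B$, compute $n \defeq \min B$, then run $q$ on the hardcoded string $\rho$ together with $n$ and return the output. The program's length is $|\rho| + O(1) = C^{0'}(\sigma) + O(1)$, where the $O(1)$ term---representing the code for locating $\min B$ and feeding $(\rho, n)$ to $q$---is a fixed constant independent of $\sigma$ and $B$. Since $\min B \geq N$, the output is $\sigma$. Taking the maximum over $B \in \RT^1_2(A) = [A]^\omega$ gives $C(\sigma \mid \RT^1_2(A)) \leq C^{0'}(\sigma) + O(1)$.

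There is no serious obstacle: the essential observation is that by letting $\bar{A}$ be finite we avoid having to encode the threshold $N$ anywhere in the program, which is exactly what forced the extra $\log(1/\delta) + O(\log\log(1/\delta))$ overhead in the more delicate dense-set version of Proposition~\ref{prop:ds_kolmogorov}. The only slightly subtle point is to invoke the ``uniform'' version of the easy direction of Vereshchagin's theorem---i.e., a single recovery procedure $q$ that works for all $n \geq N$, rather than the $n$-dependent programs witnessing $C(\sigma \mid n) \leq C^{0'}(\sigma) + O(1)$---but this is precisely the same fact already used in the proof of Proposition~\ref{prop:ds_kolmogorov}.
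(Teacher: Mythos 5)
Your proof is correct and follows essentially the same route as the paper: the paper also takes $A = \{n : n \geq N\}$ and appeals to Vereshchagin's theorem (via the earlier proposition that $C(\sigma \mid [A]^\omega) \leq C(\sigma \mid\; \geq N) + O(1)$ for this $A$); you merely inline the easy direction of Vereshchagin's theorem rather than citing it as a black box.
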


\begin{proof}
Once again, by Vereshchagin's theorem it is enough to prove that for any $N$ there is some set $A$ such that $C(\sigma \mid \RT^1_2(A)) \leq C(\sigma \mid\; \geq N) + O(1)$. And for this, we can simply take $A = \{n \mid n \geq N\}$.
\end{proof}

Just like in the case of dense sets, it is possible to show that this is optimal (and unlike in the case of dense sets, the error term here is constant).

\begin{theorem}
\label{thm:seetapun_kolmogorov}
For any string $\sigma$ and set $A \subseteq \N$,
\[
  C(\sigma \mid \RT^1_2(A)) \geq C^{0'}(\sigma) - O(1)
\]
where the constant hidden by the $O(1)$ does not depend on $\sigma$ or $A$.
\end{theorem}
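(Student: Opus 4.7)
\medskip

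The plan is to prove the contrapositive by a Seetapun-style Mathias forcing argument, in parallel with the proof of Theorem~\ref{thm:ds_kolmogorov}. Assume that $C(\sigma \mid \RT^1_2(A)) \leq k$; I will show $C^{0'}(\sigma) \leq k + O(1)$. The strategy is to construct, via Mathias-style forcing relative to $\RT^1_2(A)$, an infinite $X \in \RT^1_2(A)$ such that $U^X(p) \neq \sigma$ for every program $p$ with $|p| \leq k$. Producing such an $X$ would give $C^X(\sigma) > k$, contradicting the hypothesis --- so the construction must break down somewhere, and that breakdown is exactly where I extract a short $0'$-description of $\sigma$.

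First I would set up the forcing. Conditions are pairs $(s, B)$ where $s$ is a finite stem and $B$ is infinite, with the restriction that $B \subseteq A$ or $B \subseteq \bar A$, so that any generic lies in $\RT^1_2(A)$. Since at least one of $A, \bar{A}$ is infinite, the trivial condition exists. Next I would prove the main density lemma: for every program $p$ with $|p| \leq k$ and every condition $(s, B)$, there is an extension $(s', B') \leq (s, B)$ such that for all $X$ compatible with $(s', B')$, $U^X(p) \neq \sigma$. The proof splits into two cases. In the easy case, there is a finite $t \subseteq B$ with $U^{s\cup t}(p)\conv \neq \sigma$, and we can extend directly by $t$. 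In the hard case, for every finite $t \subseteq B$, $U^{s \cup t}(p)$ either diverges or equals $\sigma$. In that situation, $0'$ can search over finite extensions using $s$ and $B$ as parameters and find one on which $U^{s \cup t}(p)$ converges, whose output is forced to be $\sigma$; this yields a $0'$-description of $\sigma$ of length $|p| + O(1)$.

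The main obstacle, as usual for these arguments, lies in keeping the description of $(s, B)$ down to $O(1)$ bits when decoded by $0'$. The stem $s$ grows during the construction, and the reservoir $B$ becomes more complex as one passes to subsets, so a naive accounting inflates the description past $k + O(1)$. To avoid this, I would restrict the forcing to reservoirs drawn from a class of sets uniformly indexed by $0'$ (analogous to the low-basis-theorem bookkeeping in Liu's theorem and in Section~\ref{sec:main-PA}), so that both the stem at a given stage and the reservoir at that stage are recoverable from $0'$ with only a bounded amount of extra information. Once this is arranged, the description of $\sigma$ in the hard case of the density lemma is $|p| + O(1) \leq k + O(1)$ bits, contradicting $C^{0'}(\sigma) > k + c$ for any sufficiently large constant $c$ fixed in advance.

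Finally, given the density lemma, a sufficiently generic $X$ satisfies $U^X(p) \neq \sigma$ for every $p$ with $|p| \leq k$, giving $C^X(\sigma) > k$; combined with $X \in \RT^1_2(A)$ this contradicts $C(\sigma \mid \RT^1_2(A)) \leq k$. Unlike Theorem~\ref{thm:ds_kolmogorov}, there is no density parameter $\delta$ entering the accounting, which is why the error term in Theorem~\ref{thm:seetapun_kolmogorov} is $O(1)$ rather than $O(\log\log(1/\delta))$; the bookkeeping is correspondingly cleaner.
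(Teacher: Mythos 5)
Your approach is genuinely different from the paper's, and unfortunately it has a gap that I don't think can be repaired without the paper's key idea.

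You propose to turn the Mathias-forcing proof of Seetapun's theorem into a Kolmogorov complexity bound: run the density argument for each program $p$ of length $\leq k$, and when the ``hard case'' occurs for some $p$, let $0'$ search over finite extensions of the current condition $(s,B)$ for one that forces convergence to $\sigma$. The problem, which you correctly flag but do not solve, is that $0'$ must be able to reconstruct $(s,B)$ from $O(1)$ bits of advice. Your proposed fix is to ``restrict the forcing to reservoirs drawn from a class of sets uniformly indexed by $0'$, analogous to the low-basis-theorem bookkeeping.'' But this is not what the low basis theorem (or the cone avoiding basis theorem, which is what the forcing in Section~\ref{sec:main} actually calls on) delivers. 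Those theorems give low sets, or cone-avoiding sets --- not sets computable from $0'$, and certainly not sets describable to $0'$ in $O(1)$ bits. A nonempty $\Pi^0_1$ class need not contain any $0'$-computable member, so there is no basis theorem of the kind your argument needs. Without it, the reservoir at the stage where the hard case occurs is an arbitrary infinite set, and $0'$ has no short way to name it; the description cost blows up with the number of stages. The framing ``$C^{0'}(\sigma)\leq |p|+O(1)$'' therefore does not follow.

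The paper sidesteps this entirely by not doing forcing. It introduces the notion of a finite set of strings $F$ being \emph{$k$-safe}: there \emph{exists} a finite partition $X_1,\ldots,X_n$ of $\N$ and an $m$ such that for all $i$ and all finite $s\subseteq X_i$ with $|s|\geq m$, $|\{\tau : C^s(\tau)<k\}\cup F|\leq 2^k$. Three facts are then proved: every $k$-safe set has size $\leq 2^k$ (pigeonhole on the partition); if $F$ is $k$-safe then so is $F\cup\{\sigma\}$ (the Seetapun move: refine the partition by intersecting with $A$ and $\bar A$); and the collection of $k$-safe sets is $\Sigma^0_2$, hence c.e.\ in $0'$ uniformly in $k$. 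The crucial point is that $0'$ never needs to produce or describe a witnessing partition --- it only needs to decide whether one \emph{exists}, which is a $\Sigma^0_2$ question. A greedy $0'$-enumeration of a maximal $k$-safe set then outputs at most $2^k$ strings including $\sigma$, giving $C^{0'}(\sigma)\leq k+O(1)$. Replacing ``construct a witness you can describe'' with ``ask whether a witness exists'' is precisely what dissolves the bookkeeping obstacle you ran into, and it is the content you would need to supply.

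Two smaller points: in your ``hard case'' you assume some finite $t\subseteq B$ exists with $U^{s\cup t}(p)\!\conv$; if none does, then $(s,B)$ already forces divergence and $p$ is handled for free, so the case split needs to be stated more carefully. And the observation that no $\delta$ enters the accounting is a red herring here --- the reason Theorem~\ref{thm:ds_kolmogorov} has a $O(\log\log(1/\delta))$ term is the cost of communicating $\delta$ to $0'$, which is orthogonal to the obstruction above.
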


The remainder of this section is focused on proving Theorems~\ref{thm:ds_kolmogorov} and~\ref{thm:seetapun_kolmogorov}. As a warm-up, and to demonstrate our proof strategy, we will first prove a much easier version of Theorem~\ref{thm:seetapun_kolmogorov}. We will then prove the full theorem and, finally, adapt the techniques from the proof to prove Theorem~\ref{thm:ds_kolmogorov}.

\subsection{Kolmogorov complexity and Seetapun's theorem: easy version}

We will now prove a version of Theorem~\ref{thm:seetapun_kolmogorov} where the $0'$ oracle is replaced by an oracle for a complete $\Sigma^1_2$ set. In particular, let $W$ denote a complete $\Sigma^1_2$ set and fix a string $\sigma$ and set $A \subseteq \N$. We will prove that
\[
  C(\sigma \mid \RT^1_2(A)) \geq C^W(\sigma) - O(1).
\]
This may appear a bit absurd---$W$ is ridiculously powerful---but it helps demonstrate the basic strategy that we will use in our proofs of Theorems~\ref{thm:seetapun_kolmogorov} and~\ref{thm:ds_kolmogorov}.

Here's the core idea of the proof. Suppose $k$ is a number such that $C(\sigma \mid \RT^1_2(A)) < k$. We would like to show that $C^W(\sigma) \leq k + O(1)$. To do this, we will identify some property of $\sigma$ which is shared by at most $2^k$ other strings and which can be recognized using an oracle for $W$. One obvious special property of $\sigma$ is that there is a set $X$ such that $C(\sigma \mid \RT^1_2(X)) < k$. It is not hard to show that at most $2^k$ strings have this property and that $W$ is powerful enough to check which strings have it (in fact, we chose $W$ to make this part obvious).


We will now give a more formal version of this argument. We will describe a program $E$ with the following properties.
\begin{itemize}
\item For each $k$, $E^W(k)$ enumerates a set of at most $2^k$ strings.
\item For any set $A$, string $\sigma$ and number $k$ such that $C(\sigma \mid \RT^1_2(A)) < k$, $\sigma$ is in the set enumerated by $E^W(k)$.
\end{itemize}
By general properties of Kolmogorov complexity, for any number $k$ and string $\tau$ enumerated by $E^W(k)$, $C^W(\tau) \leq k + O(1)$. In particular, if $C(\sigma \mid \RT^1_2(A)) < k$ then $C^W(\sigma) \leq k + O(1)$.

We will now describe $E$. First, say that a string $\tau$ is \term{good-for-$k$} if there is a set $X$ such that $C(\tau \mid \RT^1_2(X)) < k$. We claim that both of the following hold.
\begin{enumerate}
\item It is computable in $W$ to check if a string $\tau$ is good-for-$k$ (i.e.\ the set of pairs $(\tau, k)$ such that $\tau$ is good-for-$k$ is computable relative to $W$).
\item There are at most $2^k$ strings which are good-for-$k$.
\end{enumerate}
To see why the first claim holds, simply note that the statement ``$\tau$ is good-for-$k$'' is equivalent to the statement ``there is some $X$ such that for every $Y$, if $Y \subseteq X$ or $Y \subseteq \bar{X}$, then $C^Y(\tau) < k$,'' which is a $\Sigma^1_2$ formula of $\tau$ and $k$.

To see why the second claim holds, suppose that $\tau_1,\ldots,\tau_l$ are distinct good-for-$k$ strings, as witnessed by $X_1,\ldots,X_l$. Note that the collection of all Boolean combinations of the sets $X_1,\ldots, X_l$ forms a finite partition of $\N$. Thus one of these Boolean combinations is infinite. Let $B$ be this Boolean combination and note that for each $i \leq l$, $B$ is either a subset of $X_i$ or of $\bar{X_i}$ and, either way, is an element of $\RT^1_2(X_i)$. Therefore for each $i \leq l$, $C^B(\tau_i) < k$, which is impossible unless $l \leq 2^k$.

The program $E^W(k)$ works as follows. Given the input $k$, it goes through all strings in some fixed order and uses $W$ to check whether each one is good-for-$k$. Each time it finds a string which is good-for-$k$, it enumerates it. As we showed above, $E^W(k)$ will never enumerate more than $2^k$ strings. Also, for any set $A$, string $\sigma$ and number $k$ such that $C(\sigma \mid \RT^1_2(A)) < k$, it is obvious that $\sigma$ is good-for-$k$ and thus will be enumerated by $E^W(k)$. It follows that for such a $\sigma$ and $k$, $C^W(\sigma) \leq k + O(1)$.

\subsection{Kolmogorov complexity and Seetapun's theorem: hard version}

We will now prove Theorem~\ref{thm:seetapun_kolmogorov}, which states that for any string $\sigma$ and set $A \subseteq \N$, $C(\sigma \mid \RT^1_2(A)) \geq C^{0'}(\sigma) - O(1)$. Our strategy is the same as in the previous subsection---i.e.\ let's assume that $C(\sigma \mid \RT^1_2(A)) < k$ and identify a property of $\sigma$ which is shared by at most $2^k$ strings and which can be recognized using an oracle for $0'$.

In identifying such a property, there is a natural trade-off between how easy the property is to describe and the computational power required to check if a string has the property. In the previous subsection, we just needed a property which can be recognized using a complete $\Sigma^1_2$ set and this allowed us to use a very straightforward property of $\sigma$. Now, however, we want a property that can be recognized by $0'$, which forces us to use a property that has a more intricate description. The key definition is the following.

\begin{definition}
A finite set of strings $F$ is \term{$k$-safe} if there is some finite\footnote{We could take $n = 2^{|F|}$ but this is not needed.} partition $X_1,\ldots, X_n$ of $\N$ and some number $m$ such that for all $i \leq n$ and all finite subsets $s \subseteq X_i$,
\[
  |s| \geq m \implies |\{\tau \mid C^s(\tau) < k\} \cup F| \leq 2^k.
\]
\end{definition}

The idea is that $F$ is $k$-safe as witnessed by a partition $X_1,\ldots, X_n$ if for any $X_i$ and any infinite subset $B$ of $X_i$, we may safely assume that $B$ will assign all strings in $F$ complexity less than $k$. More specifically, it may not actually be the case that $B$ assigns a complexity less than $k$ to each string in $F$, but if we assume that it does then we will never see a contradiction of the form ``$B$ assigns complexity less than $k$ to too many strings.''

Essentially the key property of $\sigma$ is that it is a member of every maximal $k$-safe set. More precisely, we will show that any maximal $k$-safe set has size at most $2^k$ and contains $\sigma$ and that there is some maximal $k$-safe set which can be enumerated by $0'$, uniformly in $k$. The key facts are
\begin{enumerate}
\item Every $k$-safe set has size at most $2^k$.
\item If $F$ is $k$-safe then so is $F \cup \{\sigma\}$.
\item The collection of $k$-safe sets is c.e.\ relative to $0'$ (uniformly in $k$).
\end{enumerate}

Before proving these three facts, let's see how they can be used to finish the proof. Consider the following procedure for using $0'$ to enumerate a maximal $k$-safe set.
{\tt
\begin{enumerate}[1.]
\item Set $F = \0$
\item While true:
\item \quad Search for a string $\tau$ such that $F \cup \{\tau\}$ is $k$-safe
\item \quad If such a $\tau$ is found, enumerate $\tau$ and set $F = F\cup \{\tau\}$
\end{enumerate}}
\noindent It is easy to show by induction that at every step of the above algorithm, $F$ is $k$-safe. Since no $k$-safe set has size more than $2^k$, the above algorithm can add at most $2^k$ strings to $F$. Thus after some point, no new strings will be added to $F$. We claim that at this point, $\sigma$ must be an element of $F$. If not, then since $F\cup\{\sigma\}$ is $k$-safe, eventually the algorithm will discover this fact and add $\sigma$ to $F$, contradicting our assumption that no new elements are added to $F$. All this implies that the algorithm must eventually enumerate $\sigma$ and also that the algorithm will enumerate at most $2^k$ strings.

To finish, observe that the above procedure was uniform in $k$. In other words, there is a single program $E$ such that for each $l$, $E^{0'}(l)$ carries out the above algorithm with $k = l$. As in the previous subsection, for every number $l$ and string $\tau$ enumerated by $E^{0'}(l)$, $C^{0'}(\tau) \leq l + O(1)$. Since $\sigma$ is enumerated by $E^{0'}(k)$, this shows that $C^{0'}(\sigma) \leq k + O(1)$, as desired.

We will now prove each of the three facts above.

\begin{lemma}
\label{lemma:seetapun_kolmogorov1}
Every $k$-safe set has size at most $2^k$.
\end{lemma}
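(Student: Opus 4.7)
The plan is to read the bound straight off the definition. Suppose $F$ is $k$-safe, witnessed by a partition $X_1,\ldots,X_n$ of $\N$ and a number $m$. Since the $X_i$ partition $\N$ and $\N$ is infinite, at least one block, say $X_{i_0}$, is infinite. Pick any finite $s \subseteq X_{i_0}$ with $|s| \geq m$ (such an $s$ exists because $X_{i_0}$ is infinite). The defining implication then yields
\[
  |\{\tau \mid C^s(\tau) < k\} \cup F| \leq 2^k,
\]
and since $F$ is contained in the set on the left, we conclude $|F| \leq 2^k$.

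There is essentially no obstacle to overcome: the work is done by the definition itself, which bakes in an upper bound of $2^k$ on the cardinality of $\{\tau \mid C^s(\tau) < k\} \cup F$ for a suitable $s$. The only minor point to verify is that a suitable $s$ exists in the first place, which uses only that $\N$ is infinite and so at least one block of the partition must be as well. Every subsequent step in the section (closure under adding $\sigma$, and enumerability relative to $0'$) is where the genuinely non-trivial combinatorics lies; this first fact is really just a sanity check confirming that the cardinality threshold $2^k$ in the definition is meaningful for $F$ itself.
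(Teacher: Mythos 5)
Your proof is correct and takes essentially the same approach as the paper: both locate an infinite block $X_{i_0}$ of the witnessing partition and invoke the cardinality bound from the definition of $k$-safety. If anything, your version is slightly cleaner: the paper passes through $C^{X_{i_0}}$ (the complexity relative to the infinite set), leaving it as ``straightforward to check'' that the bound carries over, whereas you apply the definition directly to a finite $s \subseteq X_{i_0}$ with $|s| \geq m$, which is exactly what the definition quantifies over and avoids any appeal to the relationship between $C^s$ and $C^{X_{i_0}}$.
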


\begin{proof}
Suppose $F$ is a $k$-safe set. Let $X_1,\ldots, X_n$ be a partition witnessing that $F$ is $k$-safe. Since $X_1,\ldots,X_n$ is a finite partition of $\N$, some $X_i$ must be infinite. It is straightforward to check that since $X_i$ is infinite, we must have
\[
  |\{\tau \mid C^{X_i}(\tau) < k\} \cup F| \leq 2^k
\]
which implies that $|F| \leq 2^k$.
\end{proof}

\begin{lemma}
\label{lemma:seetapun_kolmogorov2}
If $F$ is a $k$-safe set then so is $F\cup \{\sigma\}$.
\end{lemma}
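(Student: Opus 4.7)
The plan is to refine the partition witnessing $k$-safety of $F$ by intersecting each piece with $A$ and $\bar A$. Explicitly, if $X_1, \ldots, X_n$ and $m$ witness $k$-safety of $F$, I would use as the new partition
\[
  \{X_i \cap A : i \leq n\} \cup \{X_i \cap \bar A : i \leq n\},
\]
which partitions $\N$ into at most $2n$ pieces, together with a new bound $m' \geq m$ to be chosen sufficiently large.

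To verify that this refinement (with $m'$) witnesses $k$-safety of $F \cup \{\sigma\}$, I would fix a refined piece $Y$ and a finite $s \subseteq Y$ with $|s| \geq m'$. Since $s$ is contained in some $X_i$ and $m' \geq m$, the $k$-safety of $F$ yields $|\{\tau \mid C^s(\tau) < k\} \cup F| \leq 2^k$. It therefore suffices to show $\sigma \in \{\tau \mid C^s(\tau) < k\} \cup F$. If $\sigma \in F$ this is immediate; otherwise I want $C^s(\sigma) < k$. When $Y$ is finite, the condition for this piece is vacuous once $m' > |Y|$. When $Y$ is infinite, $Y$ is an infinite subset of $A$ or $\bar A$, so $Y \in \RT^1_2(A)$, and by hypothesis $C^Y(\sigma) \leq C(\sigma \mid \RT^1_2(A)) < k$. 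This gives a program of length less than $k$ that computes $\sigma$ from oracle $Y$, and for appropriately large finite $s \subseteq Y$ the same program works with oracle $s$, yielding $C^s(\sigma) < k$.

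The main technical obstacle I expect is the transition from $C^Y(\sigma) < k$ to $C^s(\sigma) < k$ for finite $s \subseteq Y$: the obvious argument wants $s$ to contain the initial data of $Y$ queried by the witnessing program, which is a containment condition not immediate from $|s| \geq m'$ alone. Overcoming this should be possible by exploiting that only finitely many refined pieces and finitely many candidate programs of length less than $k$ are in play, so the total resources (maximum queries, runtime, length of hard-coded initial data) needed across all pieces are uniformly bounded, and $m'$ can be chosen large enough to absorb them simultaneously.
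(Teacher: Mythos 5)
Your choice of partition ($X_i \cap A$ and $X_i \cap \bar A$) and the overall structure match the paper, but the key step has a gap that you flag but misjudge as a fixable technicality. You try to conclude $C^s(\sigma) < k$ for an arbitrary finite $s \subseteq Y$ with $|s| \geq m'$ by starting from $C^Y(\sigma) < k$. This direction of transfer does not hold, and not merely because of runtime or query bounds: the oracle $s$ answers queries according to membership in $s$, while the oracle $Y$ answers according to membership in $Y$, and a finite $s \subsetneq Y$ will generally give different answers than $Y$ on numbers in $Y \cap [\max(s)] \setminus s$. Since the definition of $k$-safe quantifies over \emph{all} finite $s$ of sufficient size, including ones that are sparse, scattered subsets of $Y$, there is no way to choose $m'$ large enough to force $C^s(\sigma) < k$; for such $s$ the witnessing program for $Y$ simply computes something else. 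So the statement you are trying to establish, namely $\sigma \in \{\tau \mid C^s(\tau) < k\} \cup F$, can fail.

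The paper's proof uses the opposite, and safe, direction of the transfer. Given the arbitrary $s \subseteq X_i \cap A$ with $|s| \geq m'$ (which forces $X_i \cap A$ infinite by the choice of $m'$), it picks an \emph{infinite} $B \subseteq X_i \cap A$ that agrees with $s$ below $\max(s)$. Because $B$ extends $s$ as an oracle, one gets the containment $\{\tau \mid C^s(\tau) < k\} \subseteq \{\tau \mid C^B(\tau) < k\}$, and this direction is unproblematic since a short program with the resource bounds from $C^s$ gives the same answer with oracle $B$. Then $\sigma \in \{\tau \mid C^B(\tau) < k\}$ because $B$ is infinite and $B \in \RT^1_2(A)$, and the $k$-safety of $F$ applies to $B$ as an infinite subset of $X_i$ to bound $|\{\tau \mid C^B(\tau) < k\} \cup F| \leq 2^k$. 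So the bound is established for the superset $\{\tau \mid C^B(\tau) < k\} \cup F$ rather than by trying to get $\sigma$ into the smaller set $\{\tau \mid C^s(\tau) < k\}$. Rewriting your verification with this detour through $B$ repairs the argument; as written, the crucial step does not go through.
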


\begin{proof}
Suppose that $F$ is $k$-safe as witnessed by $X_1,\ldots,X_n$ and $m$. Recall that the string $\sigma$ and set $A$ have the property that for all infinite subsets $B$ of $A$ or $\bar{A}$, $C^B(\sigma) < k$. The main idea of this proof is that the fact that $F\cup\{\sigma\}$ is $k$-safe can be witnessed by the partition
\[
  X_1\cap A, X_1\cap \bar{A}, \ldots, X_n\cap A, X_n\cap\bar{A}.
\]
More precisely, let $m'$ be a number larger than both the size of any finite piece of this partition and $m$. We will show that $F\cup\{\sigma\}$ is $k$-safe as witnessed by $m'$ and $X_1\cap A, X_1\cap \bar{A},\ldots, X_n\cap A, X_n\cap \bar{A}$.

To prove this claim, suppose $s$ is a finite subset of some set in this partition and $|s| \geq m'$. Without loss of generality, let's assume that $s\subseteq X_1\cap A$. By our choice of $m'$ and the assumption that $|s| \geq m'$, $X_1\cap A$ must be infinite. Thus we can find some subset $B\subseteq X_1\cap A$ which agrees with $s$ below $\max(s)$ and which is infinite. It is straightforward to check the following facts
\begin{enumerate}
\item Since $B$ is an infinite subset of $X_1$, $|\{\tau \mid C^B(\tau) < k\} \cup F| \leq 2^k$.
\item Since $s$ is an initial segment of $B$, $\{\tau \mid C^s(\tau) < k\} \subseteq \{\tau \mid C^B(\tau) < k\}$.
\item Since $B$ is an infinite subset of $A$, $C^B(\sigma) < k$ and thus $\sigma \in \{\tau \mid C^B(\tau) < k\}$.
\end{enumerate}
Putting these together, we have
\begin{align*}
  \{\tau \mid C^s(\tau) < k\}\cup F \cup \{\sigma\} &\subseteq \{\tau \mid C^B(\tau) < k\} \cup F \cup \{\sigma\}\\
  &= \{\tau \mid C^B(\tau) < k\} \cup F
\end{align*}
and thus
\[
  |\{\tau \mid C^s(\tau) < k\}\cup F \cup \{\sigma\}| \leq |\{\tau \mid C^B(\tau) < k\}\cup F| \leq 2^k
\]
as desired.
\end{proof}

\begin{lemma}
\label{lemma:seetapun_kolmogorov3}
The collection of $k$-safe sets is c.e.\ relative to $0'$, uniformly in $k$.
\end{lemma}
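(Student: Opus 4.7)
The plan is to show that $k$-safety is a $\Sigma^0_2$ property of $(F, k)$, uniformly; since every $\Sigma^0_2$ set is c.e.\ relative to $0'$, this gives the lemma. First I would encode a partition $X_1, \ldots, X_n$ of $\N$ as a function $f \colon \N \to \{1, \ldots, n\}$ via $X_i = f^{-1}(i)$, so that $F$ is $k$-safe iff there exist $n, m \in \N$ and $f \colon \N \to \{1, \ldots, n\}$ such that for every finite $s \subseteq \N$ and every $i \leq n$,
\[
  \bigl(|s| \geq m \text{ and } f(x) = i \text{ for all } x \in s\bigr) \implies |\{\tau \mid C^s(\tau) < k\} \cup F| \leq 2^k.
\]
For fixed $n, m, F, k$, the collection $\CC_{n,m,F,k}$ of such $f$ is a $\Pi^0_1$ class in $\{1, \ldots, n\}^\N$: the condition on $f$ is a countable conjunction indexed by pairs $(s, i)$, each conjunct depends only on $\{f(x) \mid x \in s\}$, and each conjunct is uniformly decidable because $C^s(\tau)$ is computable (as noted in the preliminaries) and the finite set $\{\tau \mid C^s(\tau) < k\}$ is computable from $(s, k)$ (there are fewer than $2^k$ programs of length $<k$, each running for at most $\max(s)$ steps). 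Let $T_{n,m,F,k}$ denote the corresponding computable, finitely branching tree.

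By K\"onig's lemma, $\CC_{n, m, F, k}$ is nonempty iff $T_{n, m, F, k}$ is infinite, and ``$T_{n, m, F, k}$ is infinite'' is the $\Pi^0_1$ statement ``for every $N$, some string of length $N$ lies in $T_{n, m, F, k}$'' (bounded quantification over strings of length $N$ combined with a decidable membership test). Therefore ``$F$ is $k$-safe,'' which says ``$\exists n, m$ such that $T_{n, m, F, k}$ is infinite,'' is $\Sigma^0_2$ uniformly in $F$ and $k$, and applying the standard relativization that $\Sigma^0_2$ predicates are precisely those c.e.\ in $0'$ finishes the proof.

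There is no real obstacle here; the argument works because the definition of $k$-safety was designed so that the witnessing partition and bound sit inside a $\Pi^0_1$ class, with the existential quantifiers over $n$ and $m$ on the outside. The only mild technicality is verifying that $T_{n, m, F, k}$ is computable, which reduces to the observation that $\{\tau \mid C^s(\tau) < k\}$ is a finite computable set for each finite $s$.
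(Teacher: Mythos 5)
Your argument is correct and follows essentially the same route as the paper: show that ``$m$ and the partition witness $k$-safety'' is a uniformly $\Pi^0_1$ property of the witnesses, invoke K\"onig's lemma to convert nonemptiness of the resulting $\Pi^0_1$ class into a $\Pi^0_1$ arithmetic statement, and then observe that the outer existential quantifiers over $n$ and $m$ make ``$F$ is $k$-safe'' a $\Sigma^0_2$ predicate, hence c.e.\ in $0'$. The only cosmetic difference is that you encode the partition as a single coloring $f \in \{1,\ldots,n\}^\N$ rather than as a join $X_1 \oplus \cdots \oplus X_n$; both encodings work, and your version dispenses with the need to impose the partition axioms as side conditions on the class.
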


\begin{proof}
Given a number $k$, a finite set of strings $F$, a number $m$ and sets $X_1,\ldots,X_n \subseteq \N$, the statement that $m$ and $X_1,\ldots,X_n$ witness that $F$ is $k$-safe is uniformly $\Pi^0_1$. In other words, there is some computable tree $T(k, F, m, n)$ such that $X_1\oplus \ldots \oplus X_n$ is a path through $T(k, F, m, n)$ if and only if $m$ and $X_1,\ldots,X_n$ witness that $F$ is $k$-safe and, furthermore, $T(k, F, m, n)$ is uniformly computable in $k$, $F$, $m$ and $n$.

By K\"onig's lemma, $T(k, F, m, n)$ has an infinite path if and only if $T(k, F, m, n)$ is itself infinite. Thus to check if $F$ is $k$-safe, it suffices to check if there are $m$ and $n$ such that $T(k, F, m, n)$ is infinite. This is a $\Sigma^0_2$ property (uniformly in $k$ and $F$) and thus c.e.\ relative to $0'$.
\end{proof}

\subsection{Kolmogorov complexity and dense sets}

We will now prove Theorem~\ref{thm:ds_kolmogorov}. Actually, we will instead prove the following apparently weaker version and then derive Theorem~\ref{thm:ds_kolmogorov} as a corollary.

\begin{theorem}
\label{thm:ds_kolmogorov2}
For any string $\sigma$ and set $A \subseteq \N$ which is $\delta$-dense for $\delta \in (0, 1]$,
\[
  C(\sigma \mid [A]^{\omega}) \geq C^{0'}(\sigma) - \log(1/\delta) - O(\log\log(1/\delta))
\]
where the constant hidden by the $O(\cdot)$ notation does not depend on $\sigma$ or $A$.
\end{theorem}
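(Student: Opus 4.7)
The plan is to adapt the hard-version proof of Theorem~\ref{thm:seetapun_kolmogorov}. The key new definition is the following: a finite set of strings $F = \{\tau_1,\ldots,\tau_L\}$ is \emph{$(k,\delta)$-safe} if there exist $\delta$-dense sets $A_1,\ldots,A_L \subseteq \N$ and a number $m$ such that for every $J \subseteq \{1,\ldots,L\}$ and every finite $s \subseteq X_J$ with $|s| \geq m$,
\[
  |\{\tau : C^s(\tau) < k\} \cup \{\tau_j : j \in J\}| \leq 2^k,
\]
where $X_J = \bigcap_{j \in J} A_j \cap \bigcap_{j \notin J} \overline{A_j}$. The sets $X_J$ partition $\N$; the intuition is that within $X_J$, any sufficiently generic infinite subset $B$ lies in $A_j$ exactly when $j \in J$, so the strings $\tau_j$ with $j \in J$ can be absorbed into the low-complexity set $\{\tau : C^B(\tau) < k\}$.

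I would then prove four key facts, each analogous to the Seetapun case. First, a combinatorial lemma: any $\delta$-dense sets $A_1,\ldots,A_L$ admit some $S \subseteq [L]$ with $|S| \geq \delta L$ such that $\bigcap_{j \in S} A_j$ is infinite; this follows by averaging (each $n$ lies on average in at least $\delta L$ of the $A_j$'s, so infinitely many $n$ lie in at least $\lceil \delta L \rceil$ of them) and then pigeonholing over the finitely many $\lceil \delta L \rceil$-subsets of $[L]$. Second, the cardinality bound $|F| \leq 2^k/\delta$: apply the combinatorial lemma to the witnesses, locate a piece $X_J$ with $J \supseteq S$ that is infinite, and use safety to conclude $|J| \leq 2^k$, whence $\delta L \leq 2^k$. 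Third, closure under adding a witnessed string: if $F$ is $(k,\delta)$-safe and $C(\sigma \mid [A]^\omega) < k$ for some $\delta$-dense $A$, then $F \cup \{\sigma\}$ is $(k,\delta)$-safe with witnesses $A_1,\ldots,A_L,A$. The proof mirrors Lemma~\ref{lemma:seetapun_kolmogorov2}: each old piece $X_J$ refines into $X_J \cap A$ and $X_J \cap \overline{A}$; for either refined piece and an appropriate test $s$, pick an infinite $B$ in the piece agreeing with $s$ below $\max(s)$. In the $A$-subpiece the new index set includes $L+1$ and $\sigma \in \{\tau : C^B(\tau) < k\}$ (so $\sigma$ is absorbed); in the $\overline{A}$-subpiece the new index set excludes $L+1$ so $\sigma$ need not be counted. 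Fourth, $(k,\delta)$-safety is $\Sigma^0_2$ and hence c.e.\ in $0'$, uniformly in $k$ and $\lceil \log(1/\delta) \rceil$, by exactly the K\"onig's-lemma-on-trees argument of Lemma~\ref{lemma:seetapun_kolmogorov3}.

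The conclusion then parallels the Seetapun case: a $0'$-algorithm greedily enumerates a maximal $(k,\delta)$-safe set, which by closure contains $\sigma$ whenever $k > C(\sigma \mid [A]^\omega)$, and by the cardinality bound has at most $2^k/\delta$ elements. A program that reads a self-delimited encoding of $\lceil \log(1/\delta) \rceil$ followed by the $k + \lceil \log(1/\delta) \rceil$-bit index of $\sigma$ (with $k$ recovered from the program's total input length) then yields $C^{0'}(\sigma) \leq k + \log(1/\delta) + O(\log \log(1/\delta))$. The main obstacle to overcome is finding the correct safety definition: the naive Seetapun analogue, with a single partition of $\N$ and uniform bound $2^k$, only yields $|F| \leq 2^k$ and gives no gain from the density $\delta$. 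The resolution is to index safety by the subset $J$ of witnesses contained in each piece, so that pieces with few $A_j$'s carry weaker constraints; the combinatorial lemma then guarantees that some infinite piece has $|J| \geq \delta L$, which is what forces the desired $2^k/\delta$ cardinality bound.
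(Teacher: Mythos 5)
Your proposal is correct and takes essentially the same route as the paper. The paper's key definition (``$k$-safe at density $\delta$'') requires the $2^k$-bound for every finite $s$ contained in the intersection of an arbitrary subcollection $X_{i_1}\cap\cdots\cap X_{i_l}$ of witness sets (with the associated strings being $\tau_{i_1},\ldots,\tau_{i_l}$), whereas your version requires it only for $s$ contained in a single \emph{cell} $X_J=\bigcap_{j\in J}A_j\cap\bigcap_{j\notin J}\overline{A_j}$ of the refinement generated by the witnesses. Your condition is formally weaker, but, as your sketch shows, it still suffices: the averaging-plus-pigeonhole lemma produces a set $S$ with $|S|\ge\delta L$ and $\bigcap_{j\in S}A_j$ infinite, hence some infinite cell $X_{J_0}$ with $J_0\supseteq S$, which gives the bound $|F|\le 2^k/\delta$; the closure lemma with witnesses $A_1,\ldots,A_L,A$ goes through by splitting each old cell into its $A$ and $\overline A$ parts; and the $\Pi^0_1$-in-the-witnesses / K\"onig's lemma argument gives enumerability in $0'$ uniformly in $k$ and $n=\lceil\log(1/\delta)\rceil$. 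The final self-delimited encoding yields exactly the stated bound, matching the paper's conclusion that $C^{0'}(\sigma)\le k+n+O(\log n)$. The only substantive thing you glossed over --- correctly, but worth knowing --- is that passing from the finite-$s$ bound to the infinite-$B$ bound requires taking $s$ to be a long enough initial segment of $B$ (as a set), which works because the definition of $C^s$ caps length, running time, and oracle use by $\max(s)$.
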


Note that this theorem differs from Theorem~\ref{thm:ds_kolmogorov} in that the set $A$ is required to be $\delta$-dense rather than to have lower density $\delta$ (essentially $A$ is required to be dense everywhere rather than just dense at all sufficiently large points).

We will begin by showing how to use this theorem to prove Theorem~\ref{thm:ds_kolmogorov}.

\begin{proof}[Proof of Theorem~\ref{thm:ds_kolmogorov} using Theorem~\ref{thm:ds_kolmogorov2}]
The basic point is that for any string $\sigma$ and set $A$ of lower density $\delta$, there is a set $\tilde{A}$ which is $\delta/4$-dense such that
\[
  C(\sigma \mid [\tilde{A}]^\omega) \leq C(\sigma \mid [A]^\omega) + O(1)
\]
and thus Theorem~\ref{thm:ds_kolmogorov2} implies that
\begin{align*}
  C(\sigma \mid [A]^\omega) &\geq C(\sigma \mid [\tilde{A}]^\omega) - O(1)\\
                            &\geq C^{0'}(\sigma) - \log(4/\delta) - O(\log\log(4/\delta))\\
                            &= C^{0'}(\sigma) - \log(1/\delta) - O(\log\log(1/\delta)).
\end{align*}

The set $\tilde{A}$ is simple to describe. Let $N$ be large enough that for all $n \geq N$, $A$ has density at least $\delta/2$ at $n$. Then define
\[
\tilde{A} = \{n \mid n \text{ is even and } n \leq 2N\} \cup \{2n + 1 \mid n \in A\}.
\]
In other words, $\tilde{A}$ contains a copy of $A$ on the odd numbers, together with enough even numbers to make sure $\tilde{A}$ is relatively dense everywhere.

To finish, note that any infinite subset of $\tilde{A}$ must contain an infinite number of odd numbers and thus can uniformly compute an infinite subset of $A$ and so we have
\[
  C(\sigma \mid [\tilde{A}]^\omega) \leq C(\sigma \mid [A]^\omega) + O(1)
\]
as desired.
\end{proof}

We will now prove Theorem~\ref{thm:ds_kolmogorov2}. The proof closely follows the proof from the previous subsection, but with a few additional difficulties. We will begin by modifying the definition of a $k$-safe set.

\begin{definition}
For any $k \in \N$ and rational $\delta \in (0, 1]$, a finite set of strings $F = \{\tau_1,\ldots,\tau_n\}$ is \term{$k$-safe at density $\delta$} if there is a number $m$ and sets $X_1,\ldots, X_n \subseteq \N$ of density at least $\delta$ such that for all $i_1 < i_2 < \ldots < i_l$ and all finite subsets $s \subseteq X_{i_1} \cap \ldots \cap X_{i_l}$,
\[
  |s| \geq m \implies |\{\tau \mid C^s(\tau) < k\} \cup \{\tau_{i_1}, \ldots, \tau_{i_l}\}| \leq 2^k.
\]
\end{definition}

\noindent As in the previous subsection, we have the following claims.
\begin{enumerate}
\item If $F$ is $k$-safe at density $\delta$ then $|F| \leq 2^k/\delta$.
\item Suppose we have a set $A$ and string $\sigma$ such that $A$ is $\delta$ dense and $C(\sigma \mid [A]^{\omega}) < k$. If $F$ is $k$-safe at density $\delta$ then so is $F \cup \{\sigma\}$.
\item The collection of sets which are $k$-safe at density $\delta$ is c.e.\ relative to $0'$, uniformly in $k$ and $\delta$.
\end{enumerate}

Before proving these claims, let's see how to use them to finish the proof. In a similar fashion to the previous subsection, we can find a program $E$ such that for any $k$ and $n$, $E^{0'}(k, n)$ enumerates a maximal set which is $k$-safe at density $2^{-n}$. Furthermore, $E^{0'}(k, n)$ enumerates at most $2^k\cdot 2^n$ strings. By standard facts about Kolmogorov complexity, this implies that for any $\tau$ which is enumerated by $E^{0'}(k, n)$, $C^{0'}(\tau \mid n) \leq k + n + O(1)$. Thus for such a $\tau$ we have
\[
  C^{0'}(\tau) \leq C^{0'}(\tau \mid n) + O(C^{0'}(n)) \leq k + n + O(C^{0'}(n))
\]
and so
\[C^{0'}(\tau) \leq k + n + O(\log(n)). 
\]
Now fix a string $\sigma$ and a set $A$ of density at least $\delta \in (0, 1/2]$. Let $k = C(\sigma \mid [A]^{\omega}) + 1$ and $n$ be such that $2^{-n} < \delta \leq 2^{-n + 1}$. Then by the facts above, we have that $\sigma$ is enumerated by $E^{0'}(k, n)$ and thus
\[
  C^{0'}(\sigma) \leq k + n + O(\log(n)) = k + \log(1/\delta) + O(\log\log(1/\delta)).
\]
Note that in the calculation above we used that $n - 1 \leq \log(1/\delta) < n$.

We will now prove the claims above.

\begin{lemma}
Suppose $F$ is $k$-safe at density $\delta$. Then $|F| \leq 2^k/\delta$.
\end{lemma}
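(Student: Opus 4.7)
The plan is to combine a pigeonhole counting argument on the density witnesses with the defining property of $k$-safety. Let $F = \{\tau_1, \ldots, \tau_n\}$ with $|F| = n$, and fix witnesses $m$ and $X_1, \ldots, X_n$ showing that $F$ is $k$-safe at density $\delta$. For each $j \in \N$, set
\[
  f(j) = |\{i \leq n : j \in X_i\}|, \qquad I_j = \{i \leq n : j \in X_i\}.
\]
Since each $X_i$ is $\delta$-dense, $\sum_{j \leq n'} f(j) = \sum_{i \leq n} |X_i \cap [n']| \geq n\delta(n'+1)$ for every $n'$. The goal is to show $n \leq 2^k/\delta$.

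First I would show that there are infinitely many $j$ with $f(j) \geq \lceil n\delta \rceil$. If only finitely many $j$ satisfied this, then for all sufficiently large $j$ we would have $f(j) \leq \lceil n\delta \rceil - 1 < n\delta$, and the partial sums $\sum_{j \leq n'} f(j)$ would grow at rate strictly less than $n\delta$ per unit of $n'$, contradicting the density lower bound for large $n'$. This is the one step requiring care with floors/ceilings, but it is otherwise routine.

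Next I would apply a second pigeonhole on the index sets $I_j \subseteq \{1, \ldots, n\}$. There are only $2^n$ possible values of $I_j$, so among infinitely many $j$ with $|I_j| \geq \lceil n\delta \rceil$, some fixed $I \subseteq \{1, \ldots, n\}$ with $|I| \geq \lceil n\delta \rceil$ must be realized by at least $m$ different $j$'s; call them $j_1 < \cdots < j_m$ and set $s = \{j_1, \ldots, j_m\}$. By construction, $|s| = m$ and $s \subseteq \bigcap_{i \in I} X_i$.

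Finally, writing $I = \{i_1 < i_2 < \cdots < i_l\}$ with $l \geq \lceil n\delta \rceil$, the defining condition of $k$-safety applied to $s$ and these indices gives
\[
  |\{\tau : C^s(\tau) < k\} \cup \{\tau_{i_1}, \ldots, \tau_{i_l}\}| \leq 2^k.
\]
Because the $\tau_i$'s are distinct, $\{\tau_{i_1}, \ldots, \tau_{i_l}\}$ already has size $l$, so $l \leq 2^k$. Combined with $l \geq n\delta$ this yields $n\delta \leq 2^k$, i.e., $n \leq 2^k/\delta$, as required. The main conceptual hurdle is setting up the correct double-pigeonhole (first on coordinates, then on the subsets they hit) so that both $|s| \geq m$ and $|I| \geq n\delta$ hold simultaneously; once this setup is in place, the $k$-safety condition supplies the bound in one shot.
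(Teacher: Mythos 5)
Your proof is correct and essentially matches the paper's approach: both use a counting/averaging argument on the density hypothesis to locate a subcollection of at least $\delta n$ of the $X_i$'s whose common intersection contains a finite set of size at least $m$, and then invoke the $k$-safety definition on that set to conclude $\lceil \delta n\rceil \leq 2^k$. The paper phrases the counting step probabilistically (expected number of $X_i$'s containing a random point of $[N,N')$) and first extracts an infinite intersection, while you count partial sums of $f$ directly and pigeonhole on the index sets $I_j$ to produce the size-$m$ set; these are interchangeable formulations of the same idea.
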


\begin{proof}
Suppose $F = \{\tau_1,\ldots,\tau_n\}$ and that the sets $X_1,\ldots,X_n$ witness that $F$ is $k$-safe at density $\delta$.

We first claim that there is some collection of at least $\delta n$ many $X_i$'s whose intersection is infinite. To see why, suppose for contradiction that each such collection is finite. Then there is some number $N$ large enough that every $x \geq N$ is in fewer than $\delta n$ of the $X_i$'s. Fix some $N'$ much larger than $N$ and consider a random point $x$ from the interval $[N, N')$. We will derive a contradiction by calculating the expected number of $X_i$'s which contain $x$.

First consider a single fixed $X_i$. Since $X_i$ is $\delta$ dense at $N'$, $X_i$ has at least $\delta N'$ elements less than $N'$ and thus has at least $\delta N' - N$ elements in the interval $[N, N')$. So the probability that $x$ is in $X_i$ is at least
\[
  \frac{\delta N' - N}{N'} = \delta - \frac{N}{N'}.
\]
By linearity of expectation, this implies that the expected number of $X_i$'s which contain $x$ is at least
\[
  \delta n - \frac{Nn}{N'}.
\]
Thus there is some $x$ in the interval $[N, N')$ which is contained in at least $\delta n - \frac{Nn}{N'}$ many $X_i$'s. Since the number of $X_i$'s containing $x$ must be an integer, this $x$ must actually be contained in
\[
  \left\lceil \delta n - \frac{Nn}{N'} \right\rceil
\]
many $X_i$'s. If $N'$ is large enough then this is simply equal to $\ceil{\delta n}$. In other words, $x$ is contained in at least $\delta n$ many $X_i$'s, contradicting our assumption.

We have now shown that there is some collection of at least $\delta n$ many $X_i$'s whose intersection is infinite. Suppose for convenience that these $X_i$'s are simply $X_1,\ldots,X_l$ where $l = \ceil{\delta n}$ and let $B = X_1 \cap \ldots \cap X_l$. Note that by definition of ``$k$-safe at density $\delta$,'' we must have
\[
  |\{\tau \mid C^B(\tau) < k\}\cup\{\tau_1,\ldots,\tau_l\}| \leq 2^k,
\]
which is impossible unless $l \leq 2^k$. To summarize, we have $\delta n \leq 2^k$, which implies that $n \leq 2^k/\delta$, as desired.
\end{proof}

\begin{lemma}
Suppose $A$ is $\delta$ dense, $C(\sigma \mid [A]^{\omega}) < k$ and $F$ is $k$-safe at density $\delta$. Then $F\cup\{\sigma\}$ is also $k$-safe at density $\delta$.
\end{lemma}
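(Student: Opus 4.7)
The plan is to follow the template of Lemma~\ref{lemma:seetapun_kolmogorov2}, with $A$ playing the role that the partition $\{A, \bar{A}\}$ played there. Let $X_1, \ldots, X_n$ and $m$ witness that $F = \{\tau_1, \ldots, \tau_n\}$ is $k$-safe at density $\delta$. I would set $X_{n+1} = A$ (associating it with the new element $\sigma$) and replace $m$ by some $m' \geq m$ chosen larger than the cardinality of every intersection $X_{i_1} \cap \cdots \cap X_{i_l}$ with $i_1 < \cdots < i_l \leq n+1$ that happens to be finite; since there are only $2^{n+1}$ such intersections, $m'$ is well-defined. Because $A$ is $\delta$-dense, $X_{n+1}$ has density at least $\delta$, so the density hypothesis on the witnesses is preserved.

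To verify that $X_1, \ldots, X_{n+1}$ and $m'$ witness that $F \cup \{\sigma\}$ is $k$-safe at density $\delta$, fix $i_1 < \cdots < i_l \leq n+1$ and a finite $s \subseteq X_{i_1} \cap \cdots \cap X_{i_l}$ with $|s| \geq m'$. When $i_l \leq n$ the required bound is exactly the original safety condition for $F$. The only nontrivial case is $i_l = n+1$, so that $\tau_{i_l} = \sigma$ and $s \subseteq X_{i_1} \cap \cdots \cap X_{i_{l-1}} \cap A$. The choice of $m'$ forces this intersection to be infinite, so I would extend $s$ to an infinite $B$ by appending every element of $X_{i_1} \cap \cdots \cap X_{i_{l-1}} \cap A$ lying strictly above $\max(s)$. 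Then $s$ is an initial segment of $B$, and $B$ is simultaneously an infinite subset of $X_{i_1} \cap \cdots \cap X_{i_{l-1}}$ (so the safety of $F$ applies) and an infinite subset of $A$ (so $C^B(\sigma) < k$ by the hypothesis $C(\sigma \mid [A]^\omega) < k$).

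Chaining the inclusion $\{\tau \mid C^s(\tau) < k\} \subseteq \{\tau \mid C^B(\tau) < k\}$ (which uses that $s$ is an initial segment of $B$) with the fact that $\sigma \in \{\tau \mid C^B(\tau) < k\}$ yields
\[
  \{\tau \mid C^s(\tau) < k\} \cup \{\tau_{i_1}, \ldots, \tau_{i_{l-1}}, \sigma\} \subseteq \{\tau \mid C^B(\tau) < k\} \cup \{\tau_{i_1}, \ldots, \tau_{i_{l-1}}\},
\]
and the right-hand side has size at most $2^k$ by the original safety of $F$ applied to the infinite $B$ and the indices $i_1, \ldots, i_{l-1}$. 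The only real obstacle compared to Lemma~\ref{lemma:seetapun_kolmogorov2} is the bookkeeping needed to handle arbitrary intersections $X_{i_1} \cap \cdots \cap X_{i_l}$ rather than a single partition piece; choosing $m'$ to dominate every finite such intersection sidesteps this uniformly. Conceptually, the argument is the natural adaptation of the previous lemma: whatever complexity reduction $A$ provides uniformly over $[A]^\omega$ is inherited by every finite $s$ lying in the appropriate intersection, because $s$ can always be completed to an infinite subset of $A$ sitting inside that intersection.
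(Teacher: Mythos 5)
Your proof is correct and takes the same approach as the paper, which itself defers to the proof of Lemma~\ref{lemma:seetapun_kolmogorov2} after indicating the choice of witnesses $X_1,\ldots,X_n,A$ and $m'$; you have simply filled in the details that the paper leaves as an exercise. The only point worth flagging is that you apply the safety bound directly to the infinite set $B$, which strictly speaking requires passing through finite initial segments of $B$ and taking a union; but this is the same mild abuse the paper makes in Lemma~\ref{lemma:seetapun_kolmogorov1} and Lemma~\ref{lemma:seetapun_kolmogorov2}, so it is fine.
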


\begin{proof}
The proof is essentially the same as the proof of Lemma~\ref{lemma:seetapun_kolmogorov2}. Namely, let $F = \{\tau_1,\ldots,\tau_n\}$ and fix sets $X_1,\ldots,X_n$ and a number $m$ witnessing that $F$ is $k$-safe at density $\delta$. Let $m' > m$ be large enough that for any intersection of finitely many of $X_1,\ldots,X_n, A$, if that intersection happens to be finite, then it has at most $m'$ elements. By copying the proof of Lemma~\ref{lemma:seetapun_kolmogorov2}, one can show that $F\cup\{\sigma\}$ is $k$-safe at density $\delta$, as witnessed by $X_1,\ldots,X_n, A$ and $m'$.
\end{proof}

\begin{lemma}
The collection of sets which are $k$-safe at density $\delta$ is c.e.\ relative to $0'$, uniformly in $k$ and $\delta$.
\end{lemma}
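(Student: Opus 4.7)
The plan is to follow the strategy of Lemma~\ref{lemma:seetapun_kolmogorov3} verbatim, with the only change being that the witnessing data now includes a density requirement on each $X_i$ and the quantification pattern over subsets of intersections is slightly more elaborate. Importantly, in the density setting the number of sets in the witness is pinned down by $n = |F|$, so (unlike the Seetapun version) we do not even have to existentially quantify over a partition size; the only remaining existential quantifier is over the threshold $m$.

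Given $k$, a finite set $F = \{\tau_1, \ldots, \tau_n\}$ of strings, a rational $\delta \in (0,1]$, and a number $m$, I would first check that the statement ``$X_1, \ldots, X_n$ together with $m$ witness that $F$ is $k$-safe at density $\delta$'' is a $\Pi^0_1$ condition on $X_1 \oplus \cdots \oplus X_n$, uniformly in $(k, F, m, \delta)$. This breaks into two pieces. The density requirement that each $X_i$ be $\delta$-dense is $\Pi^0_1$ on $X_i$, since $\delta$ is rational and $\delta$-density at any fixed $N$ is clopen. The main clause requires that for every choice of indices $i_1 < \cdots < i_l \leq n$ (a finite index set) and every finite $s \subseteq X_{i_1} \cap \cdots \cap X_{i_l}$ with $|s| \geq m$, the inequality $|\{\tau \mid C^s(\tau) < k\} \cup \{\tau_{i_1}, \ldots, \tau_{i_l}\}| \leq 2^k$ holds; since $C^s(\tau)$ is computable for finite $s$, each individual clause is decidable, and the universal quantifier over candidate finite $s$'s keeps the whole condition $\Pi^0_1$.

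From this it follows that there is a computable tree $T(k, F, m, \delta)$ whose paths are precisely the tuples $X_1 \oplus \cdots \oplus X_n$ witnessing $k$-safety at density $\delta$ with parameter $m$. By K\"onig's lemma, $T(k, F, m, \delta)$ has an infinite path iff it is infinite. Hence $F$ is $k$-safe at density $\delta$ iff there exists $m$ such that $T(k, F, m, \delta)$ is infinite, a $\Sigma^0_2$ property of $(k, F, \delta)$, which is c.e.\ relative to $0'$ uniformly in $k$ and $\delta$.

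The proof is essentially routine; there is no genuine obstacle, only bookkeeping. The one spot worth being careful about is confirming that the density condition really is $\Pi^0_1$ (which relies on $\delta$ being rational) and that the extra layer of quantification over subfamilies $\{i_1, \ldots, i_l\}$ does not inflate the arithmetical complexity, since $n$ is fixed and there are only finitely many such subfamilies to check.
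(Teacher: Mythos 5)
Your proof is correct and follows essentially the same route as the paper's (which simply defers to the earlier Seetapun-version lemma): express the witnessing condition as $\Pi^0_1$ in $X_1 \oplus \cdots \oplus X_n$, pass to a computable tree, and invoke K\"onig's lemma so that existence of a witness becomes a $\Sigma^0_2$ (hence $0'$-c.e.) condition once the existential over $m$ is added. Your observation that $n = |F|$ is pinned down in the density setting, so that unlike the Seetapun version no extra existential quantifier over the number of sets is needed, is a correct and tidy simplification.
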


\begin{proof}
The proof is essentially the same as the proof of Lemma~\ref{lemma:seetapun_kolmogorov3}. The key point is that the statement ``$\{\tau_1,\ldots,\tau_n\}$ is $k$-safe at density $\delta$, as witnessed by $X_1,\ldots,X_n$ and $m$'' is $\Pi^0_1$.
\end{proof}

\subsection{Open questions}

The presence of the $0'$ oracle in Theorems~\ref{thm:seetapun_kolmogorov} and~\ref{thm:ds_kolmogorov} is somewhat unsatisfying. In one sense it is necessary: Propositions~\ref{prop:seetapun_kolmogorov} and~\ref{prop:ds_kolmogorov} show that it cannot be removed. However, there is another sense in which it may be possible to strengthen the two theorems.

For the sake of concreteness, we will focus on a possible strengthening of Theorem~\ref{thm:seetapun_kolmogorov}. In that theorem, we showed that for any set $A$ and string $\sigma$, $C(\sigma \mid \RT^1_2(A)) \geq C^{0'}(\sigma) - O(1)$. By Vereshchagin's theorem, this is equivalent to showing that for each string $\sigma$, there is some number $N$ such that $C(\sigma \mid \RT^1_2(A)) \geq C(\sigma \mid \; \geq N) - O(1)$. However, this allows the possibility that even for a fixed set $A$, there is a different such $N$ for each $\sigma$. It seems natural to wonder whether this is necessary, or whether, for each set $A$, there is a single $N$ which works for all $\sigma$.

\begin{conjecture}
For any set $A \subseteq \N$, there is a number $N$ such that for any string $\sigma$,
\[
  C(\sigma \mid \RT^1_2(A)) \geq C(\sigma \mid\; \geq N) - O(1).
\]
\end{conjecture}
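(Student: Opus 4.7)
The plan is to strengthen Theorem~\ref{thm:seetapun_kolmogorov} by fixing $A$ and producing a single $N = N(A)$ which uniformly bounds the convergence stages of the $0'$-enumeration used in its proof. Recall the easy direction of Vereshchagin's theorem: once a $0'$-program for $\sigma$ converges at stage $s$, then $C(\sigma \mid n) \leq C^{0'}(\sigma) + O(1)$ for every $n \geq s$, because $n$ itself can be used to bound the simulation. So the goal is to exhibit, for each $A$, a stage bound beyond which the $k$-safe enumeration of Section~6.2 has already committed to every $\sigma$ with $C(\sigma \mid \RT^1_2(A)) < k$, and this bound depends only on $A$ (not on $\sigma$ or $k$).

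First, I would revisit the enumeration procedure $E^{0'}(k)$: it sweeps through strings $\tau$ in order and, for the current $F$, uses $0'$ to check the $\Sigma^0_2$ statement ``there exist $m, n$ such that $T(k, F \cup \{\tau\}, m, n)$ is infinite,'' enumerating $\tau$ whenever the check passes. The relevant quantity is the stage at which this $0'$-computation commits for each enumerated $\tau$, since this is exactly the threshold past which the Vereshchagin simulation can recover $\tau$ from $n$ alone.

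Second, I would attempt to exploit the $A$-dependence of $k$-safety to produce a canonical, $A$-indexed family of witnessing partitions whose ``infiniteness times'' are controlled by a function of $A$ alone. The natural candidate is the iteratively refined partition $X_1 \cap A, X_1 \cap \bar A, \ldots, X_n \cap A, X_n \cap \bar A$ from Lemma~\ref{lemma:seetapun_kolmogorov2}; the idea is that once this $A$-specific partition is in hand, the relevant tree $T$ is essentially determined, and its infiniteness reduces to a combinatorial fact about $A$ rather than about $\sigma$. One could then try to let $N(A)$ be an upper bound for all the relevant ``$T$ is infinite'' witness searches over this canonical family. Combined with the Vereshchagin approximation, for any $n \geq N(A)$ the procedure that simulates $E(k)$ for $n$ steps---using $n$ both to bound the stages of each $\Sigma^0_2$ query and to index into the enumeration---would recover every $\sigma$ with $C(\sigma \mid \RT^1_2(A)) < k$, yielding $C(\sigma \mid n) \leq k + O(1)$.

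The main obstacle, and the reason this is currently a conjecture, is the second step: producing a genuinely uniform $N(A)$. The witnessing partition produced in Lemma~\ref{lemma:seetapun_kolmogorov2} grows with $|F|$, and each newly enumerated $\tau$ can force a refinement whose infiniteness witness is arbitrarily far out; there is no evident mechanism by which $A$ alone should cap these witness stages across all $k$ and all $\sigma \in T_k^A$. A more promising route may be to reorganize the enumeration so that all $\sigma$-dependent information is absorbed into the length-$(k + O(1))$ description $p$, leaving only $A$-intrinsic quantities---perhaps a single $A$-definable partition or a measure-theoretic invariant of $A$---to be bounded by $N$. Alternatively, one might seek a direct characterization of $C(\sigma \mid \RT^1_2(A))$ that bypasses the $0'$-enumeration entirely, in which case the uniform $N$ may emerge naturally. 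Either route appears to require a genuinely new idea beyond the $k$-safe machinery developed here.
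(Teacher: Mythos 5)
The statement you are analyzing is posed in the paper as an open \emph{conjecture}, not a theorem, so there is no proof in the paper to compare against. Your proposal correctly recognizes this and, rather than claiming a proof, gives an honest diagnosis of why the $k$-safe machinery from the proof of Theorem~\ref{thm:seetapun_kolmogorov} does not directly yield the uniform $N = N(A)$ the conjecture demands. Your identification of the obstacle is accurate: in the proof of Lemma~\ref{lemma:seetapun_kolmogorov2}, each newly enumerated string $\tau$ forces a refinement of the witnessing partition by intersecting with $A$ and $\bar{A}$ and replaces $m$ with a larger $m'$ depending on which pieces of the refined partition happen to be finite, and there is no evident mechanism by which $A$ alone bounds the stage at which $0'$ verifies ``$T(k,F,m,n)$ is infinite'' across all $k$ and all $\sigma$. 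The only remark the paper adds beyond stating the conjecture is that the particular set $A$ used to show the bound in Theorem~\ref{thm:seetapun_kolmogorov} is tight is \emph{not} a counterexample: for that $A$ there is a single $N$ with $C(\sigma \mid \RT^1_2(A)) = C(\sigma \mid\; \geq N) \pm O(1)$ for all $\sigma$. This weakly supports the conjecture but does not resolve the uniformity problem you identify. Your two proposed alternative routes (absorbing all $\sigma$-dependence into the length-$(k+O(1))$ description so that only an $A$-intrinsic invariant needs bounding, or seeking a direct characterization of $C(\sigma \mid \RT^1_2(A))$ that bypasses the $0'$-enumeration) are reasonable directions and are not ruled out by anything in the paper.
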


Note that the set $A$ we constructed to show that the bound in Theorem~\ref{thm:seetapun_kolmogorov} is tight is not a counterexample to this conjecture: for that set $A$ there is a single number $N$ such that for all strings $\sigma$, $C(\sigma \mid \RT^1_2(A)) = C(\sigma \mid\; \geq N) \pm O(1)$.




One could also ask for a similar strengthening of Theorem~\ref{thm:ds_kolmogorov}.

\begin{conjecture}
For any set $A \subseteq \N$ of lower density $\delta$, there is a number $N$ and a string $\tau$ of length at most $\log(1/\delta)$ such that for any string $\sigma$,
\[
  C(\sigma \mid [A]^\omega) \geq C(\sigma \mid\; \geq N, \tau) - O(\log\log(1/\delta)).
\]
\end{conjecture}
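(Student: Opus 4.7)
The plan is to refine the proof of Theorem~\ref{thm:ds_kolmogorov2} by extracting from $A$ a single pair $(N,\tau)$ that plays the role played there by the $0'$-oracle. Recall that the earlier proof uses $0'$ to enumerate, for each pair $(k,n)$, a maximal $k$-safe set at density $2^{-n}$ of size at most $2^{k+n}$, which contains every $\sigma$ with $C(\sigma\mid[A]^\omega)<k$; the extra $+n = +\log(1/\delta)$ cost in the bound comes from the bits needed to locate $\sigma$ inside this set. The conjecture asks that an appropriate $\tau$ of length $\log(1/\delta)$ extracted from $A$ let us specify a sub-enumeration of size $2^k$ that still contains $\sigma$, while the $0'$-use is absorbed into ``knowing an arbitrary large $n$.''

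First I would try to factor every maximal $k$-safe-at-density-$\delta$ set $F$ into at most $1/\delta$ pieces $F^{(\tau)}$, indexed by strings $\tau$ of length $\ceil{\log(1/\delta)}$, each of size at most $2^k$. The averaging argument in the proof of the size bound $|F| \leq 2^k/\delta$ already produces a distinguished subcollection of roughly $\delta|F|$ witnessing sets $X_i$ having infinite intersection; letting $\tau$ record the identity of this distinguished subcollection (modulo a coarsening that brings its description length down to $\log(1/\delta)$) gives a natural candidate factorization, computable in $0'$.

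Second, I would select the $\tau$-piece canonically determined by $A$ itself. The idea is to run the construction so that at each stage the witnessing sets for $F$ are drawn from boolean combinations of $A$ with previously chosen sets; then the distinguished subcollection singled out by averaging is determined by $A$'s own density profile, and the piece containing $\sigma$ can be read off from how $\sigma$ sits inside $A$. To push from $0'$ down to ``any large $n$,'' I would then apply Vereshchagin's theorem to the $0'$-approximations used to build this factorization, fixing a single $N$ such that for every $n \geq N$, both the factorization and the correct piece can be reconstructed from $n$ together with $\tau$.

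The main obstacle is the simultaneous quantification over $k$. For each individual $k$ one can plausibly produce a pair $(N_k,\tau_k)$ of the correct shape by the strategy above, but the conjecture demands a single $(N,\tau)$ that works uniformly for all $k$. Forcing the $\tau_k$'s to cohere into a tower (so that a single $\tau$ of length $\log(1/\delta)$ already encodes the branching choice for every higher $k$), and controlling the Vereshchagin stabilization times $N_k$ uniformly in $k$, appears to require a genuinely new combinatorial fact about the structure of maximal $k$-safe sets as $k$ grows. I expect this uniformity step to be the crux of the conjecture, and it may well require a strengthened form of Vereshchagin's theorem tailored to the density-Mathias forcing developed in Section~\ref{sec:main} rather than being a direct consequence of the tools in this paper.
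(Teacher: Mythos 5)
This statement appears in the paper as an open \emph{conjecture} at the end of Section~\ref{sec:kolmogorov}, not as a theorem, and the paper offers no proof. So there is nothing to compare your attempt against: the authors themselves leave it unresolved, and your own closing paragraph correctly recognizes that the argument as sketched does not close.

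That said, your sketch is a reasonable reading of where the difficulty lies, and you identify the uniformity-over-$k$ issue that the conjecture must overcome. A few additional gaps worth noting. First, the proposed ``factorization'' of a maximal $k$-safe-at-density-$\delta$ set $F$ into at most $1/\delta$ pieces indexed by $\tau$ is not actually produced: the averaging argument in the size bound $|F|\leq 2^k/\delta$ yields \emph{some} subcollection of roughly $\delta|F|$ witnessing sets with infinite intersection, but the number of such subcollections is on the order of $\binom{|F|}{\delta|F|}$, not $1/\delta$, and the ``coarsening'' that compresses this choice to $\log(1/\delta)$ bits is exactly what would need to be invented. Second, the proposal has $\tau$ depend on $A$'s ``density profile'' and on Boolean combinations of $A$ with previously chosen witnessing sets, but those witnessing sets are themselves found via a cone-avoiding or compactness argument and are far from canonical; fixing a specific $\tau$ of length $\log(1/\delta)$ from this data is underspecified. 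Third, the $0'$-to-``large $n$'' reduction via Vereshchagin's theorem is applied in the existing proofs pointwise in $\sigma$ (with a different $N$ for each $\sigma$); extracting a single $N$ that stabilizes all the approximations simultaneously is precisely the kind of strengthening of Vereshchagin's theorem that the companion conjecture for $\RT^1_2$ is also asking for, and nothing in the paper provides it. In short: the obstacles you flag are real, and there are more of them; the conjecture remains open.
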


There is another conjecture, also related to Theorem~\ref{thm:main} and Kolmogorov complexity, but quite distinct from the conjectures above, that we would like to mention here. Namely, it might be possible to strengthen Theorem~\ref{thm:main} to say that given $A$ of positive lower density, and $X$ uncomputable, not only is there a subset of $A$ that does not compute $X$, but there is a subset of $A$ which does not help to compress initial segments of $X$. 

\begin{conjecture}
    Let $X$ be any set. For any set $A \subseteq \N$ of lower density $\delta$ there is an infinite subset $B \subseteq A$ such that for infinitely many $n$,
    \[ C^B(X\uh n) \geq C(X\uh n) - O(1).\]
\end{conjecture}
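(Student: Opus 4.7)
The plan is to adapt the density Mathias forcing framework of Section~\ref{sec:main}, strengthening the restriction on reservoirs so that the generic $G$ witnesses the conjecture rather than mere cone avoidance. I would declare a condition to be a Mathias condition $(s, R)$ together with a ``Kolmogorov witness'' $(D, c)$ such that $R$ is $\delta$-dense along $D$ for some $\delta > 0$ and such that $C^D(X \uh n) \geq C(X \uh n) - c$ for infinitely many $n$. The first task is to verify that $(\0, A)$ admits such a witness: a $\Pi^0_1$-class argument along the lines of the proof of Theorem~\ref{thm:ds_kolmogorov2}, combined with the fact that $A$ has positive lower density, should produce an infinite $D$ along which $A$ is dense and along which $X \uh n$ is not compressed by more than some fixed $c_0$ infinitely often.

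The heart of the argument would be an analog of Lemma~\ref{lemma:main}: for every Turing functional $\Phi$ and every savings bound $e$, the set of conditions forcing ``the set of $n$ on which $\Phi(G, \cdot)$ provides a description of $X \uh n$ of length below $C(X \uh n) - e$ is finite'' is dense. Mimicking Lemma~\ref{lemma:main}, given $(s, R, (D, c))$ I would iteratively construct disjoint $B_0, B_1, \ldots \subseteq R$ such that each $B_i$ witnesses that $\Phi$ does not give a short description of some specific $X \uh n_i$, using Corollary~\ref{cor:seetapun2} relative to $D$ to split while preserving the Kolmogorov witness. The $\delta$-density of the $B_i$'s bounds the length of the sequence as in Lemma~\ref{lemma:main}; at termination one either commits to an extension that spoils $\Phi$ at some new $n$, or the failure to extend is used via a $\Pi^0_1$-class argument in the spirit of the $k$-safe sets from Section~\ref{sec:kolmogorov} to show that $D$ itself supplies descriptions of $X \uh n$ shorter than $C(X \uh n) - e$ for cofinitely many $n$, contradicting the Kolmogorov-witness property.

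The main obstacle I expect is the existential quantifier on the constant $c$ in the conjecture. For each fixed $e$, the forcing lemma above would produce a set $B$ handling that particular $e$, but the conjecture demands a single constant $c$ that works for one choice of $B$ across all initial segments. The natural way to handle this is to anchor $c$ to the initial witness constant $c_0$ of $(\0, A, (D_0, c_0))$ and to show that extending conditions inflates the witness constant only by a bounded coding overhead, so that the final constant for $G$ is $c_0 + O(1)$. This probably requires delicate book-keeping compared to Section~\ref{sec:main}, and possibly a finite-injury diagonalization that interleaves the countable list of functionals $\Phi_e$ with a monotone growth of the witness constant. A secondary difficulty is that Kolmogorov-complexity preservation is not closed under Turing reducibility the way cone avoidance is, so the cone avoiding basis theorem used inside Lemma~\ref{lemma:main} cannot be applied verbatim; it would need to be replaced by a bespoke $\Pi^0_1$-class extraction that preserves the Kolmogorov-witness property rather than merely avoiding a Turing cone, and engineering this replacement is where I expect most of the technical work to lie.
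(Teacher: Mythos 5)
This statement is listed in the paper as a \emph{conjecture}; the authors explicitly pose it as an open question and supply no proof, so there is nothing in the paper to compare your sketch against. On its own terms, your sketch is a plausible-sounding plan whose hard steps are all left unargued, and the gaps you flag at the end are not peripheral book-keeping---they are precisely the points at which the argument of Lemma~\ref{lemma:main} would have to change in ways that are not currently known to work.

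First, a small symptom that the details have not been worked through: the base case needs no $\Pi^0_1$-class argument. Since $A$ has positive lower density there is an $N$ with $A$ $\delta$-dense at every $n \geq N$; taking $D = \{n : n \geq N\}$, which is computable, gives $C^D(\sigma) \geq C(\sigma) - O(1)$ for \emph{every} $\sigma$, so $(\0, A)$ trivially admits a witness. Second, and more seriously, the extension step in Lemma~\ref{lemma:main} uses two tools that both preserve cone avoidance: Corollary~\ref{cor:seetapun2} (to thin $D$ while keeping $D \ngeq_T X$) and the cone avoiding basis theorem (to extract $B_{k+1}$ from a $\Pi^0_1(B_0 \oplus \cdots \oplus D_{k+1})$ class while keeping $\ngeq_T X$). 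You correctly observe that the basis theorem has no known complexity-preserving analogue, but the same is true of the Seetapun-style pigeonhole step: there is no ``pigeonhole preserving Kolmogorov complexity of $X$ along infinitely many initial segments'' result to invoke, and your sketch offers no argument that either can be manufactured. Third, the decidability trick at the heart of Lemma~\ref{lemma:main} depends on $X$ appearing only as the target of a K\"onig's-lemma computation---the classes $\CC_{n,b}$ are defined without reference to $X$, which is what lets you compute $X(n)$ by testing finiteness of trees and so derive a contradiction. A Kolmogorov analogue would need classes of sets ``that do not give descriptions of $X\uh n$ below threshold $C(X\uh n) - e$,'' which builds $X\uh n$ and the non-$D$-computable quantity $C(X\uh n)$ directly into the class definition; it is not at all clear how to recover a $\Pi^0_1(D)$ class and a contradiction of the right shape. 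Finally, the constant-management problem you raise is genuine: the forcing you describe handles one savings bound $e$ at a time, and the witness constant $c$ accrues overhead at each stage. The claim that this overhead stays $O(1)$ across the whole construction is an assertion, not an argument, and it is exactly the kind of claim that tends to fail once side information accumulates along a Mathias filter. In short, the proposal is a reasonable statement of a strategy one might try, but it identifies (without resolving) the same obstacles that presumably led the authors to leave this as an open conjecture rather than a theorem.
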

Here we take inspiration from a theorem due to Chaitin (see \cite{downey2010algorithmic}, Theorem 3.4.4) that $X$ is computable if and only if $C(X\uh n) \leq C(n) + O(1)$ for all $n$. If $X$ is uncomputable, there is a function $f\colon \N \to \N$ such that $\limsup_{n \to \infty} f(n) = \infty$ and for all $n$, $C(X\uh n) \geq C(n) + f(n) - O(1)$. If $B \subseteq A$ is an infinite set which preserves the complexity of infinitely many initial segments of $X$, then for infinitely many $n$,
\[
C^B(X\uh n) \geq C(X\uh n) - O(1) \geq C(n) + f(n) - O(1) \geq C^B(n) + f(n) - O(1),
\]
and hence $X$ is not computable relative to $B$, which is exactly the conclusion of Theorem~\ref{thm:main}.

Note that in this last conjecture we must only ask that $C^B(X\uh n) \geq C(X\uh n) - O(1)$ for infinitely many $n$, rather than for all $n$, as whenever $B \subseteq A$ is uncomputable it will given shorter descriptions for infinitely many $n$, and thus $B$ can further compress even some initial segments of the empty set $X = \varnothing$.

\section{The relationship between the main theorem and Seetapun's theorem}

It seems clear that Theorem~\ref{thm:main} and Seetapun's theorem are closely related. Both give limitations on coding information into all infinite subsets of a set and we used Seetapun's theorem in our proof of Theorem~\ref{thm:main}. However, the two theorems are even more closely related than this suggests. In particular, Theorem~\ref{thm:main} directly implies Seetapun's theorem.

To see why, let $X$ be uncomputable and $A \subseteq \N$ be arbitrary. Consider the following set $B \subseteq \N$ of lower density $1/2$:
\[
  B = \{2n \mid n \in A\} \cup \{2n + 1 \mid n \notin A\}.
\]
Given any subset $C \subseteq B$, we can read off a subset $C_{\text{even}}$ of $A$ by looking at the even elements of $C$ and a subset $C_{\text{odd}}$ of $\bar{A}$ by looking at the odd elements of $C$. Moreover, if $C$ is infinite then at least one of $C_{\text{even}}$ and $C_{\text{odd}}$ must be infinite as well. By Theorem~\ref{thm:main}, $B$ must have some infinite subset $C$ that does not compute $X$. But then either $C_{\text{even}}$ is an infinite subset of $A$ which does not compute $X$ or $C_{\text{odd}}$ is an infinite subset of $\bar{A}$ which does not compute $X$.

The argument above does not constitute a new proof of Seetapun's theorem because we used Seetapun's theorem in our proof of Theorem~\ref{thm:main}. However, it might make one wonder whether there is a similar direct proof of Theorem~\ref{thm:main} from Seetapun's theorem (in which case, the somewhat complicated proof of Theorem~\ref{thm:main} that we gave in Section~\ref{sec:main} would be pointless). In this section, we will show that this is not the case. However, we first have to make precise what sort of proof we are trying to rule out.

\subsection{Strong omniscient computable reducibility}

The proof of Seetapun's theorem from Theorem~\ref{thm:main} that we gave above can be seen as an example of a \term{strong computable reduction}, a notion closely related to Weihrauch reducibility and reverse math which was first introduced by Dzhafarov~\cite{dzhafarov2015cohesive}.

\begin{definition}
Given two partial functions $P, Q \colon \Cantor \to \powerset(\Cantor)$, $P$ is \term{strongly computably reducible to $Q$}, written $P \leq_{sc} Q$, if for any $x \in \dom(P)$,
\begin{enumerate}
\item there is some $\tilde{x} \leq_T x$ such that $\tilde{x} \in \dom(Q)$
\item and for any $\tilde{y} \in Q(\tilde{x})$, there is some $y \leq_T \tilde{y}$ such that $y \in P(x)$.
\end{enumerate}
\end{definition}

Intuitively, a partial function $P \colon \Cantor \to \powerset(\Cantor)$ can be seen as a problem: elements $x$ of the domain of $P$ are \term{instances} of the problem and elements $y$ of $P(x)$ are \term{solutions} to the instance $x$. Under this interpretation, a problem $P$ is strongly computably reducible to a problem $Q$ if any instance $x$ of the problem $P$ can compute an instance $\tilde{x}$ of $Q$ such that any solution to $\tilde{x}$ can compute a solution to the original problem $x$.

As we said, the proof of Seetapun's theorem from Theorem~\ref{thm:main} that we gave above can be seen as an example of a strong computable reduction. First, we can define problems corresponding to Seetapun's theorem and Theorem~\ref{thm:main}:
\begin{itemize}
\item $\RT^1_2$ is the problem in which an instance is a set $A \subseteq \N$ and a solution to that instance is an infinite subset of either $A$ or $\bar{A}$.
\item $\DS$ (which stands for ``subset of a dense set'') is the problem in which an instance is a set $A \subseteq \N$ of positive lower density and a solution is an infinite subset of $A$.
\end{itemize}
Next, we can translate the main idea of the proof into a strong computable reduction of $\RT^1_2$ to $\DS$:
\begin{enumerate}
\item Given an instance $A$ of $\RT^1_2$, we can compute the instance $B = \{2n \mid n \in A\} \cup \{2n + 1 \mid n \notin A\}$ of $\DS$.
\item Given a solution $C$ to the instance $B$ (i.e.\ an infinite subset $C \subseteq B$), we can compute a solution to the instance $A$ of $\RT^1_2$ (namely, either $C_{\text{even}}$ or $C_{\text{odd}}$).
\end{enumerate}
Furthermore, it is easy to check that if $P$ and $Q$ are problems such that $P \leq_{sc} Q$ and $X$ is a set such that there is some instance $P$, all of whose solutions compute $X$ then there is some instance of $Q$ all of whose solutions compute $X$. Thus the fact that $\RT^1_2 \leq_{sc} \DS$ plus Theorem~\ref{thm:main} implies Seetapun's theorem.

So to show that there is no proof of this sort of Theorem~\ref{thm:main} from Seetapun's theorem, we can show that there is no strong computable reduction of $\DS$ to $\RT^1_2$. In fact, we will prove a somewhat stronger statement: instead of strong computable reductions of $\DS$ to $\RT^1_2$, we will consider \term{strong omniscient computable reductions}.

Strong omniscient computable reducibility is a natural weakening of strong computable reducibility, introduced by Monin and Patey~\cite{monin2019pi}, in which the transformation of instances is not required to be computable (though the transformation of solutions still is).

\begin{definition}
Given two partial functions $P, Q \colon \Cantor \to \powerset(\Cantor)$, $P$ is \term{strongly omnisciently computably reducible to $Q$}, written $P \leq_{soc} Q$, if for any $x \in \dom(P)$,
\begin{enumerate}
\item there is some $\tilde{x}$ such that $\tilde{x} \in \dom(Q)$
\item and for any $\tilde{y} \in Q(\tilde{x})$, there is some $y \leq_T \tilde{y}$ such that $y \in P(x)$.
\end{enumerate}
Note that the only difference from strong computable reducibility is that here, $\tilde{x}$ is not required to be computable from $x$.
\end{definition}

It is clear that $P \leq_{sc} Q$ implies $P \leq_{soc} Q$, but not always vice-versa, and hence showing that $\DS \nleq_{soc} \RT^1_2$ is a stronger result than showing that $\DS \nleq_{sc} \RT^1_2$.

\subsection{Theorem~\ref{thm:main} is not reducible to Seetapun's theorem}

We will now prove that $\DS \nleq_{soc} \RT^1_2$. In our proof, we will construct infinite sets $A$ and $G$ such that $A$ has lower density at least $1/2$ and no infinite subset of $G$ computes any infinite subset of $A$.

To see why constructing such an $A$ and $G$ is sufficient to prove $\DS \nleq_{soc} \RT^1_2$, suppose for contradiction that such an $A$ and $G$ exist and that $\DS \leq_{soc} \RT^1_2$. Viewing $A$ as an instance of $\DS$, we get a set $B$ (i.e.\ an instance of $\RT^1_2$) such that every infinite subset of $B$ and $\bar{B}$ (i.e.\ every solution to $\RT^1_2(B)$) computes an infinite subset of $A$ (i.e.\ a solution to $\DS(A)$). However, since $G$ is infinite, at least one of $G\cap B$ and $G\cap \bar{B}$ is also infinite and does not compute any infinite subset of $A$.

In our construction of $A$ and $G$, we will use a corollary of the Galvin-Prikry theorem~\cite{galvin1973borel}.

\begin{theorem}[Galvin-Prikry theorem]
For any Borel set $\X \subseteq \powerset(\N)$, there is some infinite set $B \subseteq \N$ such that either all infinite subsets of $B$ are in $\X$ or no infinite subsets of $B$ are in $\X$. 
\end{theorem}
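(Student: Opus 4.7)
The plan is to prove the Galvin--Prikry theorem by the standard route of introducing a slightly larger class of ``completely Ramsey'' sets, showing this class forms a $\sigma$-algebra containing the open sets (in the product topology on $\powerset(\N) \cong 2^{\omega}$), and then specializing to the empty stem.

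More precisely, call a pair $(s, A)$ a \emph{Mathias condition} if $s \subseteq \N$ is finite, $A \subseteq \N$ is infinite, and $\max(s) < \min(A)$; write $[s, A] = \{X \in [\N]^\omega \mid s \subseteq X \subseteq s \cup A\}$. I would say that $\X \subseteq [\N]^\omega$ is \emph{completely Ramsey} if for every Mathias condition $(s, A)$ there exists an infinite $B \subseteq A$ such that either $[s, B] \subseteq \X$ or $[s, B] \cap \X = \0$. Setting $s = \0$ and $A = \N$ recovers the statement of the Galvin--Prikry theorem, so it suffices to prove that every Borel set is completely Ramsey.

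The argument then has three parts. First, I would prove the Nash--Williams/Galvin lemma: every basic open set, and more generally every open set in the product topology, is completely Ramsey. This reduces, via a compactness/fusion argument on $\N$, to the combinatorial fact that given any family $\F$ of finite subsets of $\N$, one can find an infinite $B \subseteq \N$ such that either some element of $\F$ is an initial segment of every $X \in [B]^\omega$, or no element of $\F$ is contained in any $X \in [B]^\omega$; this is exactly Nash--Williams' theorem, and its proof is a direct two-player game / inductive construction. Second, closure under complements is immediate from the definition. Third, and crucially, I would prove closure under countable unions: given $\X_0, \X_1, \ldots$ all completely Ramsey and a condition $(s, A)$, I would build, by recursion on $n$, a decreasing sequence of reservoirs $A \supseteq A_0 \supseteq A_1 \supseteq \cdots$ together with a decreasing sequence of ``extended stems'' so that each $A_n$ decides $\X_n$ in the sense above, and then fuse the $A_n$'s diagonally into a single infinite $B = \{b_0 < b_1 < \cdots\}$ with $b_n \in A_n$. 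A short case analysis shows that $B$ either decides $\bigcup_n \X_n$ on $[s, B]$, or else reduces the problem to one of the $\X_n$'s on a sub-condition, where we already have a decision.

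The main obstacle is the fusion step in the countable-union argument: one must arrange the diagonal so that the combinatorial guarantee ``$[s, A_n] \subseteq \X_n$ or $[s, A_n] \cap \X_n = \0$'' is preserved by the final set $B$, which is not a subset of any individual $A_n$ but only eventually contained in each. This requires one to keep track of finite extensions of the stem at each stage and to arrange, by passing to further infinite subsets if necessary, that the decision made at stage $n$ is uniform across all finite extensions built at later stages. Once this fusion is set up correctly, the closure under countable unions follows, the class of completely Ramsey sets contains all Borel sets by transfinite induction on the Borel hierarchy, and the Galvin--Prikry theorem follows immediately.
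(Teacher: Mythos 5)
The paper does not actually prove the Galvin--Prikry theorem; it is cited as a classical result from the literature, so there is no in-paper proof to compare against. Evaluating your sketch on its own terms: you follow the standard route (completely Ramsey sets, Nash--Williams for open sets, closure under complements, closure under countable unions via fusion), and the first two steps are fine, but the countable-union step contains a genuine gap that the ``short case analysis'' does not resolve.

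Here is the problem concretely. Fix a condition $(s,A)$, and suppose you have fused to obtain $B = \{b_0 < b_1 < \cdots\} \subseteq A$ so that for every $n$ and every $t \subseteq \{b_0,\dots,b_{n-1}\}$, the set $\X_n$ is decided on $[s\cup t, \{b_n, b_{n+1},\dots\}]$. (As an aside, your phrasing ``extensions built at later stages'' is backwards --- the stems $s\cup t$ one must track at stage $n$ come from the elements $b_0,\dots,b_{n-1}$ chosen at \emph{earlier} stages; this is only a wording slip, but it matters for setting up the bookkeeping correctly.) The real difficulty is what happens afterwards. Even if every $\X_n$ is \emph{rejected} at the base stem $s$ (i.e.\ $[s,\{b_n,b_{n+1},\dots\}] \cap \X_n = \0$ for all $n$), it does \emph{not} follow that $[s,B]\cap\bigcup_n\X_n = \0$: a set $C\in[s,B]$ using some of the early elements $b_0,\dots,b_{n-1}$ lies in the condition $[s\cup t,\{b_n,\dots\}]$ for a nonempty $t$, and $\X_n$ might be \emph{accepted} there even though it was rejected at $t=\0$. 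Conversely, if some $\X_n$ is accepted at a nonempty $t$, you get a decision only at the extended stem $s\cup t$, which is not what complete Ramseyness at $(s,A)$ requires (the stem must stay $s$; only the reservoir may shrink). So after the fusion you are not done: membership of $C\in[s,B]$ in $\bigcup_n\X_n$ is determined by whether some decision $\operatorname{dec}(n, C\cap\{b_0,\dots,b_{n-1}\})$ is ``accept,'' and you must still decide \emph{this} set. The way out --- which your sketch does not mention --- is to observe that this residual set is (Ellentuck-)open on $[s,B]$, so one more application of the open-set lemma finishes the argument. Equivalently, one can route the whole countable-union step through the Ellentuck topology: show Ellentuck-open sets are completely Ramsey, show Ellentuck-meager sets are Ramsey null, and conclude that Ellentuck--Baire sets (in particular all Borel sets) are completely Ramsey. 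Either way, the fusion alone is not enough; the step you describe as a ``short case analysis'' is precisely where the remaining work lives, and as written it reduces the problem to a sub-condition with a different stem, which is not the form of decision that the definition demands.
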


\begin{corollary}
For any finite set $k$, Borel coloring $c \colon \powerset(\N) \to k$ and infinite set $B$, there is some infinite set $B' \subseteq B$ such that all infinite subsets of $B'$ have the same color.
\end{corollary}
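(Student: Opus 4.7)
The plan is to deduce the corollary from the Galvin--Prikry theorem by induction on the number of colors $k$, with a preliminary step to relativize Galvin--Prikry from $\powerset(\N)$ to $\powerset(B)$.

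First I would establish the following relativized form of Galvin--Prikry: for any infinite $B \subseteq \N$ and any Borel set $\X \subseteq \powerset(\N)$, there is an infinite $B'' \subseteq B$ such that either every infinite subset of $B''$ lies in $\X$, or no infinite subset of $B''$ lies in $\X$. This follows from Galvin--Prikry by transport of structure: let $\pi \colon \N \to B$ be the increasing enumeration of $B$ (a bijection onto $B$) and define
\[
  \X' = \{S \subseteq \N \mid \pi(S) \in \X\}.
\]
Since $\pi$ is continuous in the obvious sense on the Cantor space, $\X'$ is Borel. Applying Galvin--Prikry to $\X'$ gives an infinite $C \subseteq \N$ that is homogeneous for $\X'$, and then $B'' = \pi(C)$ is an infinite subset of $B$ with the desired property (every infinite subset of $B''$ has the form $\pi(S)$ for some infinite $S \subseteq C$).

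The main argument is by induction on $k$. The case $k = 1$ is trivial: take $B' = B$. For the inductive step, suppose the result holds for colorings into $k$ colors, and let $c \colon \powerset(\N) \to \{0, 1, \ldots, k\}$ be a Borel coloring and $B \subseteq \N$ infinite. The set $\X = c^{-1}(0)$ is Borel, so by the relativized Galvin--Prikry above, there is an infinite $B'' \subseteq B$ such that either all infinite subsets of $B''$ lie in $\X$ or none do. In the first case we are done, with $B' = B''$ and all infinite subsets monochromatically colored $0$. In the second case, $c$ restricted to infinite subsets of $B''$ takes values in $\{1, \ldots, k\}$, so it is (the restriction of) a Borel coloring into $k$ colors, and the inductive hypothesis applied to $B''$ supplies an infinite $B' \subseteq B''$ all of whose infinite subsets receive a common color.

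There is no real obstacle here; the only thing to be careful about is that when we restrict to subsets of $B$, we genuinely need a Galvin--Prikry statement inside $B$ rather than inside $\N$, which is why I included the transport-of-structure step explicitly. The induction itself is standard and requires no further computation.
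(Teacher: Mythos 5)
The paper states this corollary without supplying a proof, so there is no argument to compare against; your proof is correct and is the standard derivation (transport Galvin--Prikry to $\powerset(B)$ along the increasing enumeration, then induct on the number of colors). One small point worth making explicit: in the inductive step you know $c$ takes values in $\{1,\ldots,k\}$ only on infinite subsets of $B''$, not on all of $\powerset(\N)$, so before invoking the inductive hypothesis you should replace $c$ by a genuine Borel $k$-coloring of $\powerset(\N)$ agreeing with it there (e.g.\ redefine the value $0$ to be $1$); your parenthetical ``(the restriction of)'' signals awareness of this, and the fix is immediate.
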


Before proving the existence of $A$ and $G$, we will try to motivate the construction and explain how the Galvin-Prikry theorem will be used. Our goal is to construct $A$ and $G$ such that for all Turing functionals $\Phi$ and all infinite subsets $C \subseteq G$, $\Phi(C)$ is not an infinite subset of $A$. Suppose that instead of having to handle all Turing functionals, we just had to handle a single Turing functional, $\Phi$. By the Galvin-Prikry theorem, there is some infinite set $B_0$ such that either
\begin{enumerate}
\item for all infinite subsets $C \subseteq B_0$, $\Phi(C, 0)\conv = 1$
\item or for all infinite subsets $C \subseteq B_0$, $\Phi(C, 0)\neq 1$ (i.e.\ $\Phi(C, 0)$ either diverges or converges to something besides $1$).
\end{enumerate}
In the first case, we are done: we can take $G = B_0$ and make sure $0 \notin A$ (e.g.\ take $A = \N\setminus \{0\}$). The second case is more difficult. Taking $G = B_0$ is not enough because it does not give us much control over what is computed by infinite subsets of $G$---it just ensures that if $C$ is an infinite subset of $G$ then $0$ is not an element of the set $\Phi(C)$.

However, requiring $G$ to be a subset of $B_0$ does seem like \emph{progress} towards our goal: if we could similarly ensure that for each $n \in \N$ and infinite set $C \subseteq G$, $n$ is not an element of $\Phi(C)$, then we would be done (because this would imply that for each such $C$, $\Phi(C)$ is either not total or computes the empty set).

This suggests the following strategy: form a sequence of sets
\[
  B_0 \supseteq B_1 \supseteq B_2 \supseteq \ldots
\]
where for each $n$, $B_n$ is chosen using the Galvin-Prikry theorem so that either
\begin{enumerate}
\item for all infinite subsets $C \subseteq B_n$, $\Phi(C, n)\conv = 1$
\item or for all infinite subsets $C \subseteq B_0$, $\Phi(C, n)\neq 1$.
\end{enumerate}
If the first case holds for some $n$, we can take $G = B_n$ and forbid $n$ from being included in $A$. If the second case holds for every $n$, then we would like to take $G = \bigcap_{n \in \N} B_n$ since, as we noted above, this ensures that for each infinite $C \subseteq G$, $\Phi(C)$ is either not total or computes the empty set. However, there is one problem with this: $\bigcap_{n \in \N}B_n$ could be finite, or even empty.

To deal with this problem, we will construct $G$ using a sequence of Mathias conditions, at each step restricting the reservoir in a manner similar to what we have just described, while also adding some elements to the stem to ensure $G$ is infinite. We will now give the details of the proof. Because of the necessity of using Mathias conditions, of dealing with all Turing functionals (rather than just one), and of ensuring that $A$ has high density, our proof is somewhat more elaborate than the sketch we have just given. However, the key ideas are the same.

\begin{theorem}
\label{thm:ds_nle_rt}
$\DS \nleq_{soc} \RT^1_2$.
\end{theorem}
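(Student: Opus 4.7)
The plan is to directly construct an infinite $G \subseteq \N$ and a set $A \subseteq \N$ of lower density at least $1/2$ such that no infinite subset of $G$ computes any infinite subset of $A$. Once such $G$ and $A$ are in hand, a supposed reduction $\DS \leq_{soc} \RT^1_2$ applied to $A$ would yield an $\RT^1_2$-instance $\tilde A$; at least one of $G \cap \tilde A$ or $G \cap \overline{\tilde A}$ would then be an infinite subset of $G$ computing an infinite subset of $A$, contradicting our construction.

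To build $G$ and $A$, I would follow the outline given just before the theorem, but extend it to cover every Turing functional, using Mathias forcing together with the Galvin-Prikry theorem (playing the role that Seetapun's theorem played in Section~\ref{sec:main}). Concretely, construct a decreasing chain of Mathias conditions $(s_0, B_0) \geq (s_1, B_1) \geq \ldots$, and set $G = \bigcup_e s_e$ and $A = \N \setminus \bigcup_e E_e$ for an increasing chain of finite excluded sets $E_e$. At stage $e$ we process $\Phi_e$: apply Galvin-Prikry to the Borel set $\{C \in [B_e]^\omega : \Phi_e(C)\text{ is total and infinite}\}$ to obtain an infinite homogeneous $B' \subseteq B_e$. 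If the homogeneous color is ``no'', restrict the reservoir to $B'$ and $\Phi_e$ is handled, since every compatible infinite $C$ has $\Phi_e(C)$ not even an infinite set. If the color is ``yes'', apply Galvin-Prikry iteratively to the colorings $\chi_n(C) = [\Phi_e(C, n) \downarrow = 1]$ to locate some $n$ and a further infinite $B'' \subseteq B'$ such that every infinite $C \subseteq B''$ has $n \in \Phi_e(C)$; add $n$ to $E$ and restrict to $B''$, forcing $\Phi_e(C) \not\subseteq A$ for every infinite $C$ compatible with $(s_{e+1}, B_{e+1})$.

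Two technical issues require care. First, we need the conclusion to hold for \emph{all} infinite subsets of $G$, not merely for those compatible with the filter. The cleanest way to secure this is to invoke the heredity of sufficiently generic Mathias sets---an infinite subset of a Mathias generic is itself Mathias generic relative to a restricted filter and so inherits forced properties---or, equivalently, to replace each coloring above by its finite-modification-invariant version $\chi(C) = [\exists t \subseteq \N\text{ finite}: \Phi_e(t \cup C, n) \downarrow = 1]$, whose Galvin-Prikry homogeneity classes are closed under adding or removing finitely many elements of $C$.

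The main obstacle is controlling the density of $E$. Since each stage contributes at most one exclusion, $E$ is countable, but countable sets can have arbitrarily large upper density. The resolution is the combinatorial claim that in the ``yes'' case at stage $e$, the set $F_e$ of permissible ``fixed'' values of $n$ (those admitting a witnessing $B''$) is actually \emph{infinite}. Indeed, were $F_e$ finite, then iterating Galvin-Prikry with the ``no'' homogeneity available at each $n \notin F_e$---and combining this with a Mathias-style stem extension to handle the tail-invariance issue---would produce an infinite $C$ with $\Phi_e(C) \subseteq F_e$, contradicting the ``yes'' case. Since $F_e$ is infinite, we may select the $e$-th excluded value from $F_e \cap [2^e, \infty)$, making $E$ a set of density $0$, comfortably below the $1/2$ threshold. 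The bulk of the technical work lies in reconciling these Galvin-Prikry diagonalizations with the Mathias bookkeeping---keeping reservoirs infinite through possibly infinitely many sub-refinements while simultaneously growing stems to guarantee that $G$ is infinite.
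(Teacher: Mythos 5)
Your high-level plan is the same as the paper's: construct infinite $G$ and $A$ with $A$ of lower density at least $1/2$ so that no infinite subset of $G$ computes an infinite subset of $A$, via a chain of Mathias conditions refined using the Galvin–Prikry theorem. However, two of the technical steps as you describe them do not work, and both failures trace to the same source: the homogeneity you get from Galvin–Prikry lives on the \emph{reservoir}, while the sets you must control are arbitrary infinite subsets of $G$, which mix reservoir elements with stem elements.

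Concretely, consider $\Phi_e$ the identity functional, $\Phi_e(C)=C$. Then for every infinite $B'$, every $n$, and every infinite $B''\subseteq B'$, the set $B''\setminus\{n\}$ is an infinite subset of $B''$ not containing $n$; so your set $F_e$ of ``permissible'' fixed values is \emph{empty}, not infinite. This refutes the combinatorial claim on which your density bound rests. Your proposed proof of that claim (``iterate Galvin–Prikry with 'no' homogeneity at each $n\notin F_e$, combined with a Mathias-style stem extension, to produce $C$ with $\Phi_e(C)\subseteq F_e$'') breaks precisely because the stem extension is not innocent: for the identity functional, each stem element you commit is an element of $C$ and hence of $\Phi_e(C)$, so $\Phi_e(C)$ will never land inside a finite $F_e$. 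The same stem-pollution problem undermines your Technical Issue~1 fixes. The proposed finite-modification-invariant coloring $\chi(C)=[\exists t: \Phi_e(t\cup C,n)\!\downarrow=1]$ is one-sided: homogeneous color $0$ does give you that no finite stem-modification can force $\Phi_e(\cdot,n)=1$, which is fine; but homogeneous color $1$ only gives ``\emph{some} finite $t$ works,'' which tells you nothing about the specific $t$ (a subset of the accumulated stem) that a given $C\subseteq G$ carries. And the genericity-heredity claim (``an infinite subset of a Mathias generic is generic for a restricted filter'') is not true for plain Mathias forcing and isn't a lemma you can invoke here.

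The paper's resolution of both difficulties is to quantify over stems explicitly: at stage $n$ it runs Galvin–Prikry simultaneously for all pairs $(t,m)$ with $t\subseteq s_n$, using the colorings $c_{e,t,m}(B)=[\Phi_e(t\cup B,m)\!\downarrow=1]$. For any infinite $C\subseteq G$, the relevant $t$ is $C\cap s_n$, and $C$ is then genuinely compatible with $(t,B_n)$, so the homogeneity actually applies to $C$. This costs an exponential blowup — up to $2^{|s_n|}$ numbers may be forbidden at stage $n$ — and the density bound is recovered not by picking a single element far out, but by dedicating to the pair $(e,n)$ the entire interval $[2^{e+n+3},2^{e+n+4})$, which has length $2^{e+n+3}\gg 2^n$; summing the resulting density contributions over $e$ gives $\sum_e 2^{-(e+2)}=1/2$. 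If you want to repair your write-up, the key change is to make the $t\subseteq s_n$ quantification explicit and then redo the density accounting with an interval scheme that outpaces the $2^{|s_n|}$ multiplicity; the identity functional is a good sanity check to keep in mind throughout.
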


\begin{proof}
As explained above, we will show how to construct infinite sets $A$ and $G$ such that $A$ has lower density at least $1/2$ and no infinite subset of $G$ computes an infinite subset of $A$. To construct $G$, we will build a sequence of Mathias conditions
\[
  (\0, \N) = (s_0, B_0) \geq (s_1, B_1) \geq (s_2, B_2) \geq \ldots
\]
and take $G = \bigcup_n s_n$. Along the way, we will forbid certain numbers from being included in $A$ and then take $A$ to be the set of all non-forbidden numbers. For convenience, we will ensure that each $s_n$ has size exactly $n$.

For each $e$, we must satisfy the following requirement:
\begin{quote}
\textbf{Requirement $\bm{e}$:} for every infinite $C \subseteq G$, either there is some $m$ such that $\Phi_e(C, m)\conv = 1$ and $m \notin A$ or for all but finitely many $m$, $\Phi_e(C, m) \neq 1$.
\end{quote}
To satisfy requirement $e$, we will take some action at each step $n > e$ of the construction. Roughly speaking, on step $n$ we will try to satisfy the requirement for all $m$ in the interval $[2^{e + n + 3}, 2^{e + n + 4})$.

More precisely, to satisfy requirement $e$, we will ensure that for each step $n > e$ and each $t \subseteq s_n$, either
\begin{enumerate}
\item there is some $m \in [2^{e + n + 3}, 2^{e + n + 4})$ such that for all infinite sets $C$ compatible with $(t, B_n)$, $\Phi(C, m)\conv = 1$
\item or for all $m \in [2^{e + n + 3}, 2^{e + n + 4})$ and infinite sets $C$ compatible with $(t, B_n)$, $\Phi(C, m) \neq 1$.
\end{enumerate}
If the first case holds, we will also pick one such $m$ to forbid from $A$.

\medskip\noindent\textbf{How to pick $\bm{s_{n + 1}}$ and $\bm{B_{n + 1}}$.}
For each $n$, form $s_{n + 1}$ by picking an arbitrary element of $B_n$ to add to $s_n$. Then pick $B_{n + 1}$ as follows.

For each $e < n + 1$, $t \subseteq s_{n + 1}$ and $m \in [2^{e + n + 1 + 3}, 2^{e + n + 1 + 4})$, define a coloring $c_{e, t, m} \colon \powerset(\N) \to \{0, 1\}$ by
\[
  c_{e, t, m}(B) =
  \begin{cases}
    1 &\text{if } \Phi_e(t \cup B, m)\conv = 1\\
    0 &\text{otherwise}.
  \end{cases}
\]
Next, define a coloring $c$ on $\powerset(\N)$ by setting $c(B)$ to be the sequence
\[
  c(B) = \seq{c_{e, t, m}(B) \mid e < n + 1, t \subseteq s_{n + 1}, m \in [2^{e + n + 1 + 3}, 2^{e + n + 1 + 4})}.
\]
Note that $c$ has finite range and can easily be seen to be Borel. Thus by the corollary to the Galvin-Prikry theorem, we can choose $B_{n + 1}$ to be an infinite subset of $B_n$ such that $c$ assigns the same value to all infinite subsets of $B_{n + 1}$. It is straightforward to check that $B_{n + 1}$ has the properties desired.

Finally, recall that we must forbid some elements from $A$. For each $e < n + 1$ and $t \subseteq s_{n + 1}$, if there is any $m$ in the interval $[2^{e + n + 1 + 3}, 2^{e + n + 1 + 4})$ such that $c_{e, t, m}$ has constant value $1$ on $B_{n + 1}$ then pick the least such $m$ and forbid it from $A$. Also recall that at the end of the construction, we take $A$ to consist of all numbers not forbidden from $A$. 

This concludes the construction of $G$ and $A$. All that remains now is to check that they have the necessary properties.

\medskip\noindent\textbf{The set $\bm{G}$ satisfies all requirements.}
Fix any $e$ and we will show that requirement $e$ is satisfied. Suppose $C$ is an infinite subset of $G$. We will show that either $\Phi(C)$ is not a subset of $A$ or it is not infinite.

For each $n$, define $t_n = C\cap s_n$ and note that since $G \subseteq s_n\cup B_n$, $C$ is compatible with $(t_n, B_n)$. We know that for each $n > e$, one of two possibilities holds: 
\begin{enumerate}
    \item there is some $m \in [2^{e + n + 3}, 2^{e + n + 4})$ such that $m$ is forbidden from $A$ and for all infinite sets $D$ compatible with $(t_n, B_n)$, $\Phi(D, m) \conv = 1$
    \item or for every $m \in [2^{e + n + 3}, 2^{e + n + 4})$ and every infinite set $D$ compatible with $(t_n, B_n)$, $\Phi(D, m) \neq 1$.
\end{enumerate}

On the one hand, if the first possibility holds for any $n$ then there is some $m$ such that $\Phi(C, m)\conv = 1$ and $m$ is forbidden from $A$, which implies that $\Phi(C)$ is not a subset of $A$.

On the other hand, if the second possibility always holds then for every $n > e$ and every $m$ in the interval $[2^{e + n + 3}, 2^{e + n + 4})$, $\Phi(C, m) \neq 1$. Since the union of these intervals consists of all numbers greater than or equal to $2^{2e + 4}$, this shows that either $\Phi(C)$ is not total or it only contains numbers less than $2^{2e + 4}$ and thus is not infinite.

\medskip\noindent\textbf{The set $\bm{A}$ has high density.}
For each $e$, let $A_e$ denote the set of numbers forbidden from $A$ on behalf of requirement $e$. Note that $\bar{A} = \bigcup_e A_e$. We will show that each $A_e$ never has density greater than $1/2^{e + 2}$ and thus that their union never has density more than
\[
  \sum_{e \in \N} \frac{1}{2^{e + 2}} = \frac{1}{2}.
\]
This implies that $\bar{A}$ never has density more than $1/2$ and thus that $A$ is $1/2$-dense. Note, by the way, that when we say that each $A_e$ never has density greater than $1/2^{e + 2}$, we mean that for each $n$, $A_e$ is at most $1/2^{e + 2}$-dense at $n$ (in the sense of Section~\ref{sec:prelim_density}); it is not good enough for it to just have upper density at most $1/2^{e + 2}$.

So fix $e$ and we will argue that $A_e$ never has density more than $1/2^{e + 2}$. First consider a single interval of the form $[2^{e + n + 3}, 2^{e + n + 4})$ for $n > e$. The only time numbers from this interval will be added to $A_e$ is on step $n$ of the construction and on that step, at most one number will be added to $A_e$ per subset of $s_n$. Since $s_n$ has size exactly $n$, this means at most $2^n$ such numbers will be added. Thus we have
\[
  |A_e \cap [2^{e + n + 3}, 2^{e + n + 4})| \leq 2^n = \frac{1}{2^{e + 3}}\cdot |[2^{e + n + 3}, 2^{e + n + 4})|.
\]
In other words, in the interval $[2^{e + n + 3}, 2^{e + n + 4})$, $A_e$ has density at most $1/2^{e + 3}$.

From this fact it can easily be shown that at each power of $2$, $A_e$ has density at most $1/2^{e + 3}$, i.e.\ for each $m > 0$,
\[
  |A_e\cap [2^m - 1]| \leq \frac{2^m}{2^{e + 3}}.
\]
It remains to check that the density of $A_e$ is high enough in-between powers of $2$. But this follows easily from what we have already established. If $2^m \leq k < 2^{m + 1}$ then we have
\begin{align*}
  |A_e \cap [k]| &\leq |A_e\cap [2^{m + 1} - 1]|\\
                 &\leq \frac{2^{m + 1}}{2^{e + 3}} = \frac{2^m}{2^{e + 2}} \leq \frac{k + 1}{2^{e + 2}}
\end{align*}
and thus $A_e$ is at most $1/2^{e + 2}$-dense at $k$, as promised.
\end{proof}

The proof of Theorem~\ref{thm:ds_nle_rt} actually yields a somewhat stronger result. Let $\RT^1_{<\infty}$ denote the problem in which an instance is a finite partition $A_1,\ldots, A_n$ of $\N$ and a solution is an infinite set $B$ which is a subset of some $A_i$. Roughly speaking, $\RT^1_{<\infty}$ is the problem corresponding to Corollary~\ref{cor:seetapun2} of Seetapun's theorem and thus the corollary below shows that there is no direct proof of our main theorem from Corollary~\ref{cor:seetapun2} in the same sense that the theorem above showed there is no direct proof of our main theorem from Seetapun's theorem.

\begin{corollary}
$\DS \nleq_{soc} \RT^1_{< \infty}$.
\end{corollary}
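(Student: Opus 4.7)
The plan is to observe that the construction in the proof of Theorem~\ref{thm:ds_nle_rt} already produces sets with a strong enough property to rule out a reduction to $\RT^1_{<\infty}$ as well, so essentially no new work is needed beyond a pigeonhole argument on the solution side.

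More concretely, let $A$ and $G$ be the sets produced in the proof of Theorem~\ref{thm:ds_nle_rt}. That proof establishes two facts about $A$ and $G$ that do not mention $\RT^1_2$ at all: first, $A$ has lower density at least $1/2$; second, for every infinite set $C \subseteq G$ and every Turing functional $\Phi$, $\Phi(C)$ fails to be an infinite subset of $A$. In other words, no infinite subset of $G$ computes an infinite subset of $A$. This is exactly the property we will exploit.

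Suppose for contradiction that $\DS \leq_{soc} \RT^1_{<\infty}$. View $A$ as an instance of $\DS$. By the reduction there is a finite partition $B_1, \ldots, B_n$ of $\N$ (an instance of $\RT^1_{<\infty}$) such that for every solution $\tilde{C}$ to this instance of $\RT^1_{<\infty}$ -- i.e., every infinite set $\tilde{C}$ contained in some $B_i$ -- some set $C \leq_T \tilde{C}$ is an infinite subset of $A$. Because $G$ is infinite and $B_1, \ldots, B_n$ partition $\N$, by the pigeonhole principle at least one of the sets $G \cap B_i$ is infinite. Fix such an $i$ and let $\tilde{C} = G \cap B_i$. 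Then $\tilde{C}$ is an infinite subset of $B_i$, so it computes some infinite subset of $A$; but $\tilde{C}$ is also an infinite subset of $G$, contradicting the defining property of $G$.

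There is no real obstacle here: the construction in the proof of Theorem~\ref{thm:ds_nle_rt} was designed around the Galvin--Prikry theorem, whose conclusion handles Borel colorings with any finite range, so the constructed $G$ already has the strong property that no infinite subset of it computes any infinite subset of $A$. The only change from the $\RT^1_2$ case is that we now use finite-partition pigeonhole rather than splitting into $B$ and $\bar{B}$. So the corollary follows immediately once one notices that the previous proof's output is ``reduction-agnostic.''
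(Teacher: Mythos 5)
Your proof is correct and follows exactly the same route as the paper's: reuse the $A$ and $G$ from the proof of Theorem~\ref{thm:ds_nle_rt}, apply pigeonhole to find some $i$ with $G \cap B_i$ infinite, and derive the contradiction from the fact that no infinite subset of $G$ computes an infinite subset of $A$. Nothing to add.
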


\begin{proof}
Let $A$ and $G$ be as in the proof of Theorem~\ref{thm:ds_nle_rt} and suppose for contradiction that $\DS \leq_{soc} \RT^1_{< \infty}$. Then, thinking of $A$ as an instance of $\DS$, we get a partition $B_1,\ldots,B_n$ of $\N$ such that for any $B_i$ and any infinite subset $C \subseteq B_i$, $C$ computes an infinite subset of $A$. However, since $G$ is infinite, there must be some $B_i$ such that $G\cap B_i$ is infinite. Since $G\cap B_i$ is an infinite subset of $G$, it does not compute any infinite subset of $A$, which gives a contradiction.
\end{proof}

\bibliographystyle{plain}
\bibliography{dense_coding}

\end{document}